\documentclass[12pt]{amsart}
\usepackage[english]{babel}

\usepackage{pifont}
\usepackage{todonotes}

\usepackage{geometry}
\usepackage{amsthm,amsmath,amssymb,amsfonts,amscd,mathtools}
\usepackage[shortlabels]{enumitem}
\usepackage{hyperref,cleveref}
\usepackage{multirow,float}

\theoremstyle{plain}
\newtheorem*{maintheorem}{Main Theorem}
\newtheorem{theorem}{Theorem}[section]

\newtheorem{lemma}[theorem]{Lemma}
\newtheorem{proposition}[theorem]{Proposition}
\newtheorem{conjecture}[theorem]{Conjecture}

\newtheorem{corollary}[theorem]{Corollary}

\theoremstyle{definition}
\newtheorem{definition}[theorem]{Definition}
\newtheorem{remark}[theorem]{Remark}
\newtheorem{notation}[theorem]{Notation}

\theoremstyle{remark}
\newtheorem{example}[theorem]{Example}

\numberwithin{equation}{section}
\numberwithin{figure}{section}
\numberwithin{table}{figure}

\newcommand{\pt}[1]{\left({#1}\right)}
\newcommand{\pq}[1]{\left[{#1}\right]}

\newcommand{\rest}[2]{\left.{#1}\right|_{#2}}
\newcommand{\pg}[1]{\left\{{#1}\right\}}

\newcommand{\vol}{\operatorname{Vol}}

\newcommand{\Z}{\mathbb{Z}}

\newcommand{\R}{\mathbb{R}}

\newcommand{\C}{\mathbb{C}}

\DeclareMathOperator{\rk}{rank}

\newcommand{\g}{\mathfrak{g}}
\newcommand{\h}{\mathfrak{h}}
\newcommand{\kk}{\mathfrak{k}}
\newcommand{\q}{\mathfrak{q}}
\newcommand{\ab}{\mathfrak{a}}

\newcommand{\sll}{\mathfrak{sl}}

\title[]{$p$-Kähler structures on nilmanifolds and holomorphically parallelizable manifolds}

\author{Ettore Lo Giudice}
\address[Ettore Lo Giudice]{
Dipartimento di Scienze Matematiche, Fisiche e Informatiche
Unit\`a di Matematica e Informatica\\
Universit\`a degli Studi di Parma\\
Parco Area delle Scienze 53/A, 43124\\
Parma, Italy \& \\
Dipartimento di Matematica e Informatica \\
Universita degli Studi di Ferrara \\
Via Machiavelli 35, 44121 Ferrara, Italy}
\email{ettore.logiudice@unipr.it, ettore.logiudice@unife.it}

\author{Asia Mainenti}
\address[A. Mainenti]{}
\email{asia.mainenti@imar.ro, asia.mainenti@gmail.com}

\thanks{}
\keywords{holomorphically parallelizable manifolds, nilmanifolds, reductive Lie groups, p-K\"ahler structures, positive $(p,p)$-forms}
\subjclass[2020]{Primary: 53C55; Secondary: 22E25, 22E46}

\begin{document}

\begin{abstract}
We prove the Alessandrini-Bassanelli conjecture on nilmanifolds with nilpotent complex structures and on holomorphically parallelizable solvmanifolds.
As a consequence, we classify holomorphically parallelizable solvmanifolds admitting $p$-K\"ahler structures, for lower values of $p$.
We further provide new examples of $(n-2)$-Kähler manifolds, in the setting of compact holomorphically parallelizable manifolds with reductive universal cover.
\end{abstract}

\maketitle


\section{Introduction}

This paper is set in the context of $p$-K\"ahler manifolds, namely complex manifolds endowed with $(p,p)$-forms which are closed and satisfy a pointwise positivity condition known as \textit{transversality}.
On each tangent space, this condition requires that the restriction of the form to every $p$-dimensional complex subspace is a positive volume form.
A complex manifold is $1$-K\"ahler if and only if it is K\"ahler, and if $n$ is the complex dimension, it is $(n-1)$-K\"ahler if and only if it admits a balanced metric. 
However, these are the only cases where these structures give rise to Hermitian metrics on the underlying manifold, satisfying a differential condition.

We note that recently $(n-2)$-K\"ahler structures were shown to be related to generalizations of the non-abelian Hodge correspondence, in the context of non-K\"ahler Hermitian geometry, see \cite{FM26} for more details.

\smallskip
One of the main open questions in this setting is the following:

\begin{conjecture}[Alessandrini-Bassanelli conjecture {\cite{AB}}]\label{ABconj}
    If a complex manifold $(M,J)$ admits a $p$-K\"ahler structure, then it admits $q$-K\"ahler structures for all $q\ge p.$
\end{conjecture}

Note that, should the conjecture  be true, it would restrict $p$-K\"ahler manifolds to the class of balanced manifolds.
So far the conjecture is only known to hold on a class of holomorphically parallelizable nilmanifolds (by \cite{Log2025}), and on all the explicit examples of manifolds admitting $p$-K\"ahler structures.
The main result of the paper is that the conjecture is true on some more general classes of compact quotients of Lie groups by lattices.

\begin{maintheorem}
    The Alessandrini-Bassanelli conjecture holds on $(M,J)$ satisfying one of the following:
    \begin{enumerate}
        \item\label{MTnilp} $M$ is a nilmanifold, and $J$ is nilpotent;
        \item\label{MTsolv} $(M,J)$ is a holomorphically parallelizable solvmanifold.
    \end{enumerate}
\end{maintheorem}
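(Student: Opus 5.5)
The plan is to reduce everything to linear algebra on the Lie algebra, and then show that a non-$(p+1)$-K\"ahler obstruction can be pushed down to a non-$p$-K\"ahler obstruction.

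\textbf{Reduction to invariant forms.} Let $G$ be the simply connected Lie group and $\Gamma$ the lattice. Since nilpotent Lie groups, and the solvable groups underlying holomorphically parallelizable solvmanifolds, are unimodular, a symmetrization argument is available. Averaging a $p$-K\"ahler form against the bi-invariant volume on $G/\Gamma$ gives a left-invariant closed $(p,p)$-form, and transversality is preserved because the transverse cone is convex and closed. So $M$ is $p$-K\"ahler if and only if $\mathfrak g$ carries a closed transverse $(p,p)$-form in $\Lambda^{p,p}\mathfrak g^*$.

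\textbf{Duality on the Lie algebra.} By the Hahn--Banach/Sullivan-type characterization (Alessandrini--Bassanelli), the invariant problem is equivalent to a statement about currents. No closed transverse $(q,q)$-form exists if and only if there is a nonzero weakly positive $(n-q,n-q)$-form (an invariant ``current'') that is $d$-exact, i.e.\ the $(n-q,n-q)$-component of $d\eta$ for some invariant $\eta$. Assume $M$ is $p$-K\"ahler but not $q$-K\"ahler for some $q>p$. Then there is a nonzero positive exact invariant form $T=\partial\bar\partial$-type/$d$-exact of bidegree $(n-q,n-q)$.

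\textbf{The key step, and the main obstacle: lowering the bidegree.} We must produce a positive, nonzero, exact invariant form of bidegree $(n-p,n-p)$, contradicting $p$-K\"ahlerness. The natural attempt is to set $T'=T\wedge\omega^{q-p}$ for a suitable positive invariant $(1,1)$-form $\omega$ built from a coframe adapted to the structure.

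In case (1), choose a basis $\{\varphi^i\}$ of $(1,0)$-forms with $d\varphi^i\in\Lambda^2\langle\varphi^1,\dots,\varphi^{i-1},\bar\varphi^1,\dots,\bar\varphi^{i-1}\rangle$, which is possible by nilpotency of $J$ (Cordero--Fern\'andez--Gray--Ugarte). Then take $\omega$ to be a sum of $i\varphi^j\wedge\bar\varphi^j$ for $j$ in the \emph{first} (closed) indices. Since $d\varphi^1=0$ and the filtration is triangular, I expect one can arrange $\omega$ closed, or at least $d\omega\wedge T$ to vanish on the relevant components, so that $T\wedge\omega^{q-p}$ remains exact. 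Positivity is automatic because the product of a strongly positive form with a positive form is positive. The delicate point is nonvanishing: $T\wedge\omega^{q-p}$ could be zero. Here I would use that a positive exact $T$ must ``live'' on the first-step directions, i.e.\ $T$ has a nonzero component containing $i\varphi^1\wedge\bar\varphi^1$-free directions complementary to closed ones. To make this precise I would induct on the length of the filtration, passing to the quotient by the center, which is a lower-dimensional nilmanifold with nilpotent structure.

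In case (2), $\mathfrak g$ is a complex Lie algebra and invariant $(1,0)$-forms are holomorphic. Use the complex structure constants: $d\varphi^k=-\sum c^k_{ij}\varphi^i\wedge\varphi^j$, so $d$ of a $(p,p)$-form splits into $\partial$ on the holomorphic part and $\bar\partial$ on the conjugate. The exact positive currents are then sums of terms $\alpha\wedge\bar\alpha$ with $\alpha\in d\Lambda^{\bullet,0}$. I would express both $p$- and $q$-K\"ahlerness as conditions on $[\mathfrak g,\mathfrak g]$ versus $\mathfrak g$ via derived-series dimensions, generalizing \cite{Log2025}: roughly, $p$-K\"ahler holds iff no decomposable $(n-p)$-vector in $\Lambda^{n-p}\mathfrak g$ lies in the image of the dual of the bracket. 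Monotonicity in $p$ then follows by wedging such $(n-q)$-vectors with vectors from $[\mathfrak g,\mathfrak g]$. For solvable $\mathfrak g$, I would reduce to the nilradical using the abelian quotient.
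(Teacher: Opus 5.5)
There is a genuine gap. The reduction to invariant forms matches the paper (Lemma \ref{simmetrizzazioni}), and wedging a Hahn--Banach obstruction $T$ with $\omega^{q-p}$ is just the dual of the paper's construction $\widetilde\Omega=\Omega\wedge i\sum_{j=1}^{\tilde p}\varphi^j\wedge\overline{\varphi^j}$. But the step you call ``delicate'' is the whole content of the theorem, and you do not supply it.

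For $T\wedge\omega^{q-p}$ to obstruct $p$-K\"ahler structures, $\omega$ must be closed. For it to be non-zero, $\omega$ needs enough directions: if $T=i^{(n-q)^2}\psi\wedge\overline\psi$ with $\psi$ decomposable, then $T\wedge\omega^{q-p}\neq0$ if and only if $\psi$ together with the $\varphi^j$ spans a space of dimension at least $n-p$. With your choice of $\omega$, what you need for every $T$ is at least $n-p$ linearly independent closed $(1,0)$-forms, exactly the condition $k\ge n-p$ in Lemma \ref{ambrozia}. Nothing in your proposal produces them. ``The first (closed) indices'' may consist of $\varphi^1$ alone, and the claim that an exact positive $T$ must ``live on first-step directions'' is unjustified. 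Crucially, your argument never uses the hypothesis that $M$ is $p$-K\"ahler, which is the only possible source of such closed forms. Case (2) has the same issue. Wedging an exact decomposable $(n-q,0)$-form with closed $(1,0)$-forms (those annihilating $[\mathfrak g,\mathfrak g]$, not vectors in it) gives a non-zero exact decomposable $(n-p,0)$-form only if there are enough closed forms transverse to it. For semisimple $\mathfrak g$ there are none at all.

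The paper fills this gap with two separate arguments. In case (1) it inducts on dimension. By Proposition \ref{bext}, $\mathfrak g$ is an $\mathfrak a$-extension of a $(p-1)$-K\"ahler $\mathfrak k$ with nilpotent complex structure. By the inductive hypothesis $\mathfrak k$ is $q$-K\"ahler for all $q\ge p-1$. Only then can the Sferruzza--Tardini obstruction (Proposition \ref{STthm}) be iterated, as in Lemma \ref{nonsisa}, to give an adapted basis with $d\varphi^j=0$ for $j\le\tilde p+1$. Lemma \ref{nonsisa} already presupposes the conjecture on $\mathfrak k$, so the induction is not optional.

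In case (2) the paper uses Lemma \ref{tanti nulli}. In a Nakamura basis \eqref{Nakamura basis}, the vanishing of exact decomposable $(\tilde p,0)$-forms such as $d(\varphi^{1\cdots\hat{u}\cdots\hat{v}\cdots\tilde{p}+1})$ forces $\eta_j=\xi_j=0$ for $j\le\tilde p+1$, and then $d\varphi^{\tilde p+2}=0$. Your sketch via derived-series dimensions and reduction to the nilradical contains neither ingredient.

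A smaller point: your duality statement should use strongly positive $(n-q,n-q)$-forms that are the $(n-q,n-q)$-components of exact forms, since the dual of the transverse cone is the strongly positive cone. A merely weakly positive or positive $T$, or $T\wedge\omega^{q-p}$, contradicts nothing. You do get strong positivity if you track it, since $T$ and $\omega$ are both strongly positive.
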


Whereas compact holomorphically parallelizable manifolds are always balanced, there are examples of nilmanifolds with nilpotent complex structures which are not balanced, for instance every nilmanifold admitting pluriclosed metrics.
Thus, item \ref{MTnilp} in the Main Theorem is indeed the first known evidence that the existence of $p$-K\"ahler structures does imply the existence of balanced metrics, in some special setting.

A consequence of the Main Theorem is that holomorphically parallelizable solvmanifolds cannot be $p$-K\"ahler, for $p$ less than or equal to half the complex dimension of the manifold (see Theorem \ref{n2Solv}).
Furthermore, this bound on $p$ is sharp (see Remark \ref{rmkn2}), and we can characterize which holomorphically parallelizable solvmanifolds admit $p$-K\"ahler structures for the lowest possible $p$, in Proposition \ref{lowestp}.

\smallskip
Still in the context of the Alessandrini-Bassanelli conjecture, further evidence for its validity may come from investigating the behaviour of powers of $p$-K\"ahler structures. 
Since powers of closed forms are trivially closed, the question reduces entirely to the positivity of the resulting form.
Motivated by analogous results for a weaker positivity notion, in \cite{BlockiPlis}, we prove that this problem depends on the bidegree.
Indeed, we show that the square of a transverse $(2,2)$-form is always transverse (Corollary \ref{corSquare2}), whereas there are examples of transverse $(p,p)$-forms whose square is not transverse, for every $p\ge3$ (Corollary \ref{corSquare3}).
As far as  $p$-K\"ahler structures are concerned, the latter result implies that powers of $p$-K\"ahler structures may not be $2p$-K\"ahler, for $p\ge3.$
However, the question remains open, because it is not known whether there exist complex manifolds $M$ such that the aforementioned transverse $(p,p)$-forms, whose squares are not transverse, are closed on $M$.

\smallskip
Another critical matter in the setting of $p$-K\"ahler geometry is that only a handful of examples are known, and they all arise either as modifications of compact K\"ahler manifolds, or on nilmanifolds endowed with a special class of complex structures.

A natural way to obtain new examples is by taking products of pairs of $p$-K\"ahler manifolds. However, it is not straightforward to determine whether the resulting product is $q$-K\"ahler for some $q$. In \cite{AB}, the authors addressed this problem under the assumption that the Alessandrini–Bassanelli conjecture holds on both factors, together with additional positivity conditions. 
In Theorem \ref{superprodotti}, we show that, under the assumptions that the factors are balanced and, respectively, $(n-2)$-K\"ahler and $(m-2)$-K\"ahler, where $n,m$ are the complex dimensions, then the product is $(m+m-2)$-K\"ahler, without needing the extra positivity assumptions.

We also provide a new family of examples, proving that compact quotients of reductive complex Lie groups by lattices are $(n-2)$-K\"ahler if and only if the reductive Lie group has no simple factor of rank $1$ (see Theorem \ref{P-K su reductive}), where $n$ is the complex dimension.
As noted above, these manifolds are always balanced as they are holomorphically parallelizable.
On the other hand, a similar result cannot hold in full generality on reductive Lie groups, as for instance most compact Lie groups with invariant complex structures cannot have $(n-2)$-K\"ahler structures, see \cite[Theorem 4.6]{FGM}.

\smallskip
In \cite{ABcurrents} the question of whether a $p$-K\"ahler structure can be exact is raised. The problem was studied for balanced metrics in \cite{yachou}, where the author shows that, on a compact quotient of a complex semisimple Lie group by a lattice, every invariant balanced metric is $d$-exact,
and in \cite{ouv,PiovTom}, where the authors proved that on $\mathfrak{sl}(2,\C)$ endowed with a biinvariant complex structure, the second power of the diagonal Hermitian metric has vanishing class in the Bott-Chern cohomology. Here, we show that all the aforementioned examples obtained as quotients of semisimple complex Lie groups admit a $(n-2)$-K\"ahler structure with vanishing Bott-Chern cohomology class. 

\smallskip
The paper is organized as follows. 
In Section \ref{secPrel} we recall some notions of positivity of exterior forms on complex vector spaces, and the main known properties of $p$-K\"ahler manifolds.

Section \ref{secPos} is dedicated to the study of positive forms on  complex vector spaces.
More precisely, we study the positivity of squares of transverse forms, and some further results showing how to construct transverse forms of type $(p+1,p+1)$, starting from a preferred one, of type $(p,p)$, see Lemma \ref{ambrozia}.
The latter result will play an important role in the proofs of the Alessandrini-Bassanelli conjecture.

In Section \ref{secCpxMfd} we first consider the general setting of compact complex manifold, first proving the aforementioned result on products of $p$-K\"ahler manifolds.
We later focus on compact quotients of Lie group by lattices, with a special type of left-invariant complex structures, called \textit{quasi-nilpotent}. 
We show how the existence of $p$-K\"ahler structures affects suitable quotients of the associated Lie algebras, see Proposition \ref{enrico}, and prove that in some special cases the Alessandrini-Bassanelli partially holds, see Proposition \ref{Lie algebra con particolari ipotesi}.
We conclude the section considering nilmanifolds with nilpotent complex structures, ultimately proving item \ref{MTnilp} of the Main Theorem, in Theorem \ref{polis}.

Lastly, in Section \ref{secHolPar} we specialize to compact holomorphically parallelizable manifolds.
In the solvable case, we prove item \ref{MTsolv} of the Main Theorem, see Theorem \ref{Disney upppp}, and the corollaries discussed above.
The last part of the paper treats the reductive case.

\section{Preliminaries}\label{secPrel}
In this section, we discuss the different notions of positivity for differential forms and some properties related to the existence of $p$-K\"ahler structures on compact complex manifolds. 

Since positivity of differential forms is a pointwise notion, in the following we focus on exterior forms on vector spaces.
Let $V$ be a complex vector space of dimension $n$, denote by $\Lambda^{p,q} \coloneqq \Lambda^{p,q} V^*\subset\Lambda^{p+q}V^*_\C$ the space of $(p,q)$-forms over $V$.
In our notation, a volume form is a non-zero $(n,n)$-form, and to discuss positivity we will fix a preferred one, $\vol\in\Lambda^{n,n}\setminus\pg0$. 
We will say that a top degree form $\tau \in \Lambda^{n,n}$ is \textit{strictly positive} (respectively, \textit{positive}) if it is a positive (respectively, non negative) multiple of $\vol$.
With this notation, a volume form is positive if and only if it is strictly positive.

The notation used in the following is mostly based on \cite{hk}.
Recall that $\psi \in \Lambda^{p,0}$ is called decomposable if $\psi = \psi^{1} \wedge \dots \wedge \psi^{p}$, where $\psi^{1},\dots,\psi^p$ are $(1,0)$-forms.

\begin{definition}
    A $(p,p)$-form $\Omega$ is called \textit{strongly positive} if it can be written as 
    \begin{equation*}
        \Omega = i^{p^{2}} \sum_{j\in I} \psi_{j} \wedge \overline{\psi_{j}}=\sum_{j\in I} \pt{i\,\psi_{j}^1 \wedge \overline{\psi_{j}^1}}\wedge\dots\wedge\pt{i\,\psi_{j}^p \wedge \overline{\psi_{j}^p }}
    \end{equation*}
    where $\psi_{j} = \psi_j^{1} \wedge \dots \wedge \psi_j^{p}$ is a decomposable $(p,0)$-form, for all $j\in I$.
    The set of strongly positive $(p,p)$-forms is a convex cone, and the forms in the interior are called \textit{strictly strongly positive}. 
\end{definition}

\begin{notation}
    Throughout this paper we will adopt the notation $\tilde p \coloneqq n - p$, where $n$ denotes the complex dimension of the manifold. 
\end{notation}

Weaker conditions can be defined as follows. 

\begin{definition}
    A real $(p,p)$-form $\Omega$ is \textit{strictly positive} (respectively, \textit{positive}) if
    \begin{equation*}
        i^{\tilde p^2} \Omega \wedge \eta \wedge \overline{\eta}
    \end{equation*}
    is strictly positive (respectively, positive), for all $0 \neq \eta \in \Lambda^{\tilde p,0}$.

Furthermore, $\Omega$ is called \textit{strictly weakly positive} or \textit{transverse} (respectively, \textit{weakly positive}) if 
    \begin{equation*}
        i^{\tilde p^2} \Omega \wedge \eta \wedge \overline{\eta}
    \end{equation*}
    is strictly positive (respectively, positive), for all $0 \neq\eta \in \Lambda^{\tilde p,0}$ decomposable.
\end{definition}
For the sake of simplicity, in the following we will use the terminology of transversality, originating from \cite{sullivan}.
Note that, as for strong positivity, the sets of positive and weakly positive forms are convex cones, and the strict versions are the respective interiors.
For $p=1,n-1$ the three cones coincide. 

\begin{remark}
Transverse $(p,p)$-forms are characterized by being strictly positive forms when restricted to complex vector subspaces of complex dimension $p$ (see \cite[Criterion III.1.6]{demailly}).
\end{remark}

The discussion above allows us to introduce $p$-K\"ahler structures on complex manifolds.
Every complex manifold is orientable, and thus admits a nowhere vanishing volume form, which is enough to define positivity for differential forms, by requiring the aforementioned conditions to hold at every point.

\begin{definition}
    Let $(M,J)$ be a complex manifold of complex dimension $n$. Let $\Omega$ be a real $(p,p)$-form, where $1 \leq p \leq n$. We call $\Omega$ a $p$-K\"ahler structure if $d\Omega = 0$ and $\Omega$ is pointwise transverse. 
\end{definition}

\begin{remark}
As noted above, for $p=1,n-1$ being transverse is equivalent to being strictly strongly positive, so that $1$-K\"ahler structures coincide with K\"ahler metrics, whereas $(n-1)$-K\"ahler structures coincide with $(n-1)$-th powers of balanced metrics (see for instance \cite{michelsohn}). 
\end{remark}

The existence of $p$-K\"ahler structures can be characterized in terms of currents, \cite[Theorem 1.17]{AA}.
A particular case of this characterization is the following obstruction result.

\begin{proposition}[{\cite[Proposition 3.4]{hmt21}}]\label{OstrHMT}
    Let $(M,J)$ be a compact complex manifold of complex dimension $n$. Suppose that there exists a $(2 \tilde p-1)$-form $\alpha$ such that 
    \begin{equation*}
        0 \neq (d \alpha)^{\tilde p, \tilde p} = \sum_{j}c_{j} \psi^{j} \wedge \overline{\psi^{j}},
    \end{equation*}
    where $\psi^{j}$ is a decomposable $(\tilde p,0)$-form and $c_{j}$ have the same sign. Then, there are no $p$-K\"ahler structures on $(M,J)$.
\end{proposition}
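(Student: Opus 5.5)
The argument is by contradiction, using Stokes' theorem to pair a hypothetical $p$-K\"ahler structure against $d\alpha$. Suppose $\Omega$ is a $p$-K\"ahler structure on $(M,J)$, so $\Omega$ is a real closed $(p,p)$-form that is pointwise transverse. Since $M$ is compact without boundary and $d\Omega=0$, Stokes' theorem gives
\begin{equation*}
    0=\int_M d\pt{\Omega\wedge\alpha}=\int_M \Omega\wedge d\alpha .
\end{equation*}
We may assume $\alpha$ is real; otherwise we replace it by its real or imaginary part, one of which still has nonzero $(\tilde p,\tilde p)$-component of the required form. In fact, if the $c_j$ are real, $(d\alpha)^{\tilde p,\tilde p}$ is, up to a power of $i$, real, and we normalise so that the integrand below is real.

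Next, bidegree reasons show that only the $(\tilde p,\tilde p)$-component of $d\alpha$ contributes to the integral. Indeed, $\Omega$ is of type $(p,p)$, and top-degree forms on $M$ are of type $(n,n)$, so
\begin{equation*}
    0=\int_M \Omega\wedge (d\alpha)^{\tilde p,\tilde p}=\sum_j \int_M c_j\, \Omega\wedge\psi^j\wedge\overline{\psi^j}.
\end{equation*}

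Now we use transversality, with the sign convention $i^{\tilde p^2}$ built into the $c_j$ (equivalently, we absorb $i^{\tilde p^2}$ into the coefficients, as the statement implicitly does). Pointwise, each term $i^{\tilde p^2}\Omega\wedge\psi^j\wedge\overline{\psi^j}$ is a nonnegative multiple of the volume form, and it is strictly positive wherever $\psi^j\neq 0$, because $\psi^j$ is decomposable. Since all $c_j$ have the same sign, say $c_j\ge 0$, the integrand is a pointwise nonnegative top form. Its integral vanishes, so it vanishes identically. Strict positivity then forces $c_j\psi^j\wedge\overline{\psi^j}=0$ at every point for every $j$, hence $(d\alpha)^{\tilde p,\tilde p}=0$, which contradicts the hypothesis.

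The main obstacle is the sign bookkeeping. We have to check that the sum of terms with same-sign coefficients, each strictly positive where it is nonzero, cannot cancel. This holds because each summand is separately a nonnegative multiple of $\vol$, so no cancellation can occur. A second point is treating the $c_j$ as functions when the decomposition holds only locally. Here we would argue pointwise: at each point $x$ the value $(d\alpha)^{\tilde p,\tilde p}_x$ is a same-sign combination of forms of the type $\psi\wedge\overline\psi$, so $\Omega_x\wedge (d\alpha)_x^{\tilde p,\tilde p}$ is a nonnegative multiple of $\vol_x$, and it is strictly positive wherever $(d\alpha)^{\tilde p,\tilde p}_x\neq0$. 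The result then follows from the vanishing of the integral.
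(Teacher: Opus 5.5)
Your proof is correct and follows the standard argument the paper relies on: the paper cites \cite{hmt21} for this result and notes in Remark~\ref{remark di unimodularita} that the key step is Stokes' theorem. You pair a hypothetical $p$-K\"ahler form $\Omega$ with $d\alpha$, keep only the $(\tilde p,\tilde p)$-component by bidegree, and use transversality on the decomposable $\psi^j$ to obtain a sign-definite integrand with zero integral. The reduction to real $\alpha$ is unnecessary: $\int_M\Omega\wedge d\alpha=0$ holds for complex $\alpha$ as well, and $\Omega\wedge(d\alpha)^{\tilde p,\tilde p}=i^{-\tilde p^2}\sum_j c_j\,\big(i^{\tilde p^2}\Omega\wedge\psi^j\wedge\overline{\psi^j}\big)$, where the common constant factor $i^{-\tilde p^2}$ does not affect the conclusion.
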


A special instance, which will be useful later on, can be stated as follows:

\begin{remark}[{\cite[Remark 2.3]{AB}}]\label{86volte}
On a compact  $p$-K\"ahler manifold  $M$, there exist no non-zero exact decomposable holomorphic $\tilde p$-forms.
\end{remark}

\subsection{\texorpdfstring{$p$}{p}-K\"ahler structures on compact quotients of Lie groups}
In this section, we recall some results concerning the existence of $p$-K\"ahler structures on compact quotients of Lie groups. 
More precisely, we focus on compact complex manifolds $(M,J)$ such that $M = \Gamma \backslash G$, where $G$ is a simply connected Lie group of real dimension $2n$, $\Gamma \subseteq G$ is a discrete, co-compact subgroup and $J$ is a left-invariant complex structure. 

By a symmetrization argument, the existence of a $p$-K\"ahler structure on $(\Gamma \backslash G,J)$ implies the existence of a $p$-K\"ahler structure on the Lie algebra $\g$ of $G$.

\begin{lemma}[{\cite{FG,fm}}]\label{simmetrizzazioni}
    If $(\Gamma \backslash G,J)$ as above admits a $p$-K\"ahler structure, for some $1\le p\le n$, then $(\g,J)$ admits a $p$-K\"ahler structure. 
\end{lemma}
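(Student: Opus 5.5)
The plan is the standard symmetrization (averaging) argument of Belgun type. Since $G$ admits the lattice $\Gamma$, it is unimodular, so a bi-invariant Haar measure exists. It descends to a finite right-invariant measure $\mu$ on $M=\Gamma\backslash G$, which we normalize so that $\mu(M)=1$. Given a $p$-K\"ahler structure $\Omega$ on $(\Gamma\backslash G,J)$, we pull it back to a $\Gamma$-invariant form $\pi^*\Omega$ on $G$. Using a left-invariant frame, we view it as a smooth function $x\mapsto \Omega_x$ from $G$ to $\Lambda^{p,p}\g^*$; since $\pi^*\Omega$ is $\Gamma$-invariant, this function descends to $M$. We then define
\begin{equation*}
\tilde\Omega \coloneqq \int_{M} \Omega_x \, d\mu(x)\in \Lambda^{p,p}\g^*,
\end{equation*}
which is a left-invariant real $(p,p)$-form.

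The key steps, in order, are the following.

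\textbf{Type and reality.} Because $J$ is left-invariant, the bidegree decomposition of $\Lambda\g^*_\C$ is preserved pointwise, and so is complex conjugation. Hence $\tilde\Omega$ is a real $(p,p)$-form.

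\textbf{Transversality.} For each decomposable $0\neq\eta\in\Lambda^{\tilde p,0}\g^*$ we have
\begin{equation*}
i^{\tilde p^2}\tilde\Omega\wedge\eta\wedge\overline\eta=\int_M i^{\tilde p^2}\Omega_x\wedge\eta\wedge\overline\eta\, d\mu .
\end{equation*}
The integrand is a strictly positive multiple of the fixed invariant volume form, so the integral is strictly positive as well. Thus transversality is preserved because the cone of transverse forms is convex and open: we are averaging elements of this cone.

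\textbf{Closedness.} This is the only nontrivial step. The averaging map $\Omega\mapsto\tilde\Omega$ is defined on all forms, and we must show it commutes with $d$, where on the right-hand side $d$ denotes the Chevalley--Eilenberg differential. I would use the characterization via duality. For any left-invariant form $\beta$ of complementary degree, $\int_M \Omega\wedge\beta=\int_M\tilde\Omega\wedge\beta$, because $\beta$ has constant coefficients in the left-invariant frame and $\vol$ is invariant. If $d\Omega=0$, then for every invariant $(2n-2p-2)$-form $\beta$ Stokes gives
\begin{equation*}
\int_M \tilde\Omega\wedge d\beta=\int_M\Omega\wedge d\beta=\pm\int_M d\Omega\wedge\beta=0 .
\end{equation*}
On the other hand, $\int_M\tilde\Omega\wedge d\beta=\pm\int_M d\tilde\Omega\wedge\beta$. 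So $d\tilde\Omega$ is an invariant $(2p+1)$-form that pairs to zero with all invariant forms of complementary degree. Since the wedge pairing $\Lambda^k\g^*\times\Lambda^{2n-k}\g^*\to\Lambda^{2n}\g^*$ is nondegenerate, it follows that $d\tilde\Omega=0$.

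I expect the main (mild) obstacle to be this commuting-with-$d$ step. It uses unimodularity, guaranteed by the existence of a lattice (Milnor), which ensures that the left-invariant volume form is closed and hence that Stokes' theorem applies with invariant test forms. The result is that $\tilde\Omega$ is a $p$-K\"ahler structure on $(\g,J)$.
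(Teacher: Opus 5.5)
Your proposal is correct and is the same symmetrization (averaging) argument the paper invokes by citing \cite{FG,fm}; your duality-plus-Stokes proof of $d\tilde\Omega=0$ is a valid variant of the usual derivation through the invariant formula for $d$ together with $\int_M X(f)\,d\mu=0$. Two small fixes: the test forms $\beta$ must have degree $2n-2p-1$ (not $2n-2p-2$) so that $\tilde\Omega\wedge d\beta$ has top degree, and unimodularity plays no separate role in your argument, since a top-degree form is trivially closed and Stokes on the compact quotient $M$ is all you use (it in fact already forces unimodularity).
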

Thanks to Lemma \ref{simmetrizzazioni}, in the following sections we will work at the Lie algebra level. 
If the Lie group $G$ is solvable, or nilpotent, the compact quotient $M=\Gamma\backslash G$ is called a \textit{solvmanifold}, or \textit{nilmanifold}, respectively.
We recall that on a nilmanifold $\Gamma \backslash G$, we can consider a special class of complex structures, defined in terms of their compatibility with the ascending central series of the Lie algebra of $G$, see \cite{cfgu00}.
\begin{definition}\label{defNilp}
An invariant complex structure $J$ on a nilmanifold $\Gamma \backslash G$ is called \textit{nilpotent} if there is a basis of left-invariant $(1,0)$-forms $\pg{\varphi^1,\dots,\varphi^n}$ such that 
    \begin{equation}\label{nilpCS}
        d \varphi^{1}= 0, \quad d\varphi^{j} \in \Lambda^2\left\langle\varphi^1,\dots,\varphi^{j-1},\varphi^{\bar1},\dots,\varphi^{\overline{j-1}}\right\rangle
            , \quad j=2, \dots, n.
    \end{equation}
We will refer to such a basis as to a \textit{$J$-adapted} basis.
\end{definition}

On nilmanifolds with a nilpotent complex structure $J$, there is an obstruction to the existence of $p$-K\"ahler structures, for a specific value of $p$ depending on $J$.

\begin{proposition}[{\cite[Theorem 2.3]{SfeTar22}}]\label{STthm}
    Let $(\Gamma \backslash G,J)$ be a nilmanifold of complex dimension $n$ equipped with a nilpotent complex structure. Let $l_{0}$ the index such that 
    \begin{equation*}
        d \varphi^{j} = 0, \quad \forall j \leq l_{0}, \quad \quad d \varphi^{l} \neq 0, \quad \forall l > l_{0}.
    \end{equation*}
    Then, if $p\coloneqq n-l_{0}$, there are no $p$-K\"ahler structures on $(\Gamma \backslash G,J)$.
\end{proposition}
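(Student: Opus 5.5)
By Lemma \ref{simmetrizzazioni}, it suffices to work on the Lie algebra $\g$ with left-invariant forms: if $(\Gamma\backslash G,J)$ carried a $p$-K\"ahler structure with $p=n-l_0$, then $(\g,J)$ would carry an invariant one. So I will argue at the level of $\Lambda^{\bullet}\g^*_\C$. Note that $\tilde p = n-p = l_0$. The plan is to contradict either Remark \ref{86volte} or, more generally, Proposition \ref{OstrHMT}. To do so, I will produce a left-invariant $(2l_0-1)$-form $\alpha$ such that $(d\alpha)^{l_0,l_0}$ is a nonzero signed sum of terms $\psi\wedge\overline\psi$ with $\psi$ decomposable of type $(l_0,0)$.

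The natural candidate comes from the closed forms $\varphi^1,\dots,\varphi^{l_0}$ together with the first non-closed form $\varphi^{l_0+1}$. Its differential lies in $\Lambda^2\langle\varphi^1,\dots,\varphi^{l_0},\overline{\varphi^1},\dots,\overline{\varphi^{l_0}}\rangle$, by the adapted condition \eqref{nilpCS} and closedness of the earlier forms. Write $d\varphi^{l_0+1}=\sigma^{2,0}+\sigma^{1,1}+\sigma^{0,2}$ in these variables, and set $\Phi=\varphi^1\wedge\cdots\wedge\varphi^{l_0}$.

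I will split into cases according to which component of $d\varphi^{l_0+1}$ is nonzero.

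If $\sigma^{1,1}\neq0$, I will take
\[
\alpha=\varphi^{l_0+1}\wedge\widehat{\Phi\wedge\overline\Phi},
\]
with a suitable $(l_0-1,l_0-1)$ factor built from $\Phi$ by removing one $\varphi^k$ and one $\overline{\varphi^h}$. Then $d\alpha=d\varphi^{l_0+1}\wedge(\text{closed})$, and only the diagonal entries $\sigma^{1,1}_{k\bar k}$ survive against $\Phi\wedge\overline\Phi$. Summing over $k$ with the appropriate weights, I would get a multiple of $\Phi\wedge\overline\Phi$. This is nonzero after a unitary change of basis of $\langle\varphi^1,\dots,\varphi^{l_0}\rangle$, which makes $i\sigma^{1,1}$ Hermitian-diagonalizable, possibly after replacing $\varphi^{l_0+1}$ by $i\varphi^{l_0+1}$ and taking real parts. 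Hence $d\alpha^{l_0,l_0}=c\,\Phi\wedge\overline\Phi$ with $c\neq0$, and Proposition \ref{OstrHMT} applies.

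If $\sigma^{1,1}=0$ but $\sigma^{2,0}\neq0$ (or, by conjugation, $\sigma^{0,2}\neq 0$), the first attempt is a degree count. The form $\varphi^{l_0+1}\wedge\varphi^1\wedge\cdots$ does not directly give an exact decomposable holomorphic $l_0$-form, because $\Lambda^{l_0+1,0}$ of $l_0$ variables vanishes. So I would instead use the $(0,2)$ part paired with $\overline\Phi$ factors: take $\alpha=\overline{\varphi^{l_0+1}}\wedge(\cdots)$ to produce $\overline{\sigma^{2,0}}$ terms. This is where I expect the main obstacle. It may be necessary to exploit the integrability of $J$, namely $d^2=0$ together with the nilpotent structure, to show that $\sigma^{2,0}$-only differentials at the first non-closed level still force a nonzero diagonal coefficient somewhere. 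Alternatively, one can reduce to a smaller $l$ where a genuine $(1,1)$ component appears.

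In the worst case, I would fall back on a general transversality argument: pair a putative transverse $\Omega$ with $d\alpha$ via Stokes, using $\int d(\Omega\wedge\alpha)=0$ on the nilmanifold. One would then show that $\Omega\wedge d\alpha$ has a sign because $d\alpha^{l_0,l_0}$ is a combination of decomposable $\psi\wedge\overline\psi$ terms with coefficients of one sign. Arranging this sign condition in the $(2,0)$ case is the step I am least sure of.
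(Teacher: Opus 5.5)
Your $(1,1)$ case is correct in substance. The gap is the case $\sigma^{1,1}=0$, $\sigma^{2,0}\neq 0$, which you leave open. You list only possibilities (integrability, reduction to a smaller $l$, a Stokes pairing), and none of them produces the required $\alpha$. The ``fallback'' Stokes argument is just the proof of Proposition \ref{OstrHMT} and still needs such an $\alpha$. The obstacle you describe also comes from a degree miscount: you should not wedge $\varphi^{l_0+1}$ with all of $\varphi^1,\dots,\varphi^{l_0}$, only with $l_0-2$ of them. Separately, $\sigma^{0,2}=0$ automatically because $J$ is integrable, so the ``conjugate'' case never occurs. The paper quotes this result without proof, so here is the missing step.

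No case distinction is needed. All of $\varphi^1,\dots,\varphi^{l_0},\overline{\varphi^1},\dots,\overline{\varphi^{l_0}}$ are closed. Hence for every $\beta\in\Lambda^{2l_0-2}\langle\varphi^1,\dots,\varphi^{l_0},\overline{\varphi^1},\dots,\overline{\varphi^{l_0}}\rangle$ one has $d(\varphi^{l_0+1}\wedge\beta)=d\varphi^{l_0+1}\wedge\beta$. This is a top-degree form in these $2l_0$ covectors, hence a multiple of $\Phi\wedge\overline\Phi$. Since the wedge pairing is nondegenerate and $d\varphi^{l_0+1}\neq 0$, the multiple is nonzero for a suitable monomial $\beta$.

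Concretely, suppose $c\neq0$ is the coefficient of $\varphi^a\wedge\varphi^b$ in $\sigma^{2,0}$, and set
\[
\alpha=\varphi^{l_0+1}\wedge\varphi^1\wedge\cdots\wedge\widehat{\varphi^a}\wedge\cdots\wedge\widehat{\varphi^b}\wedge\cdots\wedge\varphi^{l_0}\wedge\overline\Phi .
\]
The $\sigma^{1,1}$-contribution to $d\alpha$ vanishes, since it would contain $l_0+1$ antiholomorphic covectors from an $l_0$-dimensional span. Every other monomial of $\sigma^{2,0}$ repeats a factor already in $\alpha$. So $d\alpha=\pm c\,\Phi\wedge\overline\Phi$.

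The same device works in your first case. If $\varphi^k\wedge\overline{\varphi^h}$ (diagonal or not) has nonzero coefficient in $\sigma^{1,1}$, omit $\varphi^k$ and $\overline{\varphi^h}$ from $\Phi\wedge\overline\Phi$. The Hermitian diagonalization and weights are therefore unnecessary.

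A single term $c\,\Phi\wedge\overline\Phi$ already satisfies the one-sign hypothesis: multiply $\alpha$ by a suitable complex constant so that $d\alpha=i^{l_0^2}\Phi\wedge\overline\Phi$, then take real parts. Then
\[
0=\int_{\Gamma\backslash G}d(\Omega\wedge\alpha)=\int_{\Gamma\backslash G}\Omega\wedge d\alpha
\]
contradicts the transversality of $\Omega$ against the decomposable form $\Phi$. You do not even need Lemma \ref{simmetrizzazioni}, since $\alpha$ is invariant and descends to the quotient.
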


\begin{remark}\label{remark di unimodularita}
    We note that the analogous statement of Proposition \ref{OstrHMT} still holds on unimodular Lie algebras.
    Indeed, the key argument of the proof is to use Stokes Theorem for integration of exact forms compact manifolds, and in the case of a unimodular Lie algebra this argument can be replaced with the well known fact that $(n,n)$-forms on a unimodular Lie algebra cannot be exact, unless null.
In particular, the analogous of Proposition \ref{STthm} holds for any nilpotent Lie algebra.
\end{remark}

In a similar fashion, we generalize some well established facts about compact complex manifolds to the analogous statements for unimodular Lie algebras.

Recall that if $M$ is a complex, $p$-K\"ahler manifold, and $N$ is a complex submanifold of $M$ of complex dimension at least $p$, then $N$ is also $p$-K\"ahler \cite[Proposition 2.7]{AA}. 
The analogous statement at the level of Lie algebras is explained in the following remark.

\begin{remark}
    Any $J$-invariant subalgebra $\mathfrak h$ of a $p$-K\"ahler Lie algebra $\mathfrak g$ is still $p$-K\"ahler.
    Indeed, for any $p$-K\"ahler  structure $\Omega$  on $(\mathfrak g,J)$, the restriction $\rest{\Omega}{\mathfrak h}$ is a transverse form  on $(\mathfrak h,J)$, which is also closed because $d_\mathfrak h({\rest{\Omega}{\mathfrak h}})={\rest{\pt{d\Omega}}{\mathfrak h}}$.
\end{remark}

On the other hand, $p$-K\"ahler structures are in some sense preserved under holomorphic submersions. 
Let $f: M \to N$ be a holomorphic submersion with $M,N$ compact complex manifolds of complex dimensions $n$ and $n-q$, respectively.
As proved in \cite[Proposition 2.6]{AA}, if $M$ is $p$-K\"ahler, for $p>q,$ then $N$ is $(p-q)$-K\"ahler.

\begin{lemma}\label{Lemma veramente molto importante con cui Asia si e portata a casa la giornata}
    Let $(\g,J)$ be a Lie algebra with complex structure such that $\g$ is a semi-direct product of $J$-invariant Lie algebras $\g=\h\ltimes_\rho\kk$, and assume that $\kk$ is unimodular and $\rho\colon \h\to\operatorname{End}(\kk)$ takes values in traceless endomorphisms.
    Let $n$ and $q$ be the complex dimensions of $\g$ and $\kk$, respectively.
    If $(\g,J)$ is $p$-K\"ahler, for some $p>q,$ then $(\h,J)$ is $(p-q)$-K\"ahler.
\end{lemma}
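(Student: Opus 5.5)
Fiber integration along $\kk$ is the Lie-algebra analogue of pushing forward a $p$-K\"ahler form along a holomorphic submersion, as in \cite[Proposition 2.6]{AA}. Since $\g=\h\ltimes_\rho\kk$ with both pieces $J$-invariant, we have $\g^*_\C=\h^*_\C\oplus\kk^*_\C$. Here $\h^*$ is identified with the annihilator of $\kk$, which is a subalgebra of the dual since $\kk$ is an ideal. The bigrading splits accordingly.

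We fix a volume form $\nu\in\Lambda^{q,q}\kk^*$, extended to $\g$ by zero on $\h$. For a $(p,p)$-form $\Omega$ on $\g$, we define $\pi_*\Omega\in\Lambda^{p-q,p-q}\h^*$ as the component of $\Omega$ in $\Lambda^{p-q,p-q}\h^*\otimes\Lambda^{q,q}\kk^*$, divided by $\nu$; that is, $\pi_*\Omega$ is the contraction of $\Omega$ with the dual $(q,q)$-vector of $\kk$. The plan is to show that $\pi_*\Omega$ is transverse on $\h$ and closed for $d_\h$.

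Transversality is pointwise linear algebra. Take a nonzero decomposable $\eta\in\Lambda^{\tilde p,0}\h^*$; note that $\tilde p=n-p=\dim_\C\h-(p-q)$, so $\tilde p$ is the complementary degree in $\h$ as well. Then
\[
i^{\tilde p^2}\pi_*\Omega\wedge\eta\wedge\bar\eta\wedge\nu
\]
equals, up to a fixed positive constant, $i^{\tilde p^2}\Omega\wedge\eta\wedge\bar\eta$ on $\g$. The reason is that only the $\Lambda^{q,q}\kk^*$-component of $\Omega$ survives against $\eta\wedge\bar\eta$, which is purely along $\h^*$ and completes top degree on $\h$. Since $\eta$ is still decomposable as a form on $\g$, the right-hand side is strictly positive by transversality of $\Omega$. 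Equivalently, $\pi_*\Omega$ restricted to a $(p-q)$-plane $P\subset\h$ is the restriction of $\Omega$ to $P\oplus\kk$. We need to check the sign conventions for $i^{\tilde p^2}$ and for reordering the $(1,1)$-blocks, but these only contribute positive factors.

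Closedness is the main obstacle. We write $d_\g$ on forms in $\Lambda^\bullet\h^*\otimes\Lambda^\bullet\kk^*$. For $\alpha\in\h^*$ we have $d_\g\alpha=d_\h\alpha$, since $[\g,\g]\subset\h\oplus\kk$ projects to $[\h,\h]$ on $\h$. For $\beta\in\kk^*$, $d_\g\beta$ has three parts: $d_\kk\beta$ in $\Lambda^2\kk^*$, a term $-\sum_a \alpha^a\wedge\rho(e_a)^*\beta$ in $\h^*\otimes\kk^*$, and no $\Lambda^2\h^*$ part because $[\h,\h]\subset\h$.

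Consider the top-degree piece $\Lambda^{2q}\kk^*$, which is spanned by $\nu$. On it, $d_\kk\nu=0$ for degree reasons. The mixed term is $-\sum_a\alpha^a\wedge(\operatorname{tr}\rho(e_a))\nu=0$ because $\rho$ takes traceless values. So $d_\g(\gamma\wedge\nu)=d_\h\gamma\wedge\nu$ for $\gamma\in\Lambda\h^*$. Next, for a component $\gamma\wedge\beta$ of $\Omega$ with $\beta\in\Lambda^{2q-1}\kk^*$, the part of $d_\g(\gamma\wedge\beta)$ lying in $\Lambda\h^*\otimes\Lambda^{2q}\kk^*$ is $\pm\gamma\wedge d_\kk\beta$. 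By unimodularity of $\kk$, $d_\kk\colon\Lambda^{2q-1}\kk^*\to\Lambda^{2q}\kk^*$ vanishes. The mixed term from $\rho$ keeps $\kk$-degree $2q-1$, so it does not contribute here. Components of $\Omega$ with lower $\kk$-degree can only raise $\kk$-degree by at most one, so they never reach $\kk$-degree $2q$.

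Therefore the $\Lambda\h^*\otimes\Lambda^{2q}\kk^*$ component of $d_\g\Omega$ is exactly $d_\h(\pi_*\Omega)\wedge\nu$. Since $d\Omega=0$, we get $d_\h\pi_*\Omega=0$. Together with transversality, this shows that $\pi_*\Omega$ is a $(p-q)$-K\"ahler structure on $(\h,J)$.
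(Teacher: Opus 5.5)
Your proof is correct and follows the paper's argument. You isolate the component of $\Omega$ of top degree $2q$ along $\kk$, use tracelessness of $\rho$ to get $d\nu=0$, and use unimodularity of $\kk$ to kill the contribution of the component of $\kk$-degree $2q-1$, just as in the paper. The only difference is that you check transversality of $\pi_*\Omega$ directly, by wedging with decomposable $(\tilde p,0)$-forms pulled back from $\h$ and noting that only the top $\kk$-component of $\Omega$ survives, whereas the paper obtains it by iterating \cite[Theorem 3.4]{FagMai}; your route is self-contained and equally valid.
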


\begin{proof}
    Let $\Omega$ be a $p$-K\"ahler structure on $(\g,J)$, and write
    \begin{equation}\label{oml}
        \Omega=\sum_{l=0}^{N}\Omega_l,\quad\Omega_l\in\Lambda^{2(p-q)+l}\h^* \wedge\Lambda^{2q-l}\kk^*,
    \end{equation}
    with $N=\min\pg{2(n-p),2q}.$
    In particular, since $\kk$ has dimension $2q$, and comparing types, we get that $\Omega_0=\Psi\wedge\vol_\kk$, for some $\Psi\in\Lambda^{p-q,p-q}\h^*_\C$ and $0\neq\vol_\kk\in\Lambda^{q,q}\kk^*_\C$.
    Furthermore, by iterations of \cite[Theorem 3.4]{FagMai}, $\Psi$ is transverse.
    The upshot is that $\Psi$ is also closed with respect to the differential operator of $\h$, which we denote with $d_\h$, thus completing the proof.
    To prove this, we decompose $d\Omega$ as in \eqref{oml}, and consider the component $(d\Omega)_0$ having degree $2q$ in $\kk$. 
From the closure of $\Omega$, and comparing types, we find
\begin{equation}\label{dom0}
    0=(d\Omega)_0=d\Psi\wedge\vol_\kk+\Psi\wedge d\vol_\kk+(d\Omega_1)_0,
\end{equation}
where once again $(d\Omega_1)_0$ denotes the component of $d\Omega_1$ in $\Lambda^{2(p-q)+1}\h^* \wedge\Lambda^{2q}\kk^*$.
Now, $d\vol_\kk=0$ is equivalent to the action $\rho$ defining the semidirect product taking values in traceless endomorphisms, so to conclude the proof it is enough to show that $(d\Omega_1)_0=0$.
Indeed, if this were to hold, \eqref{dom0} would read $0=d\Psi\wedge\vol_\kk=d_\h\Psi\wedge\vol_\kk$, implying that $d_\h\Psi=0.$
To prove  that $(d\Omega_1)_0=0$, we write $\Omega_1=\sum_j\alpha_j\wedge\beta_j$, for some $\alpha_j\in\Lambda^{2(p-q)+1}\h^*$, $\beta_j\in\Lambda^{2q-1}\kk^*$, and note that
\begin{equation*}
    (d\Omega_1)_0=\sum_j(d\alpha_j\wedge\beta_j-\alpha_j\wedge d\beta_j)_0
    =-\sum_j\alpha_j\wedge d_\kk\beta_j,
\end{equation*}
where $(d\alpha_j\wedge\beta_j)_0=0$ because $d\pt{\Lambda\h^*}\subset\Lambda\h^*$, being $\kk$ an ideal.
Recall now that $\kk$ is unimodular, so $d_\kk\beta_j$ has to vanish, for all $j$, since it is a $2q$-form in dimension $2q.$
\end{proof}

The assumptions of the Lemma are for instance satisfied by direct products $\g=\h\oplus\kk$, with $\kk$ unimodular.

\medskip

Among compact quotients of Lie groups by lattices, examples and properties of $p$-K\"ahler structures are know best on compact {\em holomorphically parallelizable} manifolds, thanks to the extensive work by Alessandrini and Bassanelli.
Recall that a complex manifold $(M,J)$ is called holomorphically parallelizable if $T^{1,0}M$ is trivial as a holomorphic bundle. 
It is known that a compact complex manifold is holomorphically parallelizable if and only if it is the compact quotient of a connected, simply-connected, complex Lie group by a lattice, see \cite{wang}. 
In such manifolds, characterization in terms of currents can be refined in terms of forms. 

\begin{proposition}[{\cite[Theorem 3.2]{AB}}]\label{AB3.2}
    Let $(M,J)$ be a compact holomorphically parallelizable manifold of complex dimension $n$ and let $1 \leq p \leq n-1$. Then, the following statements are equivalent:
    \begin{itemize}
        \item $M$ admits a $p$-K\"ahler structure;
        \item there are no non-zero, decomposable, exact and holomorphic $(\tilde p,0)$-forms.
    \end{itemize}
\end{proposition}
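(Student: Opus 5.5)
We prove the two implications separately. The forward one is a Stokes argument; the converse is a duality argument carried out on the Lie algebra.

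\emph{$p$-K\"ahler implies no exact decomposable holomorphic forms.} This is the content of Remark~\ref{86volte}, and we verify it directly. Suppose $0\neq\psi=d\alpha$ is a decomposable holomorphic $(\tilde p,0)$-form. Since $\psi$ is exact, $d\bar\psi=0$, hence $i^{\tilde p^2}\psi\wedge\bar\psi=d\pt{i^{\tilde p^2}\alpha\wedge\bar\psi}$. If $\Omega$ is a $p$-K\"ahler structure, then $\Omega\wedge i^{\tilde p^2}\psi\wedge\bar\psi$ is exact. By transversality it is also a non-negative multiple of the volume form, strictly positive wherever $\psi\neq0$. Integrating over the compact manifold $M$ and applying Stokes gives a contradiction.

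\emph{Converse: reduction to a strongly positive exact form.} Write $M=\Gamma\backslash G$ with $G$ a complex Lie group, which is unimodular since it admits a lattice. By Lemma~\ref{simmetrizzazioni} and its converse (a $p$-K\"ahler structure on $\g$ descends to $M$), it suffices to show: if $(\g,J)$ has no $p$-K\"ahler structure, then there is a nonzero exact decomposable form in $\Lambda^{\tilde p}\g^{*}_{1,0}$, where $\g^*_{1,0}$ denotes the holomorphic, i.e.\ complex-linear, dual. Such a form gives a holomorphic form on $M$.

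Let $Z\subset\Lambda^{p,p}$ be the real closed forms and $C$ the open cone of transverse forms. If $Z\cap C=\emptyset$, Hahn--Banach gives a nonzero linear functional that vanishes on $Z$ and is non-negative on $C$. Via the wedge pairing into $\Lambda^{n,n}$, this is a nonzero $(\tilde p,\tilde p)$-form $T$ lying in the dual of the weakly positive cone, which is the strongly positive cone. The condition that $T\wedge Z=0$ should be read through unimodular Poincar\'e duality, since on a unimodular Lie algebra $d$ is, up to sign, adjoint to itself under the wedge pairing (cf.\ Remark~\ref{remark di unimodularita}). With this reading, $T$ annihilates $\ker d$ exactly when $T\in\operatorname{im} d$. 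Hence $T=d\beta$ and
\[
T=i^{\tilde p^2}\sum_j\psi_j\wedge\overline{\psi_j},
\]
with $\psi_j$ decomposable in $\Lambda^{\tilde p,0}$.

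\emph{Converse: making the $\psi_j$ holomorphic, closed and exact.} This is the step I expect to be the main obstacle. Since $\g$ is a complex Lie algebra, $\Lambda^{\bullet,\bullet}\g^*_\C\cong\Lambda^\bullet\g^*_{1,0}\otimes\Lambda^\bullet\overline{\g^*_{1,0}}$ as a tensor product of complexes. Under this identification $d=d_1\otimes1\pm1\otimes\bar d_1$, where $d_1$ is the Chevalley--Eilenberg differential of the complex Lie algebra. Decomposable $(1,0)$-forms are then elements of $\g^*_{1,0}$, so each $\psi_j$ is already a holomorphic invariant form.

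Regard $T$ as a positive Hermitian tensor $AA^*$ on $\Lambda^{\tilde p}\g^*_{1,0}$, where $A$ has columns $\psi_j$. Consider the component of $dT=0$ lying in $\Lambda^{\tilde p+1}\otimes\overline{\Lambda^{\tilde p}}$. It reads $(d_1A)A^*=0$, hence $(d_1A)(d_1A)^*=0$, so $d_1\psi_j=0$ for every $j$. Next, let $q\colon\ker d_1\to H^{\tilde p}(\g)$ be the quotient map. An exact element of the tensor complex whose factors are closed projects to zero in $H^{\tilde p}\otimes\overline{H^{\tilde p}}$, by the K\"unneth formula. Therefore $(qA)(qA)^*=0$, so every $\psi_j$ is exact. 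Since $T\neq0$, some $\psi_j$ is nonzero, and it is the required exact decomposable holomorphic $(\tilde p,0)$-form. This contradicts the hypothesis and completes the proof.
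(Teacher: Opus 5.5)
\textbf{There is a genuine gap in the converse, at the step ``hence $T=d\beta$''.} It is repairable.

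Your forward (Stokes) direction is correct. So is the production, via Hahn--Banach and the duality between the weakly and strongly positive cones, of a nonzero $T=i^{\tilde p^2}\sum_j\psi_j\wedge\overline{\psi_j}$ vanishing against $Z$.

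The problem is what Hahn--Banach actually gives you. You only know that $T$ pairs to zero with $Z=\ker d\cap\Lambda^{p,p}$, and the annihilator of an intersection is the sum of the annihilators. Unimodular duality gives $(\ker d)^{\perp}=\operatorname{im} d$, and the annihilator of $\Lambda^{p,p}$ is spanned by the other bidegrees. So what you obtain is
\[
T\in(\operatorname{im} d)^{\tilde p,\tilde p}=\partial\Lambda^{\tilde p-1,\tilde p}+\bar\partial\Lambda^{\tilde p,\tilde p-1}.
\]
That is, $T$ is the $(\tilde p,\tilde p)$-component of an exact form, which is precisely the shape of the obstruction in Proposition~\ref{OstrHMT}. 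It is not shown to be an exact form. To get $T\in\operatorname{im} d$ from duality, $T$ would have to pair to zero with the $(p,p)$-components of all closed $2p$-forms, and these components need not be closed.

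Hence $dT=0$ is not justified, and both your closedness step and the K\"unneth step rest on it. For general elements of $(\operatorname{im} d)^{\tilde p,\tilde p}$ it is false. On $\eta\beta_5$, put $\tau=d\varphi^5=\varphi^{12}+\varphi^{34}$. Then $\tau\wedge\overline{\varphi^{15}}$ is the $(2,2)$-component of $d(\varphi^5\wedge\overline{\varphi^{15}})$, yet $d(\tau\wedge\overline{\varphi^{15}})=-\tau\wedge\overline{\varphi^{134}}\neq0$.

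\textbf{Repair.} The fix is short and actually simplifies your argument. Let $B=d_1(\Lambda^{\tilde p-1}\g^*_{1,0})$, and let $\pi$ be the quotient map $\Lambda^{\tilde p}\g^*_{1,0}\to\Lambda^{\tilde p}\g^*_{1,0}/B$. In your tensor picture,
\[
\partial\Lambda^{\tilde p-1,\tilde p}+\bar\partial\Lambda^{\tilde p,\tilde p-1}=B\otimes\overline{\Lambda^{\tilde p}\g^*_{1,0}}+\Lambda^{\tilde p}\g^*_{1,0}\otimes\overline{B}.
\]
So $(\pi\otimes\bar\pi)(T)=0$, i.e.\ $\sum_j\pi(\psi_j)\otimes\overline{\pi(\psi_j)}=0$. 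Your $AA^*$ argument then gives $\pi(\psi_j)=0$ for all $j$. Thus every $\psi_j=d_1\sigma_j=d\sigma_j$ is exact, with no need for $dT=0$ or K\"unneth. A posteriori $T$ is $\partial\bar\partial$-exact, so $T\in\operatorname{im} d$ does hold, but only because of positivity.

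\textbf{Comparison with the paper.} The argument the paper recalls from \cite{AB}, in the proof of its last proposition, avoids separation theorems altogether. Take a basis $\psi^1,\dots,\psi^l$ of the closed forms in $\Lambda^{p,0}$ and set $\Omega=i^{p^2}\sum_j\psi^j\wedge\overline{\psi^j}$, which is real and closed. For decomposable $\eta\in\Lambda^{\tilde p,0}$, the vanishing of $\Omega\wedge\eta\wedge\bar\eta$ forces $\eta\wedge\psi^j=0$ for all $j$. So $\eta$ lies in the annihilator of the closed holomorphic $p$-forms, which by holomorphic unimodular duality consists of exact forms; hence $\eta=0$. This uses the same duality as your repaired proof, but only on the holomorphic complex. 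It also yields an explicit strongly positive $p$-K\"ahler structure, which the paper later exploits for its Bott--Chern statement.
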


As per to Remark \ref{remark di unimodularita}, an analogous version of Proposition \ref{AB3.2} holds for unimodular Lie algebras endowed with a biinvariant complex structures.

\smallskip
To conclude this section, we recall an example of a family of holomorphically parallelizable nilmanifolds admitting admitting $p$-K\"ahler structures.

\begin{example}\label{exetabeta}
    In \cite[Section 4]{AB} the authors study the holomorphically parallelizable nilmanifolds $\eta\beta_{2m+1}=(\Gamma \backslash \operatorname{H}_{2m+1},J)$, compact quotients of the complex Heisenberg group of (complex) dimension $2m+1$, endowed with the standard complex structure induced by multiplication by $i$. 
    As it turns out, $\eta\beta_{2m+1}$ has $p$-K\"ahler structures if and only if $p \ge m + 1$. 
    Furthermore, it admits a global coframe of $(1,0)$-forms $\{\varphi^{1}, \dots, \varphi^{2m +1}\}$ such that 
    \begin{equation}\label{etabeta}
        \begin{cases}
            d\varphi^j=0,   &   j\le 2m ,\\
        d \varphi^{2m+1}= \sum_{s=1}^{m} \varphi^{2s-1} \wedge \varphi^{2s}.
        \end{cases}
    \end{equation}
\end{example}

\section{Positivity in higher degree}\label{secPos}

In this section we will prove preliminary results on transversality of exterior forms on complex vector spaces, and more precisely when a higher degree positive form can be constructed, starting from a lower degree one.

\smallskip
We begin studying powers of transverse forms.
Indeed, it is known that the wedge product of strongly positive forms is still strongly positive (\cite[Proposition III.1.11]{demailly}), whereas the same does not hold for weak positivity.
Indeed, positivity of the square of weakly positive forms is studied in  \cite{BlockiPlis}, and in what follows, we prove analogous results on transverse forms.

\begin{lemma}\label{a^2transv}
    For every transverse $(2,2)$-form $\alpha$ in $\C^4$,  its square $\alpha^2$ is a positive volume form.
\end{lemma}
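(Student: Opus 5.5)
We would reduce the statement to a question about Hermitian matrices on $\Lambda^{2,0}\cong\C^6$. Fix a basis $e^1,\dots,e^4$ of $(1,0)$-forms and write the transverse form as
\begin{equation*}
\alpha=\sum_{I,J}a_{IJ}\,e^I\wedge\overline{e^J},
\end{equation*}
where $I,J$ run over ordered pairs and the factor $i^{4}=1$ is absorbed. Reality of $\alpha$ makes $A=(a_{IJ})$ a Hermitian $6\times 6$ matrix, i.e.\ a Hermitian form $H$ on $\Lambda^{2,0}$. Transversality says exactly that $H(\psi,\psi)>0$ for every nonzero decomposable $\psi\in\Lambda^{2,0}$. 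The wedge product $\Lambda^{2,0}\times\Lambda^{2,0}\to\Lambda^{4,0}\cong\C$ is a nondegenerate complex symmetric bilinear form $q$. By the Plücker relation, $\psi$ is decomposable if and only if $q(\psi,\psi)=0$, so transversality means that $H$ is positive on the Klein quadric cone $\{q(\psi,\psi)=0,\ \psi\neq0\}$.

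The first step is to compute $\alpha\wedge\alpha$ in these terms. Expanding, the coefficient of $\vol$ is (up to a fixed positive constant) $\sum_{I,J}\varepsilon_I\varepsilon_J\,a_{IJ}a_{I'J'}$, where $I'$ is the complementary pair and $\varepsilon_I$ the corresponding sign. In matrix form this is $\operatorname{tr}(A\,S\,A^{T}S)$, with $S$ the matrix of $q$.

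Next, choose a $q$-orthonormal basis, i.e.\ identify $(\Lambda^{2,0},q)$ with $(\C^6,\sum z_k^2)$. This is legitimate since we may change basis by any element of $O(6,\C)$, e.g.\ via $SL(4,\C)\to SO(6,\C)$. With $S=I$, the real structure of $\C^6$ splits $A=P+iQ$ with $P$ real symmetric and $Q$ real antisymmetric, because $A$ is Hermitian. The coefficient becomes
\begin{equation*}
\operatorname{tr}(A\bar A)=\operatorname{tr}P^2-\operatorname{tr}(Q^TQ)=\|P\|^2-\|Q\|^2 .
\end{equation*}
Null vectors are $v=x+iy$ with $x,y\in\R^6$, $|x|=|y|$, $x\perp y$. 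For these, transversality reads
\begin{equation*}
P(x,x)+P(y,y)+2\,x^TQy>0,
\end{equation*}
and replacing $y$ by $-y$ gives $P(x,x)+P(y,y)>2|x^TQy|$ for all orthonormal pairs $x,y$.

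The main obstacle is to deduce $\|P\|^2>\|Q\|^2$ from this family of inequalities. We would first put $Q$ in real normal form, block diagonal with $2\times2$ blocks $\mu_k\begin{pmatrix}0&1\\-1&0\end{pmatrix}$ for $k=1,2,3$, using the remaining $O(6,\R)$ freedom. We would then test the inequality on the adapted orthonormal pairs $(f_{2k-1},f_{2k})$, giving $P_{2k-1,2k-1}+P_{2k,2k}>2|\mu_k|$. Mixed pairs such as $\tfrac1{\sqrt2}(f_1\pm f_3)$ and $\tfrac1{\sqrt2}(f_2\pm f_4)$ would be used to control the off-diagonal entries of $P$. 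Since $\|Q\|^2=2\sum\mu_k^2$ and $\|P\|^2\ge\sum_j P_{jj}^2\ge\sum_k\tfrac12(P_{2k-1,2k-1}+P_{2k,2k})^2>2\sum\mu_k^2$, the diagonal estimate already seems to suffice. If that bookkeeping fails, for instance because of a sign issue in the constant relating $\operatorname{tr}(A\bar A)$ to $\alpha^2$, we would fall back on averaging the transversality inequality over the compact orbit of null vectors under $SO(6,\R)$. This expresses $\|P\|^2-\|Q\|^2$ as an integral of products of positive quantities.

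Finally, we would check the normalising constant and sign on $\alpha=\omega^2$, with $\omega$ the standard Kähler form. This confirms that $\alpha^2$ is a positive, indeed strictly positive, multiple of $\vol$.
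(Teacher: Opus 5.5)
Your argument is correct, and it takes a genuinely different route from the paper. The paper stays in Pl\"ucker coordinates and relies on \cite{BlockiPlis}. It uses \cite[Lemma 6]{BlockiPlis} to pass to a basis adapted to $\alpha$, which reduces the problem to a $4\times 4$ block of the Hermitian matrix. It then reruns the explicit computations of \cite[Theorem 7]{BlockiPlis} on special points of $z_1z_4+z_2z_3=0$, with strictness inherited from the hypothesis.

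You instead diagonalize the Klein quadric, so the decomposable cone becomes the cone over orthonormal $2$-frames in $\R^6$, and the claim reduces to $\|P\|^2>\|Q\|^2$. Your main line already closes this. The three frames $(f_{2k-1},f_{2k})$ adapted to the normal form of $Q$ give $P_{2k-1,2k-1}+P_{2k,2k}>2|\mu_k|$. Hence $\|P\|^2\ge\sum_j P_{jj}^2\ge\frac12\sum_k(P_{2k-1,2k-1}+P_{2k,2k})^2>2\sum_k\mu_k^2=\|Q\|^2$.

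The mixed test pairs and the averaging fallback are therefore not needed, and there is no sign problem. For $n=4$ one has $e^{1234}\wedge\overline{e^{1234}}=\vol$, so the coefficient of $\alpha^2$ is exactly $\sum_{I,J}\varepsilon_I\varepsilon_J a_{IJ}a_{I'J'}$. As a check, $\alpha=\omega^2$ has $A=2I$ and gives $24$, matching $\omega^4=24\vol$.

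One justification should be rephrased. The passage to a $q$-orthonormal basis is a general congruence $N^TSN=I$, not an element of $O(6,\C)$ or something induced by $SL(4,\C)$. It is legitimate for a different reason. The coefficient of $\alpha^2$ is the contraction of $A\otimes A$ against $S^{-1}\otimes\overline{S}^{-1}$; in the Pl\"ucker basis, where $S=\overline S=S^{-1}$, this reduces to your $\operatorname{tr}(ASA^TS)$. Both this contraction and the null cone are invariant under simultaneous congruence by any $N\in GL(6,\C)$. Only after that step do you use $O(6,\R)$, which also preserves the splitting $A=P+iQ$.

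As for what each route buys: yours is short and self-contained. The same lines prove the non-strict statement for weakly positive forms, \cite[Theorem 1]{BlockiPlis}. It also shows why bidegree $(2,2)$ on $\C^4$ is special: decomposability is cut out by a single nondegenerate quadric. The paper's route transports an existing published computation and keeps the parallel with the weakly positive case explicit, but it is not self-contained.
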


\begin{proof}
The proof follows the ideas of the proof of \cite[Theorem 1]{BlockiPlis}, where the analogous statement is proved, for weakly positive forms.
We briefly recall the strategy of the latter.
Firstly, a preferred basis $\mathcal{B}$ of the space of $(2,0)$-forms is fixed, allowing to associate to every $(2,2)$-form $\alpha$ a Hermitian $6\times6$ matrix $A_\alpha$, where $6=\dim\Lambda^{2,0}\pt{\C^4}^*$.
Furthermore, the space of decomposable forms is isomorphic to the image of the Pl\"ucker embedding of the Grassmannian of complex $2$-vector spaces in $\C^4$ into $\mathbb P\Lambda^{2,0}\pt{\C^4}^*$.
Writing down explicitly the equations defining the Pl\"ucker quadric ${\mathcal P}$, the upshot is that $\alpha$ is weakly positive, or transverse, if $A_\alpha$ is positive semi-definite, or positive definite, respectively, on the quadric $\pg{z=\pt{z_1,\dots,z_6}\colon z_1z_6+z_2z_5+z_3z_4=0}$.
Up to a change of basis $\mathcal{B}$ with a new one, adapted to $\alpha$ (see \cite[Lemma 6]{BlockiPlis}), the problem can be reduced to a similar one, on a $4\times 4$ block of $A_\alpha$, as in \cite[Theorem 7]{BlockiPlis}.

In our setting, we can still use \cite[Lemma 6]{BlockiPlis}, to conclude that the statement of the Lemma is equivalent to show that for every Hermitian $4\times4$ matrix $A=(a_{j,k})$, if
\begin{equation}\label{pluckerino}
    \bar zAz^t>0,\quad\text{for all }z=(z_1,\dots,z_4)\neq0\text{ with }z_1z_4+z_2z_3=0,
\end{equation}
then
\begin{equation*}
    \sum_{j,k=1}^4a_{j,k}a_{5-j,5-k}>0.
\end{equation*}
In order to prove this, it is enough to repeat the computations in the proof of \cite[Theorem 7]{BlockiPlis}, choosing particular instances of $z$ in \eqref{pluckerino}.
The strict inequality in the assumption \eqref{pluckerino} guarantees the strict inequality in the thesis.
\end{proof}

\begin{corollary}\label{corSquare2}
    Let $n\ge4$, and $\Omega$ a transverse $(2,2)$-form on $\C^n$.
    Then, $\Omega^2$ is a transverse $(4,4)$-form on $\C^n$.
\end{corollary}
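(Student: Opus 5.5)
We reduce the statement to Lemma \ref{a^2transv} by restriction to $4$-dimensional subspaces. By the remark following the definition of transversality (\cite[Criterion III.1.6]{demailly}), a real $(4,4)$-form on $\C^n$ is transverse if and only if its restriction to every complex $4$-dimensional subspace $W\subset\C^n$ is a strictly positive volume form on $W$. So we fix an arbitrary such $W$ and show that $\rest{(\Omega^2)}{W}$ is strictly positive.

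First, restriction (pullback under the inclusion $W\hookrightarrow\C^n$) commutes with wedge products, so
\begin{equation*}
\rest{(\Omega^2)}{W}=\pt{\rest{\Omega}{W}}^2 .
\end{equation*}
Second, $\rest{\Omega}{W}$ is a transverse $(2,2)$-form on $W\cong\C^4$. Indeed, for any complex $2$-dimensional subspace $U\subset W$, we have $\rest{(\rest{\Omega}{W})}{U}=\rest{\Omega}{U}$, and this is strictly positive because $\Omega$ is transverse on $\C^n$. Applying the same criterion again, now inside $W$, gives transversality of $\rest{\Omega}{W}$. Also, $\rest{\Omega}{W}$ is real, since $\Omega$ is.

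Finally, Lemma \ref{a^2transv} applied to $\alpha=\rest{\Omega}{W}$ on $W\cong\C^4$ yields that $\alpha^2$ is a positive volume form. This means strictly positive, by the paper's convention that a positive volume form is automatically strictly positive, and this is exactly what the lemma's strict-inequality conclusion delivers. Since $W$ was arbitrary, $\Omega^2$ is transverse. Reality of $\Omega^2$ is immediate from that of $\Omega$.

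The only point requiring care is the orientation convention: the identification $W\cong\C^4$ must use the canonical orientation of $W$ as a complex space, with which positivity of $(4,4)$-forms on $W$ is defined. This is independent of the choice of complex basis, so Lemma \ref{a^2transv} applies without ambiguity. There is no substantial obstacle: all the content lies in Lemma \ref{a^2transv}, and the corollary is a formal consequence of the restriction criterion.
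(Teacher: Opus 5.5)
Your proposal is correct and follows essentially the same approach as the paper: you restrict to an arbitrary complex $4$-dimensional subspace, use $\rest{(\Omega^2)}{W}=\pt{\rest{\Omega}{W}}^2$, and apply Lemma \ref{a^2transv}. You also spell out explicitly that $\rest{\Omega}{W}$ is again transverse, a step the paper leaves implicit.
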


\begin{proof}
    We use the characterization of transversality via restriction to complex subspaces.
    More precisely, $\Omega^2$ is transverse if and only if, for every complex subspace $V$ of $\C^n$ of dimension $4$, $\rest{\Omega^2}{V}$ is a positive volume form.
    This is true for every such $V$, because $\rest{\Omega^2}{V}=\pt{\rest{\Omega}{V}}^2$, and using Lemma \ref{a^2transv} for $\alpha=\rest{\Omega}{V}$.
\end{proof}

We will now show that the same stability of transversality with respect to squares does not hold for higher degree forms.

\begin{lemma}\label{a^2NOtra}
    For every $p\ge3$, there exists a transverse $(p,p)$-form $\beta_p$ on $\C^{2p}$ such that  $\beta_p^2$ is a negative volume form.
\end{lemma}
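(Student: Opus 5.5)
The plan is to build $\beta_p$ as a perturbation of the standard form, $\beta_p=\omega_p-c\, i^{p^2}\psi\wedge\overline{\psi}$. Here $\omega$ is the standard Kähler form on $\C^{2p}$, $\omega_p\coloneqq\omega^p/p!$, $c>0$, and $\psi\in\Lambda^{p,0}$ is a carefully chosen, highly non-decomposable $(p,0)$-form.

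Both transversality and the sign of $\beta_p^2$ can be read off from the Hermitian form that $\beta_p$ induces on $\Lambda^{p,0}$. For a complex $p$-dimensional subspace $V$ with unit decomposable form $\eta_V$, one has $\rest{\omega_p}{V}=\vol_V$ and $\rest{i^{p^2}\psi\wedge\overline\psi}{V}=|\langle\psi,\eta_V\rangle|^2\vol_V$. Set
\begin{equation*}
m(\psi)\coloneqq\max\pg{|\langle\psi,\eta\rangle|^2 \colon \eta\ \text{decomposable},\ |\eta|=1}.
\end{equation*}
By the restriction criterion recalled after the definition of transversality, $\beta_p$ is transverse if and only if $c\,m(\psi)<1$.

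On the other hand, expanding the square gives
\begin{equation*}
\beta_p^2=\omega_p^2-2c\,\omega_p\wedge i^{p^2}\psi\wedge\overline\psi+c^2\, i^{2p^2}\psi\wedge\overline\psi\wedge\psi\wedge\overline\psi .
\end{equation*}
For the first two terms, $\omega_p^2=\binom{2p}{p}\vol$, and $\omega_p\wedge i^{p^2}\psi\wedge\overline\psi=|\psi|^2\vol$, since $\omega_p$ is the Hodge dual pairing on $(p,p)$-forms; the orientation and sign conventions need to be checked. The last term involves $\psi\wedge\psi\in\Lambda^{2p,0}$. It vanishes when $p$ is odd. When $p$ is even it is $\pm|\psi\wedge\psi|^2\vol$, and I would either choose $\psi$ with $\psi\wedge\psi=0$, or absorb this term, since it is of order $c^2$ and could a priori have either sign.

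The statement therefore reduces to finding $\psi$ with
\begin{equation*}
\frac{|\psi|^2}{m(\psi)}>\frac12\binom{2p}{p}
\end{equation*}
(together with the handling of the $c^2$ term). Given such $\psi$, take $c$ slightly smaller than $1/m(\psi)$: then $\beta_p$ is transverse and $\beta_p^2=\pt{\binom{2p}{p}-2c|\psi|^2}\vol<0$.

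As a consistency check, for $p=2$ the maximal ratio in $\Lambda^{2}\C^4$ is $2<3$, which matches Lemma \ref{a^2transv}. So the whole difficulty lies in the case $p\ge3$, and I expect this to be the main obstacle: a sharp lower bound on the ratio $|\psi|^2/m(\psi)$, i.e. an upper bound on the largest overlap of a fixed $p$-vector with the Plücker cone.

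For $p=3$ I would try the explicit, highly symmetric candidates in $\Lambda^3\C^6$, computing $m(\psi)$ by a Lagrange-multiplier argument on the Grassmannian. A natural first candidate is $\psi=\sum_{\sigma}\epsilon_\sigma\, e^{\sigma(1)}\wedge e^{\sigma(2)}\wedge e^{\sigma(3)}$ over a set of index triples forming a Steiner-type design, so that no $3$-plane captures much of $\psi$.

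If a single $\psi$ does not suffice, I would replace $i^{p^2}\psi\wedge\overline\psi$ by a sum $\sum_j c_j\,i^{p^2}\psi_j\wedge\overline{\psi_j}$ over an orthonormal family. This amounts to choosing a Hermitian matrix $A$ on $\Lambda^{p,0}$ that is positive on the Plücker cone and for which $\beta_p^2$ is the trace of $A$ composed with the antilinear wedge-duality on $\Lambda^{p,0}\C^{2p}$, as in \cite{BlockiPlis}.

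A fallback route is to take a weakly positive form with negative square, as produced in \cite{BlockiPlis} for the relevant degrees. One then adds $\varepsilon\,\omega_p$: the result is transverse for every $\varepsilon>0$, and its square stays negative for $\varepsilon$ small by continuity. For $p>3$ the ratio requirement only becomes easier, because generic $p$-vectors have overlap with decomposables of order much smaller than $\binom{2p}{p}^{-1/2}$. Alternatively, one may try to pass from $p$ to $p+1$ by wedging with a suitable $(1,1)$-form on an added $\C^2$ factor, checking transversality and the sign of the square by the same restriction argument.
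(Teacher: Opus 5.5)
Your fallback (the Błocki–Pliś route in your last paragraph) is exactly the paper's proof. The paper takes the weakly positive $(p,p)$-form $\alpha_p$ on $\C^{2p}$ with $\alpha_p^2<0$ from \cite[Theorem 2]{BlockiPlis} and adds $\varepsilon$ times a transverse form; your $\omega_p$ works for this. The sum is transverse for every $\varepsilon>0$, and its square stays negative for small $\varepsilon$ by continuity. That argument proves the lemma on its own, and it is the one you should present.

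The route you put first is not merely unfinished: it cannot work for any $p$. Normalize so that the $\varphi^I$ are orthonormal, and let $\mu_{k,N}$ be the minimum of $m(\psi)$ over unit $\psi\in\Lambda^{k,0}(\C^N)^*$.

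\begin{itemize}
\item For unit $\psi$, the Hermitian form $v\mapsto|\iota_v\psi|^2$ on $\C^N$ has trace $k$. So $|\iota_v\psi|^2\ge k/N$ for some unit $v$.
\item For unit decomposable $\eta$ on $v^\perp$, the form $v^\flat\wedge\eta$ is again unit decomposable, and $\langle\psi,v^\flat\wedge\eta\rangle=\langle\iota_v\psi,\eta\rangle$. Hence $\mu_{k,N}\ge\frac kN\,\mu_{k-1,N-1}$.
\item The unitary normal form $\sum_s\sigma_s f^{2s-1}\wedge f^{2s}$ of a $2$-form gives $\mu_{2,N}\ge 1/\lfloor N/2\rfloor$.
\end{itemize}

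Iterating from $(p,2p)$ down to $(2,p+2)$ gives
\[
m(\psi)\ \ge\ \frac{(p+1)(p+2)}{2\lfloor (p+2)/2\rfloor}\binom{2p}{p}^{-1}|\psi|^2\ \ge\ (p+1)\binom{2p}{p}^{-1}|\psi|^2 .
\]
Hence $|\psi|^2/m(\psi)\le\binom{2p}{p}/(p+1)<\frac12\binom{2p}{p}$ for every $p\ge2$. For $p=3$ this says $m(\psi)\ge\frac14|\psi|^2$, whereas you need $m(\psi)<\frac1{10}|\psi|^2$. For even $p$ the $c^2$-term equals $c^2|\psi\wedge\psi|^2\vol\ge0$, which only makes things worse.

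So no $\psi$, Steiner-type or otherwise, satisfies your ratio condition. Your heuristic about overlaps much smaller than $\binom{2p}{p}^{-1/2}$ is false: averaging $|\langle\psi,\eta\rangle|^2$ over the $U(2p)$-orbit of one decomposable form already gives $m(\psi)\ge|\psi|^2/\binom{2p}{p}$. The requirement also gets harder, not easier, as $p$ grows.

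The ``orthonormal family'' variant is not a narrower ansatz. Every real $(p,p)$-form on $\C^{2p}$ comes from a Hermitian form on $\Lambda^{p,0}$, so that variant just restates the lemma, which is precisely what \cite{BlockiPlis} solves.

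Finally, your proposed induction $p\to p+1$ does not preserve transversality as stated. If $\gamma$ is a $(1,1)$-form on an added $\C^2$ factor, then $\beta\wedge\gamma$ vanishes on every $(p+1)$-plane containing that $\C^2$, so it is not transverse without further correction terms.
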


\begin{proof}
    By \cite[Theorem 2]{BlockiPlis}, for every $p\ge3$, there exists a weakly positive $(p,p)$-form $\alpha_p$ on $\C^{2p}$ such that  $\alpha_p^2$ is a negative volume form, namely $-\alpha_p^2$ is strictly positive.
    Let $\Omega$ be a transverse $(p,p)$-form on $\C^{2p}$.
    If $\Omega^2$ is strictly negative, we have the thesis.
    Otherwise, note that for every $\varepsilon>0$, the $(p,p)$-form $\alpha_{p,\varepsilon}\coloneqq\alpha_p+\varepsilon\Omega$ is transverse, and
    \begin{equation*}
        \alpha_{p,\varepsilon}^2=\alpha_p^2+\varepsilon^2\Omega^2+2\varepsilon\alpha_p\wedge\Omega
        =\alpha_p^2+\varepsilon\pt{\varepsilon\Omega^2+2\alpha_p\wedge\Omega}.
    \end{equation*}
    By assumption, $\varepsilon\Omega^2+2\alpha_p\wedge\Omega$ is a real volume form, and since $\alpha_p^2$ is strictly negative, there is always a small enough $\varepsilon_p$ such that $\alpha_{p,\varepsilon_p}^2$ is strictly negative, so one can choose $\beta_p=\alpha_{p,\varepsilon_p}$ to satisfy the statement.
\end{proof}

\begin{corollary}\label{corSquare3}
For every $p\ge3$, $n\ge2p$, there exists a transverse $(p,p)$-form $\theta$ on $\C^{n}$ such that  $\theta^2$ is not weakly positive.
\end{corollary}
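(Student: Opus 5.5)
The plan is to extend the form $\beta_p$ of Lemma \ref{a^2NOtra} from $\C^{2p}$ to $\C^n$ by adding a transverse correction that vanishes on the distinguished $2p$-dimensional subspace. Non-weak-positivity of $\theta^2$ will then be read off from the restriction of $\theta^2$ to that subspace.

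Write $\C^n=V\oplus W$ with $V\cong\C^{2p}$ spanned by the first $2p$ coordinates and $W$ by the remaining $n-2p$. Let $\pi\colon\C^n\to V$ be the projection along $W$, and let $\beta_p$ be as in Lemma \ref{a^2NOtra}, regarded as a form on $V$. Set $\theta_0\coloneqq\pi^*\beta_p$. It is weakly positive but not transverse: its restriction to a $p$-dimensional subspace $U$ equals the pullback of $\beta_p$ under $\pi|_U$, which degenerates when $\dim\pi(U)<p$.

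To fix this, choose a strictly strongly positive (hence transverse) $(p,p)$-form $\omega$ on $\C^n$, for instance $\omega=\omega_0^p$ with $\omega_0$ the standard K\"ahler form. Define $\theta_\varepsilon\coloneqq\theta_0+\varepsilon\omega$ for $\varepsilon>0$.

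Transversality of $\theta_\varepsilon$ follows from the characterization via restriction to complex subspaces. On each $p$-dimensional $U$, the restriction of $\theta_0$ is a nonnegative multiple of the volume form, since a pullback of a transverse form is weakly positive. The restriction of $\varepsilon\omega$ is strictly positive. Hence the sum is strictly positive.

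Next restrict $\theta_\varepsilon^2$ to $V$. Since $\pi|_V$ is the identity, $\rest{\theta_\varepsilon^2}{V}=\beta_p^2+\varepsilon\bigl(2\beta_p\wedge\rest{\omega}{V}+\varepsilon\rest{\omega}{V}^2\bigr)$. By Lemma \ref{a^2NOtra}, $\beta_p^2$ is a strictly negative volume form on $V$. The bracket is a fixed real volume form on $V$ depending polynomially on $\varepsilon$. Exactly as in the proof of Lemma \ref{a^2NOtra}, for $\varepsilon$ small enough the restriction stays strictly negative.

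Now take $\theta=\theta_\varepsilon$ for such an $\varepsilon$. Restrictions of weakly positive $(2p,2p)$-forms to $2p$-dimensional complex subspaces are positive volume forms; this is the non-strict version of Demailly's criterion, or equivalently the choice $\eta$ equal to a decomposable form cutting out $V$. So $\theta^2$ is not weakly positive.

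The main point to check carefully is the transversality claim for $\theta_0+\varepsilon\omega$. It hinges on the fact that pullbacks of transverse forms under linear maps are weakly positive, which follows directly from the subspace characterization. When $n=2p$ the statement reduces to Lemma \ref{a^2NOtra} itself, with $\theta=\beta_p$.
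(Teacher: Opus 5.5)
Your proof is correct and follows essentially the same route as the paper. Both extend $\beta_p$ from $V\cong\C^{2p}$ to $\C^n$ (the paper's ``push-forward'' is your $\pi^*\beta_p$), add $\varepsilon$ times a strictly strongly positive $(p,p)$-form to restore transversality, and observe that $\rest{\theta_\varepsilon^2}{V}$ stays a negative volume form for small $\varepsilon$; the paper additionally exhibits a subspace $\tilde V$ on which $\theta_\varepsilon^2$ restricts positively, which the statement as written does not require.
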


\begin{proof}
    Notice that, if $n=2p$ the thesis follows by \Cref{a^2NOtra}. Thus, in the following, we suppose that $n > 2p$. 
    
    Let us fix a basis $\pg{\varphi^1,\dots,\varphi^{n}}$ of $\Lambda^{1,0}\pt{\C^n}^*$.
    We consider the immersion of $\C^{2p}$ in the subspace $V$ of $\C^n$ such that $\Lambda^{1,0}V^*$  is spanned by $\pg{\varphi^1,\dots,\varphi^{2p}}$.
    Let $\beta_p$ be the transverse form on $\C^{2p}$ obtained at the end of the previous proof.
    Then, $\beta_p$ can be pushed forward to a weakly positive form in $\Lambda^{p,p}V^*\subset\Lambda^{p,p}\pt{\C^n}^*$, which we will still denote with $\beta_p$.

Define $\theta_\varepsilon:=\beta_p+ \varepsilon \, \eta \in\Lambda^{p,p}\pt{\C^n}^*$, where $\eta$ is a strictly strongly positive form and $\varepsilon > 0$.
In particular, $\eta$ is transverse, thus so is $\theta_\varepsilon$, for all $\varepsilon>0$, because it is the sum of a transverse form and a weakly positive one.

Now, we are left to prove that there exists $\varepsilon_0$ such that $\theta_{\varepsilon_0}^2$ is not weakly positive.
Once again, we use the characterization via restrictions to complex subspaces of $\C^n$.
More precisely, we prove that for $\varepsilon$ small enough, $\rest{\theta_\varepsilon^2}V$ is a  negative volume form on $V$, whereas there exists $ \tilde V\neq V$ of $\C^n$ of dimension $2p$ such that $\rest{\theta_\varepsilon^2}{\tilde V}$ is a positive volume form on $\tilde V$, for every $\varepsilon>0$.

Since $\theta_\varepsilon^2=\beta_p^2+ \varepsilon \pt{\varepsilon \eta^{2} + 2 \beta_{p} \wedge \eta}$, and $\rest{\beta_p^2}{V}$ is a negative volume form by \Cref{a^2NOtra}, it follows that there exists $\varepsilon_0$ small enough so that $\rest{\theta_{\varepsilon_0}^2}{V}$ is a negative volume form.

If we consider the subspace $\tilde V$ of $\C^{n}$ such that $\Lambda^{1,0} \tilde V^{\ast}$ is spanned by $\{\varphi^{2}, \dots, \varphi^{2p+1}\}$, then the restriction $\beta_p^2|_{\tilde V}$ vanishes and so for every $\varepsilon>0$
\begin{equation*}
    \rest{\theta_\varepsilon^2}{\tilde V}
    ={\rest{2\varepsilon\pt{\beta_p\wedge\eta}}{\tilde V}+ \varepsilon^2\rest{\eta^2}{\tilde V}}.
\end{equation*}
Since $\eta$ is strictly strongly positive, so is $\eta^2$.
In particular, since $\eta^2$ is also transverse, $\rest{\eta^2}{\tilde V}$ is a positive volume form on $\tilde V$.
Furthermore, $\beta_p\wedge\eta$ is weakly positive by \cite[Proposition III.1.11]{demailly}, so $\rest{\pt{\beta_p\wedge\eta}}{\tilde V}$ is a positive top form on $\tilde V$, thus allowing to conclude.
\end{proof}

We conclude this section showing one way to construct a transverse $(p+1,p+1)$-form starting from a transverse $(p,p)$-form, for every $p\le n-1$.

\begin{lemma}\label{ambrozia}
    Let $\Omega$ be a transverse $(p,p)$-form on a vector space of complex dimension $n$, and let $k\ge n-p$.
    Then, for every $\varphi^1,\dots,\varphi^k$ linearly independent $(1,0)$-forms,
    \begin{equation*}
        \widetilde{\Omega}=\Omega\wedge i\,\pt{\varphi^1\wedge\overline{\varphi^1}+\dots+\varphi^k\wedge\overline{\varphi^k}}
    \end{equation*}
    is a transverse $(p+1,p+1)$-form.
\end{lemma}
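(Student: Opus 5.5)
We use the restriction criterion for transversality recalled after the definition (\cite[Criterion III.1.6]{demailly}): $\widetilde\Omega$ is transverse if and only if, for every complex subspace $W$ of dimension $p+1$, the restriction $\rest{\widetilde\Omega}{W}$ is a strictly positive volume form on $W$. So we fix such a $W$. We set $\omega\coloneqq i\sum_{j=1}^k\varphi^j\wedge\overline{\varphi^j}$, so that $\rest{\widetilde\Omega}{W}=\rest{\Omega}{W}\wedge\rest{\omega}{W}$.

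First, $\rest\omega W$ is a semipositive $(1,1)$-form on $W$. Its kernel is $K\coloneqq W\cap\bigcap_{j}\ker\varphi^j$. The common kernel of the $\varphi^j$ has dimension $n-k\le p$, since the $\varphi^j$ are linearly independent. Hence $\dim K\le p$, and therefore $\rest\omega W$ has rank $r=p+1-\dim K\ge1$.

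Next, we diagonalize $\rest\omega W$. We choose a basis $e_1,\dots,e_{p+1}$ of $W$ with dual basis $\zeta^1,\dots,\zeta^{p+1}$ such that
\[
\rest\omega W=i\sum_{s=1}^{r}\lambda_s\,\zeta^s\wedge\overline{\zeta^s},\qquad \lambda_s>0 .
\]
Wedging with $\rest\Omega W$, only the components of $\rest\Omega W$ along $\widehat{\zeta^s}\wedge\widehat{\overline{\zeta^s}}$ survive. Up to the normalization $i^{p^2}$, these are the values of $\Omega$ on the hyperplanes $W_s\coloneqq\ker\rest{\zeta^s}{W}$, each of dimension $p$. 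Up to positive constants, we get
\[
\rest{\widetilde\Omega}{W}=\sum_{s=1}^{r}\lambda_s\,\rest{\Omega}{W_s}\wedge i\,\zeta^s\wedge\overline{\zeta^s}.
\]
By transversality of $\Omega$, each $\rest\Omega{W_s}$ is a strictly positive volume form on $W_s$. Moreover, $\rest{\Omega}{W_s}\wedge i\zeta^s\wedge\overline{\zeta^s}$ is a positive multiple of the volume form of $W$. Since $r\ge1$ and every $\lambda_s>0$, the sum is strictly positive. This holds for every $W$, so $\widetilde\Omega$ is transverse.

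The main obstacle is the bookkeeping in the second step. One must check that the sign conventions, namely the factors $i^{p^2}$ and the reordering of $\zeta^s\wedge\overline{\zeta^s}$ past a $(p,p)$-form, make each term a positive multiple of the volume form. The cleanest way is to observe that, in the chosen basis, $\rest\Omega W\wedge i\zeta^s\wedge\overline{\zeta^s}$ equals the coefficient of $\rest\Omega W$ on $\bigwedge_{t\ne s}i\zeta^t\wedge\overline{\zeta^t}$, times the standard volume form. This coefficient is exactly the value of $\Omega$ restricted to $W_s$, evaluated against the standard positive volume form there.

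The dimension count $\dim K\le p$, which is where the hypothesis $k\ge n-p$ enters, is what guarantees that at least one term is present.
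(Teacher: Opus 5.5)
Your proof is correct and is essentially the paper's argument read through the restriction criterion. The paper pairs $\widetilde\Omega$ with $\eta\wedge\overline{\eta}$ for a decomposable $(n-p-1,0)$-form $\eta$, whose common kernel is your $W$, and splits it into the $k$ non-negative terms $\Omega\wedge i\,\varphi^j\wedge\overline{\varphi^j}\wedge\eta\wedge\overline{\eta}$. It then uses linear independence of the $\varphi^j$ to rule out $\varphi^j\wedge\eta=0$ for all $j$, which is exactly your bound $\dim K\le p$; the diagonalization of $\rest{\omega}{W}$ is harmless but unnecessary, since each $j$ with $\rest{\varphi^j}{W}\neq0$ already gives a strictly positive term on its own.
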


\begin{proof}
    We have to show that for all non-zero decomposable $(n-p-1,0)$-forms $ \eta$, the $(n,n)$-form 
    \begin{equation}\label{topolino}
        i^{(n-p-1)^2}\widetilde{\Omega}\wedge\eta \wedge \overline{\eta} = i^{(n-p-1)^2}\sum_{j=1}^k\Omega\wedge i\,\varphi^{j} \wedge \overline{\varphi^j} \wedge \eta \wedge \overline{\eta},
    \end{equation}
    is strictly positive.
    Since $\Omega$ is transverse, and $\varphi^j\wedge\eta$ is decomposable, for all $j=1,\dots,k$, the last summand in \eqref{topolino} is positive, and it is zero if and only if $\varphi^{j} \wedge \eta=0$ for every $j=1,\dots,k$.
    Let $\eta_1,\dots,\eta_{n-p-1}$ be $(1,0)$-forms such that $\eta=\eta_1\wedge\dots\wedge\eta_{n-p-1}$.
    For every $j=1,\dots,k$, by $\varphi^{j} \wedge \eta=0$ it follows that $\varphi^{j}$ is in the vector space spanned by $\eta_1,\dots,\eta_{n-p-1}$.
    However, this space has dimension $n-p-1<k$, in contradiction with the $\varphi^j$ being linearly independent.
    It follows that the quantity in \eqref{topolino} is positive, and never zero, unless $\eta$ is.
\end{proof}

\section{\texorpdfstring{$p$}{p}-Kähler complex manifolds}\label{secCpxMfd}

In the first part of this section, we will give a generalization of the stability of $p$-K\"ahlerianity under products of compact complex manifolds.
We will then move to the setting of compact quotients of Lie groups, and generalize a known dimensionality reduction argument for the existence of $p$-K\"ahler structures, which will be useful for proofs by induction.
The last part of the section is dedicated to the proof of the Alessandrini-Bassanelli conjecture on Lie algebras equipped with a nilpotent complex structure. 

\subsection{Products of compact \texorpdfstring{$p$}{p}-Kähler manifolds}
Given a pair of $p$-K\"ahler manifolds, the problem of their product admitting $q$-K\"ahler structures, for some $q$, is solved just assuming the Alessandrini-Bassanelli conjecture on both factors, together with further positivity assumptions.
When $p$ is high enough, we show that the further positivity assumptions are not needed for this construction.

\begin{theorem}\label{superprodotti}
    Let $M^m,N^n$ be compact complex manifolds, with $m,n\ge2$, and assume that $M$ is $(m-2)$-K\"ahler and balanced, and $N$ is $(n-2)$-K\"ahler and balanced.
    Then, $M\times N$ is $(m+n-2)$-K\"ahler and balanced.
\end{theorem}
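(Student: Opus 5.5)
The plan is to write down an explicit closed $(m+n-2,m+n-2)$-form on $M\times N$, built from the structures on the two factors, and to check transversality directly using the bidegree splitting of the cotangent space of the product. Denote by $\pi_M,\pi_N$ the projections; we suppress pullbacks below.

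Let $\Omega_M$ be an $(m-2)$-K\"ahler form on $M$ and $\omega_M$ a balanced metric on $M$. Then $d\omega_M^{m-1}=0$, and the volume form $\omega_M^m$ is trivially closed. Define $\Omega_N,\omega_N$ analogously on $N$. For $m=2$ we interpret $\Omega_M$ as the constant $1$, and similarly for $N$. Consider
\begin{equation*}
\Theta=\Omega_M\wedge\omega_N^{\,n}+\omega_M^{\,m-1}\wedge\omega_N^{\,n-1}+\omega_M^{\,m}\wedge\Omega_N .
\end{equation*}
Each summand is a product of closed forms pulled back from the factors, so $d\Theta=0$. Each summand also has bidegree $(m+n-2,m+n-2)$. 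For the balanced part of the statement, we take $\omega=\omega_M+\omega_N$. Expanding $\omega^{m+n-1}$, only the terms $\omega_M^{m-1}\wedge\omega_N^{n}$ and $\omega_M^{m}\wedge\omega_N^{n-1}$ survive for degree reasons, and both are closed. Hence $\omega$ is balanced on $M\times N$.

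For transversality, here $\tilde p=2$. We must show that $\Theta\wedge\eta\wedge\overline\eta$ is strictly positive for every nonzero decomposable $(2,0)$-covector $\eta=\eta_1\wedge\eta_2$ at a point. Write $\eta_j=a_j+b_j$ with $a_j\in\Lambda^{1,0}T^*M$ and $b_j\in\Lambda^{1,0}T^*N$. Then
\begin{equation*}
\eta=\alpha+\beta+\gamma,\qquad \alpha=a_1\wedge a_2,\quad \beta=a_1\wedge b_2+b_1\wedge a_2,\quad \gamma=b_1\wedge b_2 .
\end{equation*}
The key step is the bidegree bookkeeping. A form of type $(r,s)$ on $M$ vanishes when $r>m$ or $s>m$, and likewise on $N$.

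First summand: wedging $\Omega_M\wedge\omega_N^n$, which is full in the $N$-directions, kills every piece of $\eta\wedge\overline\eta$ that contains an $N$-covector. It therefore picks out only $\Omega_M\wedge\omega_N^n\wedge\alpha\wedge\overline\alpha$. Since $\alpha$ is decomposable and $\Omega_M$ is transverse, this term is $\ge0$, and it is $>0$ whenever $\alpha\neq0$.

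Third summand: by symmetry it is $\ge0$, and it is $>0$ whenever $\gamma\neq0$.

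Middle summand: $\omega_M^{m-1}$ leaves room for exactly a $(1,1)$-part in the $M$-directions and $\omega_N^{n-1}$ for exactly a $(1,1)$-part in the $N$-directions. Hence $\alpha$, $\gamma$ and all cross terms drop out, and only $\omega_M^{m-1}\wedge\omega_N^{n-1}\wedge i^4\beta\wedge\overline\beta$ remains. Expanding $\beta=\sum c_{jk}e^j\wedge f^k$ in unitary coframes $e^j$ of $M$ and $f^k$ of $N$, this term equals a positive constant times $\sum|c_{jk}|^2$ times the volume. It is therefore $\ge0$, with equality iff $\beta=0$.

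Since $\eta\neq0$, at least one of $\alpha,\beta,\gamma$ is nonzero, so the sum is strictly positive and $\Theta$ is transverse. We can take all coefficients in $\Theta$ equal to $1$.

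The main obstacle I expect is not conceptual but careful sign and normalization control. One must check that the factors $i^{\tilde p^2}$ and the reordering of $(1,0)$- and $(0,1)$-covectors between the two factors produce the same positive sign in each of the three terms, and in particular that the middle term is a genuinely positive multiple of $|\beta|^2$. I would verify this first on the model $\C^m\times\C^n$ with standard forms, where the computation reduces to the strong positivity of $\omega_M^{m-1}\wedge\omega_N^{n-1}$ and to the standard identity $i\,e\wedge\bar e\wedge i\,f\wedge\bar f$ for decomposable terms.
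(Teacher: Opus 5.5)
Your proof is correct and is essentially the paper's: the paper uses the same closed form $\Omega_{m-2}\wedge\vol_N+\Omega_{m-1}\wedge\tilde\Omega_{n-1}+\vol_M\wedge\tilde\Omega_{n-2}$, of which yours is the special case $\Omega_{m-1}=\omega_M^{m-1}$, $\tilde\Omega_{n-1}=\omega_N^{n-1}$. It also proves transversality by noting that each summand wedged with $\eta\wedge\bar\eta$ is nonnegative. The only difference is bookkeeping: the paper argues that vanishing of the outer terms forces $\eta=\alpha_0\wedge\beta_0$ and then applies the middle term, whereas your splitting $\eta=\alpha+\beta+\gamma$ shows directly that the middle summand is a positive multiple of $\sum|c_{jk}|^2$, which avoids that reduction.
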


\begin{proof}
We only need to prove that $M\times N$ is $(m+n-2)$-K\"ahler.
    Let $\Omega_{p}$ be $p$-K\"ahler structures on $M$ for $p=m-2,m-1$ and $\tilde \Omega_{q}$ be $q$-K\"ahler structures on $N$ for $q = n-2, n-1$.
    The $(m+n-2,m+n-2)$-form 
    \begin{equation}\label{Psi}
        \Psi: =  \Omega_{m-2}\wedge\vol_{N} +\Omega_{m-1}\wedge\tilde\Omega_{n-1}+\vol_M\wedge\,\tilde\Omega_{n-2}
    \end{equation}
    is closed and weakly positive, because $\Omega_{m-1}$ and $\tilde\Omega_{n-1}$ are strongly positive, for dimensionality reasons.
    To prove that $\Psi$ is transverse, assume $\Psi\wedge\eta\wedge\bar\eta=0$, for $\eta=\mu_1\wedge\mu_2$, $\mu_1,\mu_2$ $(1,0)$-forms on $M\times N$.
    Since all the three summands in \eqref{Psi} are weakly positive, their wedge product with $\eta\wedge\bar\eta$ is non-negative, so their sum vanishes if and only if they all do.
    Thus, writing $\mu_j=\alpha_j+\beta_j$, with $\alpha_j$ $(1,0)$-forms on $M$ and $\beta_j$  $(1,0)$-forms on $N$, for $j=1,2$, $\Psi\wedge\eta\wedge\bar\eta=0$ implies
    \begin{equation}\label{wedgissimi}\begin{aligned}
        0=&\Omega_{m-2}\wedge\vol_{N}\wedge\,\eta\wedge\bar\eta
        =\alpha_1\wedge\alpha_2\wedge\bar\alpha_1\wedge\bar\alpha_2\wedge\Omega_{m-2}\wedge\vol_{N}, \\
        0=&\vol_M\wedge\,\tilde\Omega_{m-2}
        =\vol_M\wedge\,\tilde\Omega_{m-2}\wedge\beta_1\wedge\beta_2\wedge\bar\beta_1\wedge\bar\beta_2,\\
        0=&\Omega_{m-1}\wedge\tilde\Omega_{n-1}\wedge\eta\wedge\bar\eta.
    \end{aligned}
    \end{equation}
From the first two lines, since $\Omega_{m-2}$ and $\tilde\Omega_{n-2}$ are transverse, we conclude $\alpha_1\wedge\alpha_2=0$ and $\beta_1\wedge\beta_2=0$.
It is easy to check that this implies $\eta=\alpha_0\wedge\beta_0$, for some $\alpha_0$ $(1,0)$-form on $M$ and $\beta_0$  $(1,0)$-form on $N$.
The last line in \eqref{wedgissimi} now reads
\begin{equation*}
0=\Omega_{m-1}\wedge\tilde\Omega_{n-1}\wedge\alpha_0\wedge\beta_0\wedge\bar\alpha_0\wedge\bar\beta_0,
\end{equation*}
which is equivalent to 
\begin{equation*}
\Omega_{m-1}\wedge\alpha_0\wedge\bar\alpha_0=0, \quad \text{or} \quad 
\tilde\Omega_{n-1}\wedge\beta_0\wedge\bar\beta_0=0.
\end{equation*}
Being both $\Omega_{m-1}$ and $\tilde\Omega_{n-1}$ transverse, it follows that $\alpha_0=0$ or $\beta_0=0$, so $\eta=0$, as wanted.
\end{proof}

\subsection{Quasi-nilpotent complex structures}
We start by recalling the definition of quasi-nilpotent complex structures. 
Note that this definition was introduced in \cite{LPhD} in the context of nilpotent Lie algebras, but the definition can be extended to any type of Lie algebras, with no further nilpotency, or solvability assumption.
For similar notions of solvability and semisimiplicity of complex structures on Lie algebras, see \cite{FuGe26}.

\begin{definition}
A complex structure $J$ on a Lie algebra $\mathfrak g$ is called \textit{quasi-nilpotent} if the center of $\mathfrak g$ has a non-trivial $J$-invariant subspace.
\end{definition}

Let $(\g,J)$ be a nilpotent Lie algebra equipped with a quasi-nilpotent complex structure and denote by $\ab $ a $2$-dimensional $J$-invariant subspace of the center $\mathfrak z$.
Since $\ab$ is an ideal of $\mathfrak g$, the quotient $\mathfrak q=\mathfrak g/\mathfrak a$ is well defined, and being $\ab$ also $J$-invariant, the complex structure $J$ descends to the quotient, defining the complex structure $J_\q$ on $\q$.
    As a vector space, $\mathfrak q$ is isomorphic to a $J$-invariant subspace of $\mathfrak g$, and under this identification, $J_\q$ is the restriction of $J$.
For this reason, in the following we will still denote $J_\q$ with $J$.

\begin{definition}[{\cite{LPhD}}]
    If $\mathfrak k$ is a nilpotent Lie algebra of real dimension $2(n-1)$ endowed with a complex structure $K$ such that $(\mathfrak k, K)$ is isomorphic to $(\mathfrak g / \ab , J)$, we call the pair $(\mathfrak g, J)$ an \textit{$\ab $-extension} of $(\mathfrak k, K)$.
\end{definition}

We recall a known obstruction to the existence of $p$-K\"ahler structures on nilpotent Lie algebras equipped with a quasi nilpotent complex structure. 

\begin{proposition}[{\cite[Proposition 3.3]{fm}}]\label{bext}
    Let $\g$ be a nilpotent Lie algebra of real dimension $2n \ge 6$ equipped with a quasi-nilpotent complex structure $J$. 
    If $(\g,J)$ admits a $p$-K\"ahler structure, for some $2\le p\le n$, then it is the $\ab $-extension of a $(p-1)$-K\"ahler nilpotent Lie algebra.
\end{proposition}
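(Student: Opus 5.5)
The plan is to realise $\q=\g/\ab$ as the ``base'' of a fibration with central fibre $\ab$, and to extract from a $p$-K\"ahler structure $\Omega$ on $(\g,J)$ its component along the fibre volume. This is the same mechanism as in Lemma \ref{Lemma veramente molto importante con cui Asia si e portata a casa la giornata}, with $\kk=\ab$ and trivial action; here, however, $\ab$ is a central ideal rather than a semidirect factor, so the argument has to be redone. Since $J$ is quasi-nilpotent, we fix a $2$-dimensional $J$-invariant subspace $\ab\subseteq\mathfrak z$. Then $\q=\g/\ab$ is a nilpotent Lie algebra (a quotient of a nilpotent one), and $J$ descends to it, so $(\g,J)$ is by definition an $\ab$-extension of $(\q,J)$. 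The only thing to prove is therefore that $(\q,J)$ is $(p-1)$-K\"ahler.

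First we choose adapted coframes. Pick $(1,0)$-forms $\varphi^1,\dots,\varphi^{n-1}$ on $\g$ vanishing on $\ab$; these are exactly the pullbacks of a basis of $\Lambda^{1,0}\q^*$. Complete them with $\varphi^n$ so that $\pg{\varphi^1,\dots,\varphi^n}$ is a basis of $\Lambda^{1,0}\g^*$. Because $\ab$ is central, the formula $d\gamma(X,Y)=-\gamma([X,Y])$ shows that $d\gamma(X,\cdot)=0$ whenever $X\in\ab$. Hence both $d\varphi^j$ for $j<n$ and $d\varphi^n$ lie in $\Lambda^2\q^*_\C$, where $\q^*$ is identified with the annihilator of $\ab$. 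Moreover, for $j<n$ the form $d\varphi^j$ is just $d_\q\varphi^j$. In particular $d$ preserves $\Lambda\q^*_\C$ and restricts to $d_\q$ there, and $d(\varphi^n\wedge\overline{\varphi^n})=d\varphi^n\wedge\overline{\varphi^n}-\varphi^n\wedge d\overline{\varphi^n}$ contains no term involving both $\varphi^n$ and $\overline{\varphi^n}$.

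Next, decompose
\begin{equation*}
\Omega=\Psi\wedge i\,\varphi^n\wedge\overline{\varphi^n}+\alpha\wedge\varphi^n+\beta\wedge\overline{\varphi^n}+\Theta,
\end{equation*}
with $\Psi\in\Lambda^{p-1,p-1}\q^*$ real, and $\alpha,\beta,\Theta$ forms in $\Lambda\q^*_\C$; here $p\ge2$ is what makes $\Psi$ a form of positive degree.

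\textbf{Transversality.} Let $\eta\neq0$ be any decomposable $(n-p,0)$-form on $\q$, pulled back to $\g$. In $i^{(n-p)^2}\Omega\wedge\eta\wedge\bar\eta$ only the $\Psi$-term can reach top degree, since every top form must contain $\varphi^n\wedge\overline{\varphi^n}$. The result is $i^{(n-p)^2}\Psi\wedge\eta\wedge\bar\eta\wedge i\,\varphi^n\wedge\overline{\varphi^n}$, which is strictly positive by transversality of $\Omega$. Since $n-p=(n-1)-(p-1)$, this says precisely that $\Psi$ is transverse on $\q$.

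\textbf{Closedness.} We take the component of $0=d\Omega$ containing $\varphi^n\wedge\overline{\varphi^n}$. By the structure equations above, $d(\alpha\wedge\varphi^n)$, $d(\beta\wedge\overline{\varphi^n})$ and $d\Theta$ contain at most one of $\varphi^n,\overline{\varphi^n}$. The same holds for $\Psi\wedge d(\varphi^n\wedge\overline{\varphi^n})$. So this component equals $d_\q\Psi\wedge i\,\varphi^n\wedge\overline{\varphi^n}$, forcing $d_\q\Psi=0$.

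Hence $\Psi$ is a $(p-1)$-K\"ahler structure on $(\q,J)$. The step needing the most care is the bookkeeping in the closedness computation: one has to verify that centrality of $\ab$ really makes $d\varphi^n$ free of $\varphi^n,\overline{\varphi^n}$ (this is the analogue of the unimodularity/traceless hypotheses in Lemma \ref{Lemma veramente molto importante con cui Asia si e portata a casa la giornata}). Once this is checked, the remaining steps are degree counting.
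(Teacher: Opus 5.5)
Your proof is correct and is essentially the paper's argument for Proposition \ref{enrico} (the paper cites \cite{fm} for this statement and proves that generalization; your case is $k=1$). Your $\Psi$ is, up to normalization, the double contraction $\pi_*\pt{i\,\iota_{z_1}\iota_{\overline{z_1}}\Omega}$ with $z_1\in\ab^{1,0}$ and $\varphi^n(z_1)=1$, and your closedness bookkeeping is the coframe form of identity \eqref{crackers}, which rests on the same centrality of $\ab$; the only difference is that you check transversality of $\Psi$ directly by the top-degree count, where the paper cites \cite[Theorem 3.4]{FagMai}.
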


We can generalize \Cref{bext} to a larger class of Lie algebras, maintaining the same assumption on the complex structure. 

\begin{proposition}\label{enrico}
    Assume a Lie algebra $\mathfrak g$ is endowed with a quasi-nilpotent complex structure $J$ and let $\mathfrak a$ be a $J$-invariant subspace  of the center, of real dimension $2k\ge2$.
    If $(\mathfrak g,J)$ is $p$-K\"ahler, for some $p>k$, then $(\mathfrak q:=\mathfrak g/\mathfrak a,J)$ is $(p-k)$-K\"ahler.
\end{proposition}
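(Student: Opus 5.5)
Since $\ab$ is central and $J$-invariant, I would choose a $J$-invariant complement $\q$ of $\ab$ in $\g$ and identify $\g^*=\q^*\oplus\ab^*$, so that $\Lambda\g^*=\Lambda\q^*\otimes\Lambda\ab^*$. Centrality gives two facts about the Chevalley--Eilenberg differential. First, $d(\Lambda\q^*)\subset\Lambda\q^*$, and on this subalgebra $d$ coincides with the differential $d_\q$ of the quotient Lie algebra $\q=\g/\ab$: for $X,Y\in\q$ the $\q$-component of $[X,Y]_\g$ is $[X,Y]_\q$, and brackets with $\ab$ vanish. Second, for $\beta\in\ab^*$, $d\beta$ lies in $\Lambda^2\q^*$, because $\beta([\cdot,\cdot])$ only sees brackets of elements of $\q$. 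So the picture is the one in Lemma \ref{Lemma veramente molto importante con cui Asia si e portata a casa la giornata}, but with the roles reversed: here the forms on the quotient $\q$ form a subcomplex, and the forms on $\ab$ are not closed.

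Given a $p$-K\"ahler structure $\Omega$ on $(\g,J)$, I would split it by degree in $\ab^*$:
\[
\Omega=\sum_{l=0}^{N}\Omega_l,\qquad \Omega_l\in\Lambda^{2(p-k)+l}\q^*\wedge\Lambda^{2k-l}\ab^*.
\]
By type, the top piece is $\Omega_0=\Psi\wedge\vol_\ab$ with $\Psi\in\Lambda^{p-k,p-k}\q^*$ real (after normalising $\vol_\ab\in\Lambda^{k,k}\ab^*$). Transversality of $\Psi$ is pointwise linear algebra and does not involve the bracket, so, exactly as in the proof of Lemma \ref{Lemma veramente molto importante con cui Asia si e portata a casa la giornata}, iterating \cite[Theorem 3.4]{FagMai} shows that $\Psi$ is transverse on $(\q,J)$. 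Equivalently, for decomposable $\eta\in\Lambda^{\tilde p,0}\q^*$, the top form $\Omega\wedge\eta\wedge\bar\eta$ only picks up the $\Omega_0$ component, since $\eta\wedge\bar\eta$ contains no $\ab^*$ directions.

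It remains to show $d_\q\Psi=0$. I would take the component of $0=d\Omega$ of top degree $2k$ in $\ab^*$. Any $\Omega_l$ with $l\ge1$ has $\ab^*$-degree at most $2k-1$. By the two facts above, $d$ never raises the $\ab^*$-degree: applied to a factor in $\Lambda\q^*$ it preserves the degree, and applied to a factor in $\ab^*$ it lowers it by one. Hence the $\ab^*$-degree-$2k$ component of $d\Omega$ is just $d(\Psi\wedge\vol_\ab)$ projected to that degree. Since $d\vol_\ab$ lands in $\ab^*$-degree $2k-1$, this projection is $d_\q\Psi\wedge\vol_\ab$. So $d_\q\Psi\wedge\vol_\ab=0$, and therefore $d_\q\Psi=0$.

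This is simpler than Lemma \ref{Lemma veramente molto importante con cui Asia si e portata a casa la giornata}: no unimodularity or trace condition is needed, because the filtration by $\ab^*$-degree is preserved in the favourable direction. Thus $\Psi$ is a $(p-k)$-K\"ahler structure on $(\q,J)$. The main point to check carefully is the transversality of $\Psi$, namely that a strictly positive pairing with decomposable $(\tilde p,0)$-forms on $\q$ is inherited from $\Omega$. This is the linear algebra already used in the earlier lemma, and it works because $\dim_\C\q-(p-k)=n-p=\tilde p$.
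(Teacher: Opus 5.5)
Your proof is correct, and the form you build is the paper's. Up to a nonzero constant, $\Psi$ is the iterated contraction $\iota_{z_1}\iota_{\bar z_1}\cdots\iota_{z_k}\iota_{\bar z_k}\Omega$ along a basis of $\ab^{1,0}$: every $\Omega_l$ with $l\ge1$ has $\ab^*$-degree $2k-l<2k$ and is killed by it. So $\Psi$ does not depend on the chosen complement.

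The difference is in the bookkeeping. The paper proceeds as follows:
\begin{itemize}
\item it reduces to $k=1$ by quotienting one $2$-dimensional central piece at a time;
\item it proves closedness via \eqref{crackers}, i.e.\ the double contraction by central vectors commutes with $d$ because all brackets with $z_1,\bar z_1$ vanish;
\item it quotes \cite{FagMai} for transversality.
\end{itemize}
You instead treat all $k$ at once. You get closedness from the fact that $d$ never raises $\ab^*$-degree, since $d\ab^*\subset\Lambda^2\q^*$. Hence only $d_\q\Psi\wedge\vol_\ab$ survives in degree $2k$.

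The paper's version is intrinsic, with no choice of complement. Yours avoids the induction and follows the template of \Cref{Lemma veramente molto importante con cui Asia si e portata a casa la giornata}. It also shows exactly why the unimodularity and trace hypotheses needed there are superfluous here.

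Your ``equivalently'' remark is actually a complete, self-contained proof of transversality. By a degree count, $\Omega\wedge\eta\wedge\bar\eta=\Psi\wedge\eta\wedge\bar\eta\wedge\vol_\ab$ for every decomposable $\eta\in\Lambda^{\tilde p,0}\q^*$. So transversality of $\Psi$ follows directly from that of $\Omega$, without \cite{FagMai}.

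One small correction to your framing: this is not a role reversal relative to that lemma. There too, the forms on the quotient $\h$ form a subcomplex. The real difference is where $d\beta$ lands for $\beta$ dual to the ideal. In the lemma it can gain degree in the ideal directions (through $[\kk,\kk]$) or keep it (through the action $\rho$). Here it lands entirely in $\Lambda^2\q^*$.
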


\begin{proof}
It is enough to prove the statement for $k=1$.
Indeed, we can always write $\mathfrak a=\mathfrak a_1\times\dots\times\mathfrak a_k$, with $\mathfrak a_j$ a $J$-invariant subspace  of the center of $\g$, of real dimension $2$.
When $k\ge 2$, we can define recursively
    \begin{equation*}
        \mathfrak q_1=\mathfrak g/\mathfrak a_1,\quad
        \mathfrak q_j=\mathfrak q_{j-1}/\mathfrak a_{j},\quad j=2,\dots,k,
    \end{equation*}
    with $\mathfrak q\cong\mathfrak q_k$.

    Assume then that $k=1$, let $\Omega$ be a $p$-K\"ahler structure on $(\mathfrak g,J)$ and $\pi\colon\g\to\q$ be the projection to the quotient.
    For every $0\neq z_1\in\mathfrak a^{1,0}$, the form $\widetilde\Omega=\pi_*\pt{i\,\iota_{z_1}\iota_{{\overline z_1}}\,\Omega}$ is well defined as an element of $\Lambda^{p-1,p-1}\mathfrak q_\C^*$.
    The upshot is that $\widetilde\Omega$ is transverse and closed on $\mathfrak q$.
    The transversality follows by \cite[Theorem 3.4]{FagMai}.
    Moreover, we claim that 
    \begin{equation}\label{crackers}
        d{\widetilde\Omega}=\pi_*\pt{i\,\iota_{z_1}\iota_{{\overline z_1}}\,d\Omega},
    \end{equation}
    from which the thesis follows, as the right hand side is zero.
    For every $v_1,\dots,v_{2p-1}\in\g_\C$,
    \begin{equation*}\begin{aligned}
    \iota_{z_1}\iota_{{\overline z_1}}&\,d\Omega\pt{v_1,\dots,v_{2p-1}}
    =d\Omega\pt{\bar z_1,{z}_1,v_1,\dots,v_{2p-1}}     \\
    &=\sum_{1\le j<k\le 2p-1}(-1)^{j+k}\,\Omega\pt{[v_j,v_k],\bar z_1,{z}_1,v_1,\dots \hat{v_j},\dots,\hat{v_k},\dots,v_{2p-1}} ,
    \end{aligned}
    \end{equation*}
    where the last equality holds because $z_1,\overline z_1$ are in the center of $\mathfrak g_\C$.
    \Cref{crackers} then follows by linearity of the double contraction $\iota_{z_1}\iota_{{\overline z_1}}$, and because $\pi$ commutes with the bracket. 
    \end{proof}

In the same setting, when the quotient $\g/\ab$ is K\"ahler, we can prove that the Alessandrini-Bassanelli conjecture holds, for bidegree high enough.

\begin{proposition}\label{Lie algebra con particolari ipotesi}
 Let $\mathfrak g$ be a Lie algebra of real dimension $2n$, with a quasi-nilpotent complex structure $J$.
 Assume $\mathfrak a$ is a $J$-invariant subspace of real dimension $2k$ of the center of $\mathfrak g$, such that $(\mathfrak q=\mathfrak g/\mathfrak a,J)$ is K\"ahler.
If $(\mathfrak g,J)$ is $p$-K\"ahler, for some $p\ge k$, then it is also $(p+1)$-K\"ahler.
\end{proposition}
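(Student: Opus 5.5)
The plan is to multiply a given $p$-K\"ahler structure on $(\g,J)$ by the pullback of a K\"ahler form on the quotient, and to check transversality of the product with Lemma \ref{ambrozia}.

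First I will build a closed semipositive $(1,1)$-form on $\g$. Let $\Omega$ be a $p$-K\"ahler structure on $(\g,J)$. Let $\omega_\q$ be a K\"ahler form on $(\q,J)$, where $\q=\g/\ab$ has complex dimension $n-k$. Let $\pi\colon\g\to\q$ be the projection, and set $\omega:=\pi^*\omega_\q\in\Lambda^{1,1}\g^*_\C$. Since $\ab$ is an ideal, $\pi$ is a Lie algebra homomorphism, so $\pi^*$ commutes with the Chevalley--Eilenberg differentials. Hence $d\omega=\pi^*d_\q\omega_\q=0$. Because $\pi$ is $J$-linear, $\omega$ is a real $(1,1)$-form.

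Next I will put $\omega$ in diagonal form. Choose a basis $\pg{\psi^1,\dots,\psi^{n-k}}$ of $\Lambda^{1,0}\q^*$ that is unitary for the Hermitian metric defined by $\omega_\q$, so that $\omega_\q=i\sum_{j=1}^{n-k}\psi^j\wedge\overline{\psi^j}$. Set $\varphi^j:=\pi^*\psi^j$. These are $(1,0)$-forms on $\g$, and they are linearly independent because $\pi^*$ is injective on $\q^*$. We then have
\begin{equation*}
\omega=i\,\pt{\varphi^1\wedge\overline{\varphi^1}+\dots+\varphi^{n-k}\wedge\overline{\varphi^{n-k}}}.
\end{equation*}

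Then I will apply Lemma \ref{ambrozia} and define $\widetilde\Omega:=\Omega\wedge\omega$. It is closed, being a product of closed forms. For transversality, Lemma \ref{ambrozia} on $\g$ (complex dimension $n$, bidegree $p$) needs at least $n-p$ linearly independent $(1,0)$-forms. Here we have $n-k$ of them, and $n-k\ge n-p$ holds exactly because $p\ge k$. So $\widetilde\Omega$ is a transverse $(p+1,p+1)$-form, hence a $(p+1)$-K\"ahler structure on $(\g,J)$.

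The only point that needs care is that the statement implicitly requires $p+1\le n$. For $p=n$ there is nothing to prove, since the bidegree would exceed the dimension. Beyond that, the argument is routine once one sees that the hypothesis $p\ge k$ is precisely the numerical condition in Lemma \ref{ambrozia}. So I do not expect a genuine obstacle; the main thing to verify is that the pulled-back coframe stays linearly independent, which follows from the surjectivity of $\pi$.
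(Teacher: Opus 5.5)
Your proof is correct and takes essentially the same route as the paper: pull back a K\"ahler form on $\q=\g/\ab$ via $\pi$, wedge it with $\Omega$, and apply Lemma \ref{ambrozia} using $\dim_\C\q=n-k\ge n-p$. The explicit unitary diagonalization of $\omega_\q$ and the check that the pulled-back $(1,0)$-forms stay linearly independent fill in details that the paper leaves implicit when it invokes the lemma.
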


\begin{proof}
    Let $\Omega$ be a $p$-K\"ahler structure on $(\mathfrak g,J)$ and $\pi\colon\g\to\q$ be the projection to the quotient. Consider $\omega = \pi^{\ast} \omega_K \in \Lambda^{1,1}(\mathfrak g^{\ast}_{\C})$, where $\omega_K$ is a K\"ahler form on $(\mathfrak q,J)$. 
Then, $\Omega\wedge\omega$ is transverse by \Cref{ambrozia}, because the complex dimension of $(\mathfrak q,J)$ is $n-k\ge n-p$, and it is closed because both $\Omega$ and $\omega_K$ are.
Thus, it is a $(p+1)$-K\"ahler structure.
\end{proof}

As a direct consequence, the Alessandrini–Bassanelli conjecture holds for some of the Lie algebras considered above.
\begin{corollary}
    The Alessandrini-Bassanelli conjecture holds on Lie algebras satisfying the assumptions of \Cref{Lie algebra con particolari ipotesi}, when $k=1,2$. 
\end{corollary}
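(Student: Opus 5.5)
The argument reduces to Proposition \ref{Lie algebra con particolari ipotesi}, applied once for each $q\ge p$. Since $(\mathfrak g,J)$ is $p$-K\"ahler with $p\ge k$, that proposition gives a $(p+1)$-K\"ahler structure. The hypotheses of the proposition involve only $\mathfrak a$ and the K\"ahler quotient $\mathfrak q$, and not the particular $\Omega$, so we can iterate. The inequality $p+1\ge k$ still holds, so the proposition applies again and yields $(p+2)$-K\"ahler structures. Continuing inductively gives $q$-K\"ahler structures for every $q$ with $p\le q\le n$. Here $q=n$ is trivial, since a positive volume form is always closed.

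This induction proves the conjecture for every $p\ge k$ and every $k$. So for $k=1,2$ the only remaining case is $p<k$, which forces $k=2$ and $p=1$. In this case we must show that if $(\mathfrak g,J)$ is K\"ahler, then it is $q$-K\"ahler for all $q\ge1$. This holds on any complex vector space, without the quotient assumption. If $\omega$ is a K\"ahler form, then $\omega^q$ is closed and strictly strongly positive, hence transverse, for every $q$. Hence $\omega^q$ is a $q$-K\"ahler structure, and the case $p=1$ needs nothing beyond this.

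Hence for $k\in\{1,2\}$ every $p\ge1$ is covered: either $p\ge k$ and we iterate Proposition \ref{Lie algebra con particolari ipotesi}, or $p=1$ and we take powers of the K\"ahler form. The only point needing care is checking that the inductive step may reuse the same $\mathfrak a$ and $\mathfrak q$, and this is immediate. For $k\ge3$ the argument breaks down, because for $2\le p<k$ neither tool applies. This explains the restriction to $k=1,2$.
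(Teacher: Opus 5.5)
Your proposal is correct. It is essentially the argument the paper leaves implicit when it calls the corollary a direct consequence of \Cref{Lie algebra con particolari ipotesi}: iterate the proposition for $p\ge k$, which is possible since its hypotheses involve only $\mathfrak a$ and $\mathfrak q$, and in the one leftover case $k=2$, $p=1$ take powers $\omega^q$ of the K\"ahler form.
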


We note that if $\g$ is as in the statement of  \Cref{Lie algebra con particolari ipotesi} and $\mathfrak g/\mathfrak a$ is nilpotent, then it is abelian, so in particular the Lie algebra $\g$ is $2$-step nilpotent.
Viceversa, a $2$-step nilpotent Lie algebra satisfies the assumptions of \Cref{Lie algebra con particolari ipotesi}, for some $k\ge1$, if and only if it is not of type I, in the notation of \cite{fs2023}.

\begin{remark}
    A similar argument to the proof of \Cref{Lie algebra con particolari ipotesi} holds for every pair $(\mathfrak g,J)$ such that the Bott-Chern number $h_{BC}^{1,0}$ is at least $n-p$, namely there are $n-p$ linearly independent closed $(1,0)$-forms.
\end{remark}

\subsection{Nilmanifolds with nilpotent complex structures}
The main result of this subsection is that on a nilmanifold endowed with a nilpotent complex structure, the Alessandrini-Bassanelli conjecture holds.

\medskip
The proof will rely on the following result, consequence of \Cref{STthm}.

\begin{lemma}\label{nonsisa}
    Let $\mathfrak g$ be a nilpotent Lie algebra endowed with a nilpotent complex structure $J$, admitting $p$-K\"ahler structures, for all $p\ge p_0$.
    Then, there exists a $J$-adapted basis $\pg{\varphi^1,\dots,\varphi^n}$ such that 
    \begin{equation}\label{eqnonsisa1}
            d \varphi^{j} = 0, \quad j = 1, \dots,n-p_0+1.
    \end{equation}
\end{lemma}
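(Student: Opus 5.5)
The plan is to combine the Lie algebra version of \Cref{STthm} (see \Cref{remark di unimodularita}) with a reordering of a $J$-adapted basis that moves all closed elements to the front.

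\textbf{Step 1 (reordering).} Fix any $J$-adapted basis $\pg{\varphi^1,\dots,\varphi^n}$, which exists because $J$ is nilpotent. Let $j_1<\dots<j_{l_0}$ be the indices with $d\varphi^{j}=0$, and let the remaining indices, those with $d\varphi^j\neq0$, be listed in increasing order. Consider the new ordered basis consisting first of $\varphi^{j_1},\dots,\varphi^{j_{l_0}}$, followed by the remaining $\varphi^j$ in their original relative order. I claim this basis is still $J$-adapted.
\begin{itemize}
\item The closed elements satisfy \eqref{nilpCS} trivially.
\item For a non-closed $\varphi^j$, its set of predecessors in the new order contains all of $\varphi^1,\dots,\varphi^{j-1}$: every predecessor in the old order was either closed, and is now placed first, or non-closed with smaller index, and so still precedes $\varphi^j$. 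Hence the span appearing in \eqref{nilpCS} can only get larger, and the condition is preserved.
\end{itemize}
In the new basis we therefore have $d\varphi^j=0$ for $j\le l_0$ and $d\varphi^l\neq0$ for $l>l_0$. This is exactly the hypothesis on the index $l_0$ in \Cref{STthm}.

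\textbf{Step 2 (the obstruction).} If $l_0=n$, then $\g$ is abelian and \eqref{eqnonsisa1} holds trivially. Otherwise $1\le n-l_0\le n$. Apply \Cref{STthm}, in its Lie algebra version valid for any nilpotent Lie algebra by \Cref{remark di unimodularita}, to the reordered basis. It gives that $(\g,J)$ admits no $(n-l_0)$-K\"ahler structure. Since $(\g,J)$ is $p$-K\"ahler for every $p\ge p_0$, we must have $n-l_0<p_0$, that is, $l_0\ge n-p_0+1$. So the first $n-p_0+1$ elements of the reordered basis are closed, which is \eqref{eqnonsisa1}.

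\textbf{Main point to check.} The argument needs \Cref{STthm} to hold for an \emph{arbitrary} $J$-adapted basis, with $l_0$ defined through that basis, and at the Lie algebra level rather than on a compact quotient. The only delicate step is the claim in Step 1 that the reordering preserves adaptedness. I expect this to be the one place needing care, and the predecessor-set argument above settles it. No maximality or optimization over bases is required.
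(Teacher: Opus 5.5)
Your proof is correct, but it is organized differently from the paper's.

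The paper uses a descending recursion. It starts from $d\varphi^1=0$. Then, for $p=n-1,n-2,\dots,p_0$ in turn, it applies the one-step claim: if $(\g,J)$ is $p$-K\"ahler and $\varphi^1,\dots,\varphi^{n-p}$ are closed, then $\varphi^{n-p+1}$ is closed. You instead reorder once, putting all closed elements first. You then invoke \Cref{STthm} a single time, with $l_0$ the total number of closed elements.

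Your reordering has a real advantage. It makes the hypothesis of \Cref{STthm}, as stated in the paper ($d\varphi^l\neq0$ for \emph{all} $l>l_0$), hold literally. The paper's one-step claim does not follow directly from that formulation, since later basis elements may well be closed. It tacitly relies on the sharper (true) version that only needs $d\varphi^{l_0+1}\neq0$. To see that version: pick a $(2l_0-2)$-form $\Phi$ in the exterior algebra generated by $\varphi^1,\dots,\varphi^{l_0}$ and their conjugates, and $\lambda\in\C$, so that $d(\lambda\,\varphi^{l_0+1}\wedge\Phi)=\varphi^1\wedge\dots\wedge\varphi^{l_0}\wedge\overline{\varphi^1\wedge\dots\wedge\varphi^{l_0}}$. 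Then \Cref{OstrHMT}, together with Remark~\ref{remark di unimodularita}, applies.

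Conversely, granting that sharper version, the paper's recursion proves more than the lemma states. It gives \eqref{eqnonsisa1} in \emph{every} $J$-adapted basis, with no reordering. Your argument produces only one particular basis. That is exactly what the lemma asserts, and all that its applications, such as \Cref{polis}, need.
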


\begin{proof}
By \Cref{STthm}, if $(\mathfrak g,J)$ as in the statement admits a $p$-K\"ahler structure, and  $\mathcal{B}=\pg{\varphi^1,\dots,\varphi^n}$ is a $J$-adapted basis such that 
    \begin{equation*}
            d \varphi^{j} = 0, \quad j = 1, \dots,n-p,
    \end{equation*}
    then $d\varphi^{n-p+1}=0$.
As a direct consequence we have the statement for  $p_0=n-1$, as by \eqref{nilpCS}, $d\varphi^1=0$ in every $J$-adapted basis.
For lower $p_0$, the thesis follows arguing recursively.
\end{proof}

We are now ready to prove that the Alessandrini-Bassanelli conjecture holds, on nilmanifolds endowed with a nilpotent complex structure.
By \Cref{simmetrizzazioni}, this is equivalent to the analogous statement, at the Lie algebra level.

\begin{theorem}\label{polis}
Let $\g$ be a nilpotent Lie algebra equipped with a nilpotent complex structure $J$.
If we assume that $(\g,J)$ has complex dimension $n$ and that it admits $p$-K\"ahler structures for some $p<n$, then it also admits a $(p+1)$-K\"ahler structure.
\end{theorem}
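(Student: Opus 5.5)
Our plan is to use \Cref{ambrozia} with closed $(1,0)$-forms, as in the proof of \Cref{Lie algebra con particolari ipotesi} and the subsequent remark. We need $n-p$ linearly independent closed $(1,0)$-forms $\varphi^1,\dots,\varphi^{n-p}$. Given such forms, put $\omega=i\sum_{j=1}^{n-p}\varphi^j\wedge\overline{\varphi^j}$. This form is closed, so if $\Omega$ is a $p$-K\"ahler structure, then $\Omega\wedge\omega$ is closed, and it is transverse by \Cref{ambrozia} with $k=n-p$. Hence it is a $(p+1)$-K\"ahler structure. The whole task is therefore to produce $n-p$ independent closed $(1,0)$-forms from the existence of a $p$-K\"ahler structure.

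We would obtain these forms from \Cref{STthm} (in its Lie algebra form, \Cref{remark di unimodularita}), inductively as in \Cref{nonsisa}. The hypothesis of \Cref{nonsisa} is that $p$-K\"ahler structures exist for all $p\ge p_0$, which is exactly what we are trying to prove, so we argue by downward induction on $p$ instead. Set $l_0$ to be the number of initial closed elements of a $J$-adapted basis. \Cref{STthm} says that if $l_0=n-p$, then $(\g,J)$ is not $p$-K\"ahler.

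\emph{Key step.} We must show that $p$-K\"ahler alone forces some $J$-adapted basis with $d\varphi^j=0$ for $j\le n-p$. Take a $J$-adapted basis maximizing $l_0$. If $l_0\ge n-p$ we are done. Otherwise $l_0<n-p$, so $\tilde p':=n-l_0>p$.

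\begin{itemize}
\item[] \emph{First attempt.} One would like a $(\tilde p'\,\text{-K\"ahler})$ obstruction transported down to $p$. Obstructions, however, propagate in the wrong direction, since $p$-K\"ahler does not imply $q$-K\"ahler for $q>p$; that is the conjecture itself.
\item[] \emph{Direct attempt.} Instead we would use \Cref{OstrHMT} directly, at level $p$. With $l_0<n-p$, the form $\varphi^{l_0+1}$ is non-closed and $d\varphi^{l_0+1}$ lies in $\Lambda^2\langle\varphi^1,\dots,\varphi^{l_0},\bar\varphi^1,\dots,\bar\varphi^{l_0}\rangle$. This uses maximality of $l_0$, after reordering the adapted basis so that the closed forms come first.
\item[] \emph{Candidate exact form.} Consider $\alpha=\varphi^{l_0+1}\wedge\overline{\varphi^{l_0+2}\wedge\dots}$, or more concretely the exact decomposable $(\tilde p,\tilde p)$-form
\[
d\Big(\varphi^{l_0+1}\wedge\varphi^{l_0+2}\wedge\cdots\wedge\varphi^{n-p'}\wedge\overline{(\cdots)}\Big).
\]
The aim is to show that it has a semidefinite $(\tilde p,\tilde p)$ part of the form required by \Cref{OstrHMT}, which would give a contradiction.
\end{itemize}

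\emph{Alternative route.} If this fails, we would instead argue by induction on $n$ via \Cref{enrico}. Nilpotent $J$ is quasi-nilpotent: the dual of $\varphi^n$ gives a $J$-invariant central $\mathfrak a$. Then $\g/\mathfrak a$ is $(p-1)$-K\"ahler with a nilpotent complex structure of dimension $n-1$. By induction it has an adapted basis with $n-p$ closed forms, and these pull back to closed forms on $\g$ because $\pi^*$ commutes with $d$. The gap in $\g/\mathfrak a$ is $(n-1)-(p-1)=n-p$, exactly what we need.

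The base of this induction is the case where $p-1$ would drop to $0$, that is $p=1$. There $\g$ is K\"ahler, and a K\"ahler nilpotent Lie algebra is abelian, so all $n\ge n-1$ forms are closed. The low-dimensional restriction $2n\ge 6$ is handled directly. The main obstacle is checking that the induced structure on $\g/\mathfrak a$ is nilpotent, with adapted basis $\varphi^1,\dots,\varphi^{n-1}$, and that closedness lifts. Both follow since $\mathfrak a^*$ is spanned by $\varphi^n$, which appears in no differential. With this inductive route the key step is settled, and \Cref{ambrozia} concludes.
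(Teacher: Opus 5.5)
Your proof is correct once you discard the attempts under \emph{Key step}: the ``direct attempt'' and the ``candidate exact form'' are not arguments. Everything rests on your alternative route, which differs from the paper's in what the induction carries.

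The paper inducts on the conjecture itself. \Cref{bext} makes $\g/\ab$ $(p-1)$-K\"ahler, and the inductive hypothesis upgrades this to $q$-K\"ahler for all $q\ge p-1$. Only then does \Cref{nonsisa}, an iteration of the obstruction \Cref{STthm}, supply $\tilde p+1$ closed forms in an adapted basis of the quotient; these are pulled back and fed to \Cref{ambrozia}. The low-dimensional cases are taken from the literature.

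You instead induct on the statement ``$p$-K\"ahler $\Rightarrow$ at least $n-p$ independent closed $(1,0)$-forms''. Its gap $n-p$ is preserved by \Cref{enrico}, and the base $p=1$ is the Benson--Gordon fact that K\"ahler nilpotent Lie algebras are abelian. This bypasses \Cref{STthm}, \Cref{nonsisa} and the cited base cases. Also, \Cref{enrico} has no $2n\ge 6$ restriction, so your remark about low dimensions is moot.

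Your route also proves more. The base already gives $n-p+1$ closed forms. Moreover, iterate \Cref{enrico} along a fixed $J$-adapted basis, quotienting successively by the central vectors $Z_n,Z_{n-1},\dots$ dual to $\varphi^n,\varphi^{n-1},\dots$. This shows that the first $n-p+1$ elements of \emph{every} $J$-adapted basis are closed as soon as a single $p$-K\"ahler structure exists. That statement contains \Cref{STthm}, and gives the conclusion of \Cref{nonsisa} under a weaker hypothesis.

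The two proofs share the same skeleton: quotient by a central $J$-invariant plane, pull back closed forms, conclude with \Cref{ambrozia}. The paper's route stays within the existing obstruction theory; yours is shorter and self-contained.
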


\begin{proof}
We prove the theorem by induction on the complex dimension $n$.
Note that the cases $n\le 3$ and $p=1,n-1$ are trivial, whereas $n = 4,5$ and $p=n-2$ are true by \cite[Proposition 3.5]{fm} and \cite[Theorem 4.2]{Log2025}.

Assume the thesis holds for complex dimension $n$, and let us prove it for complex dimension $n+1$.
As mentioned above, we know that the statement is true when $p=1,n-1,n$, so suppose that $\mathfrak{g}$ admits a $p$-K\"ahler structure, for $1 < p < n-1$, and recall that  $\tilde p = n + 1 - p$. 
Then, $\mathfrak g$ is the $\ab $-extension of a Lie algebra $\mathfrak k$, with $\dim_{\R}\mathfrak k=2n$, endowed with the quotient complex structure $J_\mathfrak k$, which is nilpotent.
Furthermore,  by \Cref{bext}, $(\mathfrak k,J_\mathfrak k)$ is $(p-1)$-K\"ahler. 
In particular, we can choose a $J$-adapted basis $\mathcal{B}=\pg{\varphi^1,\dots,\varphi^{n+1}}$ for $\mathfrak g$ such that $(\mathfrak k,J_\mathfrak k)$ is isomorphic to the $J$-invariant vector subspace $V_\mathfrak k$ of $\g$ with $\Lambda^{1,0}V_\mathfrak k$ generated by $\pg{\varphi^1,\dots,\varphi^n}$, and under this isomorphism, $J_\mathfrak k=\rest{J}{V_\mathfrak k}$.
By induction, $(\mathfrak k,J_{\mathfrak k})$ admits $q$-K\"ahler structures for $q= p-1, \dots, n$, so by \Cref{nonsisa}, we can choose $\mathcal{B}$ so that
     \begin{equation}\label{catusino1}
         d \varphi^{j} = 0, \quad j = 1, \dots, {n-(p-1)+1=\tilde p+1}.
     \end{equation}
Denote by $\Omega$ the $p$-K\"ahler structure on $(\mathfrak{g}, J)$, and consider the $(p+1,p+1)$-form
    \begin{equation*}
        \widetilde{\Omega} \coloneqq i\,{\Omega \wedge {\sum_{j=1}^{\tilde p} \varphi^{j} \wedge \varphi^{\overline{j}}}} .
    \end{equation*}
    By \eqref{catusino1}, $d\widetilde{\Omega} =0$, and $\widetilde{\Omega}$ is transverse, by \Cref{ambrozia}.
\end{proof}

\Cref{polis} holds in particular for holomorphically parallelizable nilmanifolds, as any invariant complex structure on such manifolds is nilpotent.

\begin{remark} 
The analogue of the Alessandrini-Bassanelli conjecture cannot hold for $p$-pluriclosed structures, namely $(p,p)$-forms that are transverse and $\partial\bar\partial$-closed.
More precisely, \Cref{polis} cannot be generalized to hold for $p$-pluriclosed structures.
Indeed, we consider the $8$-dimensional nilpotent Lie algebra $\g$ in \cite[Example 4.5]{AGL} (see also \cite[Section 4]{EFV}).
The Lie algebra $\mathfrak g$ is equipped with a nilpotent complex structure $J$, such that the complex structure equations are
    \begin{equation*}
        d\varphi^1=d\varphi^2=0,\quad
        d\varphi^3=\varphi^{1\bar1}+\frac12\varphi^{2\bar2},\quad
        d\varphi^4=-\varphi^{1\bar2},
    \end{equation*}
    where we are using the notation $\varphi^{l\bar m}:=\varphi^{l}\wedge\varphi^{\bar m} $.
    A diagonal Hermitian metric $\omega=i\pt{a_1\varphi^{1\bar1}+a_2\varphi^{2\bar2}+a_3\varphi^{3\bar3}+a_4\varphi^{4\bar4}}$ has 
    \begin{equation*}
        dd^c\omega=\pt{a_3-a_4}\varphi^{1\bar12\bar2}.
    \end{equation*}
    In particular, for $a_3=a_4$ this gives a pluriclosed metric. On the other hand, whenever $a_3\neq a_4$, the $(2,2)$-form $dd^c\omega$ is an obstruction to the existence of $2$-pluriclosed structures on $(\mathfrak g,J)$, because $\mathfrak g$ is nilpotent, thus unimodular.
Furthermore, by \cite{malcev}, the simply connected nilpotent Lie group with Lie algebra $\g$ admits lattices, because the structure constants of $\g$ are rational, so the above discussion also provides examples of nilmanifolds not satisfying the modified conjecture.

We also note that the almost abelian Lie algebras studied in \cite[Example 4.18]{MaMo26} provide further counterexamples to the analogous of the Alessandrini-Bassanelli conjecture in this setting.
Indeed, these are examples of odd complex dimension $n=2k+1$, for $k\ge2$, which are $p$-pluriclosed if and only if $p=k$.
However, the problem of existence of lattices on the associated simply connected Lie group is still open.
\end{remark}

\section{Compact holomorphically parallelizable manifolds}\label{secHolPar}

In this section, we address several problems concerning the existence of $p$-K\"ahler structures on compact holomorphically parallelizable manifolds, namely compact complex manifolds $(M,J)$ admitting a global holomorphic frame.
By \cite{wang}, such manifolds are biholomorphic to a quotient of a complex Lie group $G$ by a discrete subgroup.
Thus, the complex structure $J$ is \textit{biinvariant}, meaning that both left and right multiplication are holomorphic.
This condition, at the level of the (complex) Lie algebra $\mathfrak g$ of $G$, reads
\begin{equation*}
    [X,JY] = J[X,Y],\qquad\forall X,Y \in\g,
\end{equation*}
or equivalently, $[\g^{1,0},\g^{0,1}]=0,$ where $\g^{1,0}\subset\g_\C$ is the $i$-eigenspace of $J$, and $\g^{0,1}=\overline{\g^{1,0}}$.
In particular, $J$ induces an isomorphism of Lie algebras $\g^{1,0}\simeq\g$, which extends to an isomorphism 
\begin{equation}\label{k0isok}
    \Lambda^{k,0}\g^*_\C\simeq\Lambda^k\g^*
\end{equation}

\subsection{The solvable case}
In this section, we study the existence of $p$-K\"ahler structures on complex solvable Lie algebras with biinvariant complex structures. More precisely, we prove the Alessandrini-Bassanelli conjecture for unimodular solvable Lie algebras equipped with a biinvariant complex structure and as a consequence we characterize the existence of $p$-K\"ahler structures, for low enough $p$, in terms of the structure equations of the Lie algebra. We conclude with some remarks on Bott-Chern classes, and complex almost abelian solvmanifolds.

\smallskip
Recall that on unimodular, solvable, complex Lie algebras, with biinvariant complex structures, by \cite[(1.2)]{nakamura}, there is always a basis of $(1,0)$-forms such that 
    \begin{equation}\label{Nakamura basis}
        d \varphi^{j} = \xi_{j} \wedge \varphi^{j} + \eta_{j}, \quad j=1, \dots, n,
    \end{equation}
    where $\xi_{j} \in \langle \varphi^{1}, \dots, \varphi^{j-1} \rangle$ and $\eta_{j} \in \Lambda^{2}\langle \varphi^{1}, \dots, \varphi^{j-1} \rangle$, for all $j=1, \dots, n$. 

\smallskip
Similarly to the case of nilpotent complex structures on nilpotent Lie algebras, we show that the existence of $p$-K\"ahler structures in this setting is obstructed, for a specific value of $p$, depending on the structure equations.

\begin{proposition}
    Assume $\mathfrak g$ is a non-abelian, unimodular, solvable Lie algebra with a biinvariant complex structure $J$. 
    Let $\pg{\varphi^{1}, \dots, \varphi^{n}}$ be a basis of $(1,0)$-forms for $(\g,J)$ as in \eqref{Nakamura basis}, and let $l_0$ be the index such that 
    \begin{equation*}
        d \varphi^{l_0} \neq 0, \quad d \varphi^{j} = 0, \quad \forall j < l_0.
    \end{equation*}
    Then, there are no $p$-K\"ahler structures on $(\g,J)$, for $p=n-l_0, n-l_0+1$. 
\end{proposition}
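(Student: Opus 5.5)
The plan is to use the Lie algebra version of Proposition~\ref{AB3.2}. By Remark~\ref{remark di unimodularita}, it applies to unimodular Lie algebras with a biinvariant complex structure, so $(\g,J)$ is $p$-K\"ahler exactly when there is no non-zero, decomposable, exact, holomorphic $(\tilde p,0)$-form. For $p=n-l_0$ we need such a form of degree $\tilde p=l_0$, and for $p=n-l_0+1$ one of degree $l_0-1$. Since $\g$ is non-abelian, $l_0$ exists, and $d\varphi^{l_0}\neq 0$ forces $l_0\ge 2$. By \eqref{Nakamura basis},
\begin{equation*}
d\varphi^{l_0}=\xi_{l_0}\wedge\varphi^{l_0}+\eta_{l_0},
\end{equation*}
where $\xi_{l_0}$ and $\eta_{l_0}$ are built only from the closed forms $\varphi^1,\dots,\varphi^{l_0-1}$. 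Write $\Phi\coloneqq\varphi^1\wedge\dots\wedge\varphi^{l_0-1}$, which is a closed decomposable form. The exact forms will all be of the shape $d(\varphi^{l_0}\wedge\beta)=d\varphi^{l_0}\wedge\beta$, with $\beta$ a wedge product of some of $\varphi^1,\dots,\varphi^{l_0-1}$. Any element of $\Lambda^{l_0-1}\langle\varphi^1,\dots,\varphi^{l_0-1}\rangle$ is a multiple of $\Phi$ and hence decomposable, and this is the mechanism that makes the resulting forms decomposable.

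\emph{Case $\xi_{l_0}\neq0$, degree $l_0$.} Take $\beta$ of degree $l_0-2$, a product of basis forms chosen so that $\xi_{l_0}\wedge\beta\neq 0$. Then $\eta_{l_0}\wedge\beta$ is an $l_0$-form in the $(l_0-1)$-dimensional space $\langle\varphi^1,\dots,\varphi^{l_0-1}\rangle$, so it vanishes. Hence $d(\varphi^{l_0}\wedge\beta)=\pm c\,\varphi^1\wedge\dots\wedge\varphi^{l_0}$ with $c\neq0$. This is a non-zero, exact, decomposable, holomorphic $(l_0,0)$-form, which rules out $p=n-l_0$.

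\emph{Case $\eta_{l_0}\neq0$, degree $l_0-1$.} Suppose $\eta_{l_0}$ has a non-zero coefficient on $\varphi^a\wedge\varphi^b$. Take $\beta$ to be the product of the remaining $l_0-3$ forms. If $\xi_{l_0}=0$, then $d(\varphi^{l_0}\wedge\beta)=\eta_{l_0}\wedge\beta=c\,\Phi$ with $c\ne0$, which rules out $p=n-l_0+1$. If $\xi_{l_0}\neq0$, I would first try to pass to a new basis of the same type by changing $\varphi^{l_0}\mapsto\varphi^{l_0}+\gamma$ with $\gamma\in\langle\varphi^1,\dots,\varphi^{l_0-1}\rangle$, so as to absorb $\eta_{l_0}$. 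Alternatively, I would produce the $(l_0-1)$-form directly as $d(\varphi^{l_0}\wedge\beta')$ with $\beta'$ of degree $l_0-3$ avoiding $\xi_{l_0}$, and then check decomposability by hand. The extra term $\xi_{l_0}\wedge\varphi^{l_0}\wedge\beta'$ is what spoils decomposability here.

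The main obstacle is the pair of mixed cases: the degree $l_0$ obstruction when $\xi_{l_0}=0$, and the degree $l_0-1$ obstruction when $\xi_{l_0}\neq0$. When $\xi_{l_0}=0$, every $d(\varphi^{l_0}\wedge\beta)$ has degree at most $l_0-1$ in the span of $\varphi^1,\dots,\varphi^{l_0-1}$. So to reach degree $l_0$ one must bring in a further closed form $\varphi^m$ with $m>l_0$, if one exists, via $d(\varphi^{l_0}\wedge\beta\wedge\varphi^m)=c\,\Phi\wedge\varphi^m$. Otherwise one must use unimodularity, that is, the constraint that the traces $\xi_j$ sum to zero, to reorder the Nakamura basis so that some form with non-zero $\xi$-part appears at position $l_0$. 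If neither works, I would fall back on the more flexible obstruction of Proposition~\ref{OstrHMT}. That proposition only requires a form $\alpha$ whose $(l_0,l_0)$-component $(d\alpha)^{l_0,l_0}$ is a same-sign sum of terms $\psi\wedge\bar\psi$. I would test candidates such as $\alpha=\varphi^{l_0}\wedge\beta\wedge\overline{\varphi^{l_0}\wedge\beta}\wedge\overline{\varphi^m}$, with $\beta$ as above.
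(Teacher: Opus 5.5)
\textbf{Verdict: genuine gap.} Your two ``pure'' subcases are correct and coincide with the paper's argument. However, the two mixed subcases, which you flag yourself as the main obstacle, make up half of the statement and are left open.

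\textbf{Case $\xi_{l_0}\neq0$, degree $l_0-1$.} The idea you are missing is elementary: every form of degree $m-1$ on an $m$-dimensional space is decomposable, since it is the contraction of a volume form with a vector. You only use this fact in top degree. Take $\beta'$ a product of $l_0-3$ of the forms $\varphi^1,\dots,\varphi^{l_0-1}$ with $\xi_{l_0}\wedge\beta'\neq0$. Then $d(\varphi^{l_0}\wedge\beta')=\xi_{l_0}\wedge\varphi^{l_0}\wedge\beta'+\eta_{l_0}\wedge\beta'$ is non-zero, because only the first term contains $\varphi^{l_0}$. It lies in $\Lambda^{l_0-1}\langle\varphi^1,\dots,\varphi^{l_0}\rangle$, hence is decomposable; the extra term does not spoil anything.

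Your absorption idea would also work, but it needs a justification you did not give. From $d^2\varphi^{l_0}=0$ one gets $\xi_{l_0}\wedge\eta_{l_0}=0$, hence $\eta_{l_0}=\xi_{l_0}\wedge\gamma$ for some $\gamma\in\langle\varphi^1,\dots,\varphi^{l_0-1}\rangle$. Then $d(\varphi^{l_0}+\gamma)=\xi_{l_0}\wedge(\varphi^{l_0}+\gamma)$.

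This subcase needs $l_0\ge3$, and unavoidably so. For $l_0=2$ there are no non-zero exact $1$-forms, and the claim for $p=n-1$ is in fact false. Take $n=3$, $d\varphi^1=0$, $d\varphi^2=\varphi^{12}$, $d\varphi^3=-\varphi^{13}$: this is unimodular, hence balanced by the Lie algebra version of Proposition~\ref{AB3.2}. The paper's proof, which uses $\varphi^3\wedge\dots\wedge\varphi^{l_0}$, tacitly assumes $l_0\ge3$ as well.

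\textbf{Case $\xi_{l_0}=0$, degree $l_0$.} None of your suggestions gives a proof. Consider $d\varphi^1=d\varphi^2=0$, $d\varphi^3=\varphi^{12}$, $d\varphi^4=\varphi^{13}$, $d\varphi^5=\varphi^{14}$, so $n=5$, $l_0=3$, $p=2$. Here the closed $(1,0)$-forms are only the combinations of $\varphi^1,\varphi^2$, and all $\xi_j$ vanish. So there is no extra closed $\varphi^m$, and no reordering can put a non-zero $\xi$ at position $l_0$; such a reordering would in any case change the index $l_0$ fixed in the statement. Your test form for Proposition~\ref{OstrHMT} is only a guess.

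The paper instead uses the next, \emph{non-closed} form $\varphi^{l_0+1}$ together with the codimension-one fact above. With $\beta$ as in your degree-$(l_0-1)$ argument,
\[
d\pt{\beta\wedge\varphi^{l_0}\wedge\varphi^{l_0+1}}=\pm\,\beta\wedge\pt{\eta_{l_0}\wedge\varphi^{l_0+1}-\varphi^{l_0}\wedge d\varphi^{l_0+1}} .
\]
This form has the component $\pm c\,\Phi\wedge\varphi^{l_0+1}\neq0$ not containing $\varphi^{l_0}$, where $\Phi=\varphi^1\wedge\dots\wedge\varphi^{l_0-1}$, so it is non-zero. It is an $l_0$-form on the $(l_0+1)$-dimensional space $\langle\varphi^1,\dots,\varphi^{l_0+1}\rangle$, hence decomposable. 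If $l_0=n$ then $p=0$ and there is nothing to prove. In the example above this is just $d(\varphi^3\wedge\varphi^4)=\varphi^{124}$.
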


\begin{proof}
If $\xi_{l_0} \ne 0$, we can always assume that 
\begin{equation*}
    d\varphi^{l_0}=\varphi^1\wedge\varphi^{l_0}+\eta_{l_0},
\end{equation*}
with $\eta_{l_0} \in \Lambda^{2}\langle \varphi^{1}, \dots, \varphi^{l_0-1} \rangle$.
Then,
    \begin{equation*}
        d(\varphi^{2} \wedge \dots \wedge \varphi^{l_0}) = (-1)^{l_0} \varphi^{1}\wedge \dots \wedge \varphi^{l_0},
    \end{equation*}
    because $\varphi^{2}\wedge \dots \wedge \varphi^{l_0-1}\wedge\eta_{l_0}$ is a ${l_0}$-form on a space of dimension ${l_0}-1$.
This obstructs the existence of $(n-{l_0})$-K\"ahler structures.
Furthermore, 
    \begin{equation*}
            d(\varphi^{3} \wedge \dots \wedge \varphi^{l_0}) 
            =  \pt{-1}^{l_0-1} \varphi^{3} \wedge \dots \wedge \varphi^{l_0-1}\wedge d\varphi^{l_0}
            \neq0
    \end{equation*}
is decomposable as it is in $\Lambda^{l_0-1}\langle\varphi^{1}, \dots, \varphi^{l_0}\rangle$, so there are no $(n-{l_0}+1)$-K\"ahler structures.

On the other hand, if $\xi_{l_0}=0$, then $\eta_{l_0}\neq0$.
We can assume that
    \begin{equation*}
        d (\varphi^{3} \wedge \dots \wedge \varphi^{l_0}) =\pt{-1}^{l_0} \varphi^{3} \wedge \dots \wedge \varphi^{l_0-1}\wedge \eta_{l_0}
    \end{equation*}
    is non-zero, so that 
    \begin{equation*}
            d (\varphi^{3} \wedge \dots \wedge \varphi^{l_0+1})  =  \pt{-1}^{l_0-1} \varphi^{3} \wedge \dots \wedge \varphi^{l_0-1}\wedge \pt{d\varphi^{l_0}\wedge\varphi^{l_0+1}    
             - \varphi^{l_0}\wedge d\varphi^{l_0+1}}
    \end{equation*}
    is also non-zero.
 As above, these are decomposable forms by dimensional reasons, allowing us to conclude.
\end{proof}

In order to prove the Alessandrini–Bassanelli conjecture for unimodular, solvable Lie algebras with biinvariant complex structures, we will need the following technical lemma.

\begin{lemma}\label{tanti nulli}
    Assume $\mathfrak g$ is non-abelian, unimodular and solvable, with a biinvariant complex structure, and let $n\ge4$ be the complex dimension of $(\g,J)$. Suppose that $(\mathfrak g,J)$ admits a $p$-K\"ahler structure for $1<p < n-1$. Then, there exist a basis of $(1,0)$-forms $\pg{\varphi^{1}, \dots, \varphi^{n}}$ such that 
    \begin{equation}\label{new nakamura basis}
        \begin{aligned}
            & d \varphi^{l} = 0, \quad && l=1, \dots, \tilde{p}+2,  \\
            & d \varphi^{j} = \xi_{j} \wedge \varphi^{j} + \eta_{j}, \quad && j= \tilde{p}+3, \dots, n,
        \end{aligned} 
    \end{equation}
    where $\tilde{p} \coloneqq n-p$, $\xi_{j} \in \langle \varphi^{1}, \dots, \varphi^{j-1} \rangle$ and $\eta_{j} \in \Lambda^{2}\langle \varphi^{1}, \dots, \varphi^{j-1} \rangle$.
    
    In particular, on $(\g,J)$ satisfying the assumptions, there are no $2$-K\"ahler structures. 
\end{lemma}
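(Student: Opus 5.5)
The plan is to use the characterization of Proposition \ref{AB3.2}, in its unimodular Lie algebra version from Remark \ref{remark di unimodularita}: $(\g,J)$ is $p$-K\"ahler if and only if there is no non-zero, decomposable, exact holomorphic $(\tilde p,0)$-form. I then show that every configuration of the structure equations other than \eqref{new nakamura basis} produces such a form. Fix a basis as in \eqref{Nakamura basis}. For each $m$, the span $\langle\varphi^1,\dots,\varphi^m\rangle$ is $d$-closed, because $d\varphi^j\in\Lambda^2\langle\varphi^1,\dots,\varphi^j\rangle$. Hence it is the dual of a quotient Lie algebra $\h_m$ of complex dimension $m$, and $\Lambda^{k}\h_m^*\subset\Lambda^{k,0}\g^*_\C$ consists of closed-under-$d$ subcomplexes. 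The key observation is that forms of degree $m-1$ (and trivially $m$) in $\Lambda\h_m^*$ are automatically decomposable, by the standard duality $\Lambda^{m-1}\h_m^*\simeq\h_m$.

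\textbf{Step 1: $\h_{\tilde p+1}$ is abelian.} Every element of $d\pt{\Lambda^{\tilde p-1}\h_{\tilde p+1}^*}\subset\Lambda^{\tilde p}\h_{\tilde p+1}^*$ is decomposable and exact. So $p$-K\"ahlerianity forces $d\colon\Lambda^{m-2}\h_m^*\to\Lambda^{m-1}\h_m^*$ to vanish for $m=\tilde p+1$. Contracting with a volume form identifies this map, up to sign, with the Chevalley--Eilenberg boundary twisted by the modular character, namely $x\wedge y\mapsto[x,y]-\operatorname{tr}(\operatorname{ad}x)\,y+\operatorname{tr}(\operatorname{ad}y)\,x$. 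If $\h_m$ is unimodular, its vanishing means that $\h_m$ is abelian. If $\h_m$ is not unimodular, pick $x$ with $\operatorname{tr}\operatorname{ad}x\neq0$ and $y\in\ker\operatorname{tr}\circ\operatorname{ad}$. Vanishing would force $\operatorname{ad}x=\operatorname{tr}(\operatorname{ad}x)\,\mathrm{Id}$ on a hyperplane, and taking traces gives a contradiction as soon as $m\ge3$; here $m=\tilde p+1\ge3$ because $p<n-1$. Hence $\h_{\tilde p+1}$ is abelian, that is $d\varphi^l=0$ for $l\le\tilde p+1$, so $l_0\ge\tilde p+2$. Here $l_0\le n$ is the first index with $d\varphi^{l_0}\neq0$, which exists since $\g$ is non-abelian.

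\textbf{Step 2: exclude $l_0=\tilde p+2$.} This is the step I expect to be the main obstacle, since it is a codimension-two statement where automatic decomposability fails. I would adapt the explicit constructions in the proof of the preceding Proposition.
\begin{itemize}
\item If $\xi_{l_0}\neq0$, normalize so that $\xi_{l_0}=\varphi^1$, and by a change of basis among the closed forms arrange that $\eta_{l_0}$ modulo $\langle\varphi^3,\dots,\varphi^{l_0-1}\rangle$ is a multiple $c\,\varphi^1\wedge\varphi^2$. Then
\[
d(\varphi^3\wedge\dots\wedge\varphi^{l_0})=\pm\varphi^3\wedge\dots\wedge\varphi^{l_0-1}\wedge\varphi^1\wedge\pt{\varphi^{l_0}+c\,\varphi^2}
\]
is non-zero and decomposable of degree $l_0-2=\tilde p$.
\item If $\xi_{l_0}=0$, then $\eta_{l_0}\neq0$ lies in $\Lambda^2$ of closed forms. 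Choose the closed basis so that $\eta_{l_0}$ has a non-zero $\varphi^1\wedge\varphi^2$ coefficient. Then $d(\varphi^3\wedge\dots\wedge\varphi^{l_0})=\pm\varphi^3\wedge\dots\wedge\varphi^{l_0-1}\wedge\eta_{l_0}$ reduces to a non-zero multiple of $\varphi^1\wedge\dots\wedge\varphi^{l_0-1}$. This is again decomposable, exact and of degree $\tilde p$.
\end{itemize}
In both cases we contradict Proposition \ref{AB3.2}, so $l_0\ge\tilde p+3$. The remaining equations keep the form \eqref{Nakamura basis}, because the changes of basis only mix the closed forms $\varphi^1,\dots,\varphi^{l_0-1}$, which preserves the triangular shape. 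This gives \eqref{new nakamura basis}.

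\textbf{Step 3: the final assertion.} For $p=2$ we have $\tilde p+2=n$, so \eqref{new nakamura basis} would make all $\varphi^j$ closed and $\g$ abelian, contradicting the assumptions. Hence there are no $2$-K\"ahler structures.
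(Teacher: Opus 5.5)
\textbf{There is a genuine gap in Step 2.} Steps 1 and 3 are correct. Your Step 1 is in fact a cleaner route than the paper's. Identifying $d\colon\Lambda^{m-2}\h_m^*\to\Lambda^{m-1}\h_m^*$ with the twisted boundary $x\wedge y\mapsto[x,y]-\operatorname{tr}(\operatorname{ad}x)\,y+\operatorname{tr}(\operatorname{ad}y)\,x$, and then showing that its vanishing forces $\h_m$ to be abelian when $m\ge3$, replaces the paper's chase through the coefficients $b^t_{uv}$ and $a^t_k$ with a single structural statement.

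The problem in Step 2 is a degree count. The form you differentiate, $\varphi^3\wedge\dots\wedge\varphi^{l_0}$, has degree $l_0-2=\tilde p$, so its differential has degree $l_0-1=\tilde p+1$. You can see this directly in the right-hand sides you wrote, $\varphi^3\wedge\dots\wedge\varphi^{l_0-1}\wedge\varphi^1\wedge(\varphi^{l_0}+c\,\varphi^2)$ and $\varphi^1\wedge\dots\wedge\varphi^{l_0-1}$: each has $l_0-1$ factors. A non-zero exact decomposable $(\tilde p+1,0)$-form only obstructs $(n-\tilde p-1)=(p-1)$-K\"ahler structures, so it gives no contradiction with the hypothesis. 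These are exactly the constructions of the Proposition preceding the Lemma, which obstruct $(n-l_0+1)$-K\"ahler structures; when $l_0=\tilde p+2$ that means $(p-1)$-K\"ahler. Note also that your forms are $(l_0-1)$-forms in the $l_0$-dimensional space $\langle\varphi^1,\dots,\varphi^{l_0}\rangle$, so their decomposability is automatic. You have silently returned to codimension one and sidestepped precisely the difficulty you correctly anticipated.

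\textbf{How to repair it.} Drop one more closed factor and differentiate the $(\tilde p-1)$-form $\varphi^3\wedge\dots\wedge\varphi^{\tilde p}\wedge\varphi^{\tilde p+2}$. This gives $\pm\varphi^3\wedge\dots\wedge\varphi^{\tilde p}\wedge d\varphi^{\tilde p+2}$.
\begin{itemize}
\item If $\xi_{l_0}=0$: the result lies in $\Lambda^{\tilde p}\langle\varphi^1,\dots,\varphi^{\tilde p+1}\rangle$, so it is automatically decomposable. Choose the closed basis so that $\eta_{l_0}$ has a non-zero $\varphi^1\wedge\varphi^2$-coefficient; that coefficient then appears in front of $\varphi^1\wedge\dots\wedge\varphi^{\tilde p}$, so the form is non-zero.
\item If $\xi_{l_0}=\varphi^1\neq0$: decomposability is no longer automatic, since you have a $\tilde p$-form in a $(\tilde p+2)$-dimensional space. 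Here you need an ingredient your proposal never uses. From $d^2\varphi^{l_0}=0$ you get $\xi_{l_0}\wedge\eta_{l_0}=0$, hence $\eta_{l_0}=\varphi^1\wedge\mu$. So $d\varphi^{l_0}=\varphi^1\wedge(\varphi^{l_0}+\mu)$ is decomposable, and wedging it with $\varphi^3\wedge\dots\wedge\varphi^{\tilde p}$ gives a non-zero exact decomposable $(\tilde p,0)$-form.
\end{itemize}
This is essentially what the paper does. It writes $\eta_{\tilde p+2}=\sum_{s\le r_0}\varphi^{2s-1}\wedge\varphi^{2s}$ and uses $\xi\wedge\eta=0$ to split into the case $r_0>1$, which forces $\xi=0$, and the case $r_0\le1$, where $d\varphi^{\tilde p+2}$ lies in $\Lambda^2$ of a three-dimensional space.
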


\begin{proof}
Let us consider a basis $\{\varphi^{1}, \dots, \varphi^{n}\}$ of $(1,0)$-forms as in \eqref{Nakamura basis}, with
\begin{equation}\label{ab}
    \xi_{j} = \sum_{s=1}^{j-1} a^{j}_{s} \varphi^{s},\quad\quad \eta_{j} = \sum_{u<v<j} b^{j}_{uv} \varphi^{uv}
\end{equation}
for $a^{j}_{s},b^{j}_{uv} \in \C$. 
For every $1 \le u < v \le \tilde{p}+1$, the $(\tilde p,0)$-form $\beta \coloneqq d \pt{\varphi^{1\cdots\hat{u}\cdots\hat{v}\cdots\tilde{p}+1}} $ is exact and decomposable for dimensional reasons, because by \eqref{new nakamura basis} it is in the space spanned by $\varphi^{1}, \dots, \varphi^{\tilde{p}+1}$.
Thus, $\beta=0$, since $(\g,J)$ admits a $p$-K\"ahler structure by assumption, and $\g$ is unimodular. 
We can compute
    \begin{equation*}
        \begin{aligned}
            0=\beta = d \pt{\varphi^{1\cdots\hat{u}\cdots\hat{v}\cdots\tilde{p}+1}} 
            = &  \sum_{t=u+1}^{v-1} (-1)^{t} \varphi^{1\cdots\hat{u}\cdots t-1}\wedge d\varphi^t \wedge \varphi^{t+1\cdots \hat{v}\cdots\tilde{p}+1} \\
            & + \sum_{t=v+1}^{\tilde{p}+1} (-1)^{t-1} \varphi^{1\cdots\hat{u}\cdots \hat{v}\cdots t-1}\wedge d\varphi^t \wedge \varphi^{t+1\cdots\tilde{p}+1} ,
        \end{aligned}
    \end{equation*}
    where the second equality holds because, for $t<u$, the wedge product $\varphi^{1\cdots t-1}\wedge d\varphi^t $ has to vanish  by \eqref{new nakamura basis}.
Using again \eqref{new nakamura basis}, together with \eqref{ab}, we obtain
    \begin{equation}\label{beta=0}
        \begin{aligned}
           0= \beta 
            = & \sum_{t=v+1}^{\hat{p}+1} (-1)^{u+v+1} b_{uv}^{t} \varphi^{1\cdots\hat{t}\cdots\tilde{p}+1} 
          +(-1)^{u+1}\pt{\sum_{t=u+1, t\neq v}^{\tilde{p}+1} a^{t}_{u}} \varphi^{1 \cdots \hat{v}\cdots \tilde{p}+1} 
            \\ & 
            +(-1)^v \pt{\sum_{t=v+1}^{\tilde{p}+1} a^{t}_{v}}  \varphi^{1 \cdots \hat{u}\cdots \tilde{p}+1} .
        \end{aligned}
    \end{equation}
    In particular, $b^{t}_{uv} = 0$ for all $t = v+1, \dots, \tilde{p}+1$, and since $u,v$ are arbitrary integers, we get that $\eta_{j}=0$ for all $j=1, \dots, \tilde{p}+1$. 
    Moreover, by \eqref{beta=0}, and once again because $u,v$ are unrestricted, it follows that
    \begin{equation}\label{system}
        \sum_{\substack{t=k+1\\t\neq q}}^{\tilde{p}+1} a^{t}_{k} = 0, \quad \sum_{t=l+1}^{\tilde{p}+1} a^{t}_{l} = 0,
    \end{equation}
    for all $k,l,q\in\pg{1, \dots, \tilde p + 1}$, with $k<q$.
Subtracting the first equation from the second one, for $k=l$, we obtain that $a_k^q=0$, for all $1 \le k<q \le \tilde{p}+1$.
Taking this into account in \eqref{ab}, we get $\xi_{j} = 0$  and thus $d \varphi^{j} = 0$, for all $j=1, \dots, \tilde p + 1$.

We are left to prove that $d \varphi^{\tilde{p}+2} =0$.
We can always assume that 
    \begin{equation*}
        \eta_{\tilde{p}+2} = \sum_{s=1}^{r_0} \varphi^{2s - 1} \wedge \varphi^{2s},
        \quad
        0\le r_0\le\left\lfloor{\frac{\tilde{p}+1}{2}}\right\rfloor,
    \end{equation*}
    where $r_0=0$ denotes the case $\eta_{\tilde{p}+2} =0.$
    By $d^{2} \varphi^{\tilde{p}+2} = 0$, we have that $\xi_{\tilde{p}+2}\wedge\eta_{\tilde{p}+2}=0$.
    If $r_0>1$, then this implies $\xi_{\tilde{p}+2} = 0$, indeed from \eqref{ab}, we obtain
    \begin{equation}\label{aggiunto ora}
        0=\xi_{\tilde{p}+2}\wedge\eta_{\tilde{p}+2}= \sum_{s=1}^{r_{0}} \xi_{\tilde{p}+2} \wedge \varphi^{2s-1} \wedge \varphi^{2s},
    \end{equation}
    Since $\xi_{\tilde{p}+2}$ is a $1$-form, all the summands in the right hand side of \eqref{aggiunto ora} are linearly independent, so they all have to vanish.
    If $r_0>1$, in particular $\xi_{\tilde{p}+2} \wedge \varphi^{1} \wedge \varphi^{2}=\xi_{\tilde{p}+2} \wedge \varphi^{3} \wedge \varphi^{4}=0$, hence $\xi_{\tilde p +2} = 0$.
    
    In this case, 
    \begin{equation*}
    d\pt{\varphi^3\wedge\dots\wedge\varphi^{\tilde{p}}\wedge\varphi^{\tilde{p}+2}}
    =\varphi^1\wedge\dots\wedge\varphi^{\tilde{p}}
    \end{equation*}
is a non vanishing $(\tilde{p},0)$-form, which contradicts the existence of $p$-K\"ahler structures on $(\g,J)$. 
Thus, $\eta_{\tilde p + 2} = a\varphi^{12}$, for $a\in\pg{0,1}$ and $d \varphi^{\tilde{p}+2} \in \Lambda^{2}\langle\varphi^{1}, \varphi^{2}, \varphi^{\tilde p + 2} \rangle$ is decomposable.
In particular,
\begin{equation*}
    d \pt{\varphi^{\tilde{p}+2} \wedge \varphi^{3} \wedge \dots \wedge \varphi^{\tilde{p}}}
 =    \pt{d\varphi^{\tilde{p}+2}} \wedge \varphi^{3} \wedge \dots \wedge \varphi^{\tilde{p}}
\end{equation*}
is also decomposable and of type $(\tilde p,0)$, so it has to vanish because $(\g,J)$ is $p$-K\"ahler and $\g$ is unimodular. 
This implies that $d \varphi^{\tilde{p}+2} =0$ because, as noted above, $d \varphi^{\tilde{p}+2}$ is in $\Lambda^{2}\langle\varphi^{1}, \varphi^{2}, \varphi^{\tilde p + 2} \rangle$.
\end{proof}

Before moving on to the proof of the Alessandrini-Bassanelli conjecture, we show that the bound $\tilde p + 2$ in \Cref{tanti nulli} is sharp, namely there is a Lie algebra satisfying the assumptions of Lemma \ref{tanti nulli}, with $ d \varphi^{j}\neq 0$, for all $j\ge \tilde{p}+3$ in \eqref{new nakamura basis}. 
The example consists in a non-nilpotent, solvable, unimodular Lie algebra of complex dimension $6$, admitting $4$-K\"ahler structures. As shown in \cite[Propositions 5.5, 5.6]{LT}, this is the lowest dimension in which examples of this kind can occur.

\begin{example}
    Let $(\g,J)$ be defined by the following complex structure equations; 
    \begin{equation*}
        \begin{cases}
             d \varphi^{j} = 0, &  j = 1, \dots, 4, \\
             d \varphi^{5} = - \varphi^{15} + \varphi^{23},  \\
             d \varphi^{6} = \varphi^{16} + \varphi^{34}.
        \end{cases}
    \end{equation*}
    We will now show that there is a $4$-K\"ahler structure on $(\g,J)$ by proving that there are no non-vanishing, decomposable and exact $(2,0)$-forms. 
    Suppose that $\sigma$ is an exact $(2,0)$-form, then 
    \begin{equation*}
        \sigma = a\pt{ - \varphi^{15} + \varphi^{23}} + b \pt{\varphi^{16} + \varphi^{34}},
    \end{equation*}
    where $a, b \in \C$.    
    Note that  
    \begin{equation*}
        \sigma^{2} = 2 \pt{a^{2} \varphi^{1235} + b^{2} \varphi^{1346} + ab \varphi^{1236} - ab \varphi^{1345}}, 
    \end{equation*}
    and so $\sigma^{2} = 0$ if and only if $a=b=0$. Since the square of a decomposable $(2,0)$-form always vanishes, the thesis follows by \Cref{AB3.2}.
\end{example}

Using \Cref{tanti nulli}, we can easily prove the Alessandrini-Bassanelli conjecture for non-abelian, unimodular solvable Lie algebras.

\begin{theorem}\label{Disney upppp}
     Assume $\mathfrak g$ non-abelian, unimodular solvable with a biinvariant complex structure, and let $n$ be the complex dimension of $(\g,J)$. Suppose that $(\mathfrak g,J)$ admits a $p$-K\"ahler structure. Then it admits a $(p+1)$-K\"ahler structure.
\end{theorem}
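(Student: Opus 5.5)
The plan mirrors the proof of Theorem \ref{polis}: wedge the given $p$-K\"ahler structure with a closed $(1,1)$-form built from enough closed $(1,0)$-forms, and use Lemma \ref{ambrozia} for transversality.

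First we dispose of the trivial cases. If $p=n-1$, then $(p+1)$-K\"ahler means $n$-K\"ahler, and any strictly positive volume form works, since top-degree forms are closed. If $p=1$, the algebra is K\"ahler, and the obstruction of Lemma \ref{tanti nulli} and of the preceding proposition rules this out for non-abelian $\g$. To see this directly, a compact K\"ahler holomorphically parallelizable manifold is a torus; at the Lie algebra level, $\tilde p=n-1$ and the preceding proposition forbids $p=n-l_0\ge 1$ whenever $l_0\le n-1$. Hence $p=1$ cannot occur except in degenerate small dimensions, which we handle by hand. When $n\le 3$ every relevant $p$ is $1$ or $n-1$, so we may assume $n\ge4$ and $1<p<n-1$.

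In that range Lemma \ref{tanti nulli} applies. It gives a basis $\pg{\varphi^1,\dots,\varphi^n}$ of $(1,0)$-forms with $d\varphi^l=0$ for $l=1,\dots,\tilde p+2$. In particular there are $\tilde p=n-p$ linearly independent closed $(1,0)$-forms $\varphi^1,\dots,\varphi^{\tilde p}$. Given a $p$-K\"ahler structure $\Omega$ on $(\g,J)$, define
\begin{equation*}
\widetilde\Omega\coloneqq \Omega\wedge i\sum_{j=1}^{\tilde p}\varphi^j\wedge\overline{\varphi^j}.
\end{equation*}
Each $\varphi^j$ is closed, so $\overline{\varphi^j}$ is closed as well, and therefore $d\widetilde\Omega=0$. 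Taking $k=\tilde p=n-p$ in Lemma \ref{ambrozia} shows that $\widetilde\Omega$ is transverse. Hence $\widetilde\Omega$ is a $(p+1)$-K\"ahler structure on $(\g,J)$; this is essentially the argument of the Remark after Proposition \ref{Lie algebra con particolari ipotesi}, with $h^{1,0}_{BC}\ge n-p$. Iterating then yields all $q\ge p$.

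All the real work sits in Lemma \ref{tanti nulli}, which we may assume. The main obstacle is bookkeeping the edge cases: making sure the hypotheses $n\ge4$ and $1<p<n-1$ of Lemma \ref{tanti nulli} cover everything not settled trivially, and in particular ruling out, or handling directly, the case $p=1$ for non-abelian $\g$. For that case the first thing to try is the preceding proposition applied with $l_0$ the first index where $d\varphi^{l_0}\neq0$. If that fails to apply cleanly, we use instead that a closed positive $(1,1)$-form on a unimodular non-abelian algebra is obstructed by the exact decomposable form $d\varphi^{l_0}$ through Proposition \ref{AB3.2} adapted to unimodular algebras.
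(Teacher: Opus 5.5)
Your core argument is the paper's proof: take the basis of Lemma \ref{tanti nulli}, set $\widetilde\Omega=\Omega\wedge i\sum_{s=1}^{\tilde p}\varphi^s\wedge\overline{\varphi^s}$, get closedness from $d\varphi^s=0$ and transversality from Lemma \ref{ambrozia} with $k=\tilde p$. You are also right that Lemma \ref{tanti nulli} needs $n\ge4$ and $1<p<n-1$, which the paper's proof does not mention. Your treatment of $p=n-1$ by a volume form is correct.

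The one step that fails as written is your treatment of $p=1$.
\begin{itemize}
\item The proposition preceding Lemma \ref{tanti nulli} only forbids $p=n-l_0$ and $p=n-l_0+1$. So it excludes $p=1$ only when $l_0\in\{n-1,n\}$, not in general.
\item Your fallback has the wrong bidegree. The exact $(2,0)$-form $d\varphi^{l_0}$ can only obstruct $(n-2)$-K\"ahler structures, and $n-2=1$ only when $n=3$.
\item The fact that compact K\"ahler holomorphically parallelizable manifolds are tori is a statement about manifolds. It does not apply to Lie algebras that may have no lattice.
\end{itemize}

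You do not need to rule out $p=1$ at all. If $\omega$ is a K\"ahler form, then $\omega^2$ is closed and strictly strongly positive, hence a $2$-K\"ahler structure. If you want the stronger fact that $p=1$ never occurs, write $\omega=i\sum h_{j\bar k}\varphi^j\wedge\overline{\varphi^k}$. For biinvariant $J$, the $(2,1)$-component of $d\omega$ is $i\sum_{j,k}h_{j\bar k}\,d\varphi^j\wedge\overline{\varphi^k}$. Its vanishing, with $(h_{j\bar k})$ nondegenerate, forces all $d\varphi^j=0$, i.e.\ $\g$ is abelian.
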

\begin{proof}
    Let $\{\varphi^{1}, \dots, \varphi^{n}\}$ be a basis as in \Cref{tanti nulli} and let $\Omega$ be the $p$-K\"ahler structure on $\mathfrak g$. The form defined as 
    \begin{equation*}
        \widetilde{\Omega} \coloneqq i \, \Omega \wedge \sum_{s=1}^{\tilde p} \varphi^{s} \wedge \varphi^{\overline{s}}
    \end{equation*}
    is transverse by \Cref{ambrozia}.
    Furthermore, by \Cref{tanti nulli} the $\varphi^s$ are all closed, so $\tilde\Omega$ is a $(p+1)$-K\"ahler structure on $\mathfrak g$.
\end{proof}

Next, we show that non-abelian, unimodular solvable Lie algebra with a biinvariant complex structure cannot admit $p$-K\"ahler structures, for low  $p$.

\begin{theorem}\label{n2Solv}
    On a non-abelian, unimodular solvable Lie algebra $\mathfrak g$ equipped with a biinvariant complex structure $J$, with complex dimension $n$, there are no $p$-K\"ahler structures for $p \le \lfloor{\frac{n}{2}}\rfloor$.
\end{theorem}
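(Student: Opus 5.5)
The plan is to argue by contradiction, turning the structural information from \Cref{tanti nulli} into an explicit non-zero, exact, decomposable $(\tilde p,0)$-form. Such a form is excluded by the unimodular version of \Cref{AB3.2}/\Cref{86volte} (see \Cref{remark di unimodularita}).

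\emph{Small cases.} For $p=1$, a $1$-K\"ahler structure is a K\"ahler metric. A unimodular Lie algebra with biinvariant complex structure cannot be K\"ahler unless it is abelian, since any $d\varphi^{l_0}\neq 0$ gives a non-zero exact holomorphic $1$-form wedged against the balanced/K\"ahler form. Equivalently, we can run the argument below with $\tilde p=n-1$. The cases $n\le 3$ are then covered by $p=1$, so we may assume $n\ge4$ and $2\le p\le\lfloor n/2\rfloor<n-1$.

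\emph{Setup via \Cref{tanti nulli}.} Suppose $\Omega$ is a $p$-K\"ahler structure. \Cref{tanti nulli} gives a basis with $d\varphi^l=0$ for $l\le \tilde p+2$. Let $l_0$ be the first index with $d\varphi^{l_0}\neq0$; it exists since $\g$ is non-abelian. Then $m:=l_0-1\ge \tilde p+2=n-p+2$, and
\[
d\varphi^{l_0}=\xi\wedge\varphi^{l_0}+\eta,
\qquad \xi\in\langle\varphi^1,\dots,\varphi^m\rangle,\ \eta\in\Lambda^2\langle\varphi^1,\dots,\varphi^m\rangle,
\]
where all $\varphi^1,\dots,\varphi^m$ are closed. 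We look for exact forms $d(\varphi^{l_0}\wedge\varphi^J)=\pm d\varphi^{l_0}\wedge\varphi^J$, with $J$ a multi-index of $\tilde p-1$ elements of $\{1,\dots,m\}$.

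\emph{Case $\xi=0$.} Normalize $\eta=\sum_{s=1}^r\varphi^{2s-1}\wedge\varphi^{2s}$ by a linear change among the closed forms, with $1\le r\le m/2\le (n-1)/2$. Choose $J$ to contain exactly one index from each pair $\{2s-1,2s\}$ with $s\ge2$, say $3,5,\dots,2r-1$, and no index from $\{1,2\}$. Complete $J$ with indices in $\{2r+1,\dots,m\}$. Then
\[
\eta\wedge\varphi^J=\varphi^{12}\wedge\varphi^J,
\]
which is non-zero and decomposable. The counting needed is $r-1\le\tilde p-1$, which follows from $r\le (n-1)/2<n-p$ because $p\le n/2$. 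We also need enough free indices, namely $(r-1)+(m-2r)\ge \tilde p-1$, i.e.\ $m-r-1\ge\tilde p-1$. Since $r\le m/2$, this reduces to $m/2\ge\tilde p$, and $m\ge n-p+2$ alone does not give it. If it fails, we instead also allow the second member of pairs into $J$, where it kills terms, which makes counting easier. This bookkeeping is precisely where $p\le\lfloor n/2\rfloor$ must be used sharply, and I expect it to be the main obstacle. The first fallback is to apply \Cref{Disney upppp} to replace $p$ by $q=\lfloor n/2\rfloor$, making $\tilde q$ as small as allowed, and then check the inequalities at that extremal value.

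\emph{Case $\xi\neq0$.} Normalize $\xi=\varphi^1$. From $d^2\varphi^{l_0}=0$ we get $\xi\wedge\eta=0$, hence $\eta=\varphi^1\wedge\gamma$ with $\gamma$ closed. Therefore
\[
d\varphi^{l_0}=\varphi^1\wedge\psi,\qquad \psi:=\varphi^{l_0}-\gamma.
\]
Taking $J\subset\{2,\dots,m\}$ with $|J|=\tilde p-1$, which is possible since $m-1\ge\tilde p+1$, the form $\varphi^1\wedge\psi\wedge\varphi^J$ is exact, non-zero and decomposable, giving the desired contradiction. This case is in fact independent of $p\le n/2$ once $m\ge\tilde p$, so the real content of the bound lies entirely in the $\xi=0$ case.
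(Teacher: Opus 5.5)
\textbf{There is a genuine gap: your construction is off by one in degree, so neither case yields a contradiction as written.}

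Since $\varphi^J$ is closed, $d(\varphi^{l_0}\wedge\varphi^J)=d\varphi^{l_0}\wedge\varphi^J$ has degree $2+|J|$. With $|J|=\tilde p-1$ you therefore produce an exact decomposable $(\tilde p+1,0)$-form. By \Cref{86volte}, in its unimodular form (\Cref{remark di unimodularita}), such a form obstructs $(p-1)$-K\"ahler structures, not $p$-K\"ahler ones. Since \Cref{Disney upppp} only propagates existence upwards, this says nothing about the assumed $p$-K\"ahler structure.

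A sanity check confirms that the argument cannot be right as stated. On $\eta\beta_5$ take $p=3$, so $\tilde p=2$. The conclusion of \Cref{tanti nulli} holds, $l_0=5$, $\xi=0$, $\eta=\varphi^{12}+\varphi^{34}$ and $r=2$, so the condition your construction actually needs, $r-1\le\tilde p-1$, is satisfied. Your recipe with $J=\{3\}$ gives $d(\varphi^5\wedge\varphi^3)=\varphi^{123}$. If this obstructed $3$-K\"ahler structures, it would contradict \Cref{exetabeta}. In fact $r\le\tilde p$ still holds at $p=\lfloor n/2\rfloor+1$ in every dimension. This is why, in your version, the hypothesis $p\le\lfloor n/2\rfloor$ seems to enter only with slack, despite the sharpness shown in \Cref{rmkn2}.

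\textbf{The repair.} Take $|J|=\tilde p-2$, which is non-negative because $\tilde p\ge\lceil n/2\rceil\ge 2$.
\begin{itemize}
\item For $\xi\neq0$, any $J\subset\{2,\dots,m\}$ of this size gives the non-zero decomposable exact $(\tilde p,0)$-form $\varphi^1\wedge\psi\wedge\varphi^J$.
\item For $\xi=0$, choose $J\subset\{3,\dots,m\}$ meeting every pair $\{2s-1,2s\}$ with $2\le s\le r$. Allow both members of a pair, so no further bookkeeping is needed. Such a $J$ exists if and only if $r-1\le\tilde p-2$ and $\tilde p\le m$. The second inequality comes from \Cref{tanti nulli}. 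The first, $r\le n-p-1$, is guaranteed by $r\le\lfloor (n-1)/2\rfloor$ precisely because $p\le\lfloor n/2\rfloor=n-1-\lfloor (n-1)/2\rfloor$. This is where the bound is used, and it is used sharply.
\end{itemize}

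\textbf{Comparison with the paper.} With this correction your argument is a valid alternative route. The paper first uses \Cref{Disney upppp} to reduce to $p=\lfloor n/2\rfloor$. It then obstructs some $q$-K\"ahler structure with $q\ge\lfloor n/2\rfloor$, using an exact decomposable form of degree $r_0+1$ built from $d\varphi^{l_0}$, or $d\varphi^{l_0}$ itself when $\eta$ has rank at most one. It concludes by applying \Cref{Disney upppp} again, so the degree never has to match $\tilde p$ exactly. You instead use the closed forms $\varphi^1,\dots,\varphi^{\tilde p+2}$ supplied by \Cref{tanti nulli} to pad the obstruction to degree exactly $\tilde p$, which bypasses the Alessandrini--Bassanelli step.

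\textbf{A minor slip in the $p=1$ paragraph.} $d\varphi^{l_0}$ is an exact holomorphic $2$-form, not a $1$-form. The clean argument is that $\sigma\wedge\bar\sigma\wedge\omega^{n-2}$, with $\sigma=d\varphi^{l_0}$ and $\omega$ K\"ahler, is an exact, non-zero, sign-definite top form. Your alternative of ``running the argument with $\tilde p=n-1$'' is not available, because \Cref{tanti nulli} requires $p>1$.
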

\begin{proof}
    By \Cref{Disney upppp} it is enough to show that there are no $\lfloor{\frac{n}{2}}\rfloor$-K\"ahler structures. Notice that, for dimensions $n=4,5$ the thesis follows from \Cref{tanti nulli}.
    
    Suppose now that $n>5$ and let $l_{0}\le n$ be the index such that $d \varphi^{l_{0}} \neq 0$ and $d \varphi^{l} = 0$, for all $l < l_{0}$.
Write $d \varphi^{l_0} = \xi \wedge \varphi^{l_0} + \eta $ as in \eqref{Nakamura basis}, with $\xi \in\langle\varphi^1,\dots,\varphi^{l_0-1}\rangle$. 
We can argue as in the proof of Lemma \ref{tanti nulli} and write
    \begin{equation}\label{etas0}
        \eta = \sum_{s=1}^{r_0} \varphi^{2s - 1} \wedge \varphi^{2s},
        \quad
        0\le r_0\le\left\lfloor{\frac{l_0-1}{2}}\right\rfloor,
    \end{equation}
    where $r_0=0$ denotes the case $\eta=0.$
    As above, by $d^{2} \varphi^{l_0} = 0$, we have $\xi = a_{1} \varphi^{1} + a_{2} \varphi^{2}$, $a_1,a_2\in\C$. 
    
    If $r_0>1$, then $\xi = 0$ and
    \begin{equation*}
        d\pt{\varphi^{l_0} \wedge \bigwedge_{k=2}^{\left\lfloor{\frac{l_0+1}{2}}\right\rfloor} \varphi^{2k}}
        =\varphi^1\wedge\bigwedge_{k=1}^{\left\lfloor{\frac{l_0+1}{2}}\right\rfloor} \varphi^{2k}
    \end{equation*}
    contradicts the existence of $p$-K\"ahler structures, for 
    $$
    p=n-\pt{\left\lfloor{\frac{l_0-1}{2}}\right\rfloor + 1}
    =n-\left\lfloor{\frac{l_0+1}{2}}\right\rfloor
    \ge n-\left\lceil{\frac{n}{2}}\right\rceil
    =\left\lfloor{\frac{n}{2}}\right\rfloor,
    $$
    where the inequality holds because $l_0\le n$.
    By Theorem \ref{Disney upppp}, this always contradicts the existence of $\lfloor{\frac{n}{2}}\rfloor$-K\"ahler structures on $(\g,J).$
    
    If $r_0\le1$ in \eqref{etas0}, then there is $a_3\in\pg{0,1}$ such that $d \varphi^{l_0} = (a_{1} \varphi^{1} + a_{2} \varphi^{2}) \wedge \varphi^{l_0} + a_3\varphi^{12}$, which is decomposable for dimensional reasons. 
    Since this is also non-vanishing by assumption, it obstructs the existence of $(n-2)$-K\"ahler structures.
    Recall that $n>5$, so $n-2\ge\lfloor{\frac{n}{2}}\rfloor$ and we conclude once again by \Cref{Disney upppp}.
\end{proof}

\begin{remark}\label{rmkn2}
    The bound $p\le{n/2}$ in \Cref{n2Solv} is sharp, namely there are examples of non-abelian nilpotent Lie algebras with biinvariant complex structures, which are $p$-K\"ahler with $p=\lfloor{n/2}\rfloor+1$, in each complex dimension $n\ge 3$.
    Indeed, for odd complex dimension, it is enough to consider the manifolds $\eta\beta_{2m+1}$, for $m\ge 1$, introduced in \Cref{exetabeta}.
    As noted above, they are $p$-K\"ahler if and only if $p\ge m+1$.
Counterexamples in even complex dimension can be obtained as follows: let $T$ be a $1$-dimensional complex torus, and observe that the direct product $\eta\beta_{2m+2}\coloneqq\eta\beta_{2m+1}\times T$ is in fact $(m+2)$-K\"ahler, by \cite[Theorem 4.5]{AB}.
\end{remark}

In the following, we characterize non-abelian, unimodular solvable Lie algebra with non-trivial center of complex dimension $n$, with biinvariant complex structures that admit $(\lfloor{n/2}\rfloor +1)$-K\"ahler structures. 
We recall that the rank of a non-zero $(2,0)$-form $\eta$ is the integer $1 \leq r_0 \le n/2$ such that 
\begin{equation*}
    \eta = \sum_{s=1}^{r_0} f^{2s-1} \wedge f^{2s},
\end{equation*}
where $\{f^{1}, \dots, f^{2r_0}\}$ are linearly independent $(1,0)$-forms.
Before showing the characterization, we need the following technical lemma.

\begin{lemma}\label{quanto l'avevi scritto male}
     Assume that $\mathfrak g$ is nilpotent and $J$ a biinvariant complex structure, such that $(\g,J)$ is $p$-K\"ahler.
     Let $\pg{\varphi^{1}, \dots, \varphi^{n}}$ be a $J$-adapted basis, with  $l_0$ the index such that 
    \begin{equation*}
        d \varphi^{j}=0, \quad j<l_{0}, \quad\quad d\varphi^{l} \ne 0, \quad  l \ge l_{0}.
    \end{equation*}
    Then, $d\varphi^{l_0}$ has rank $r_0>n-p-1$.
\end{lemma}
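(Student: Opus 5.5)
The plan is to argue by contradiction. Assuming $r_0\le n-p-1=\tilde p-1$, we will produce a non-zero, exact, decomposable $(\tilde p,0)$-form on $(\g,J)$. Since $\g$ is nilpotent, hence unimodular, this is impossible for a $p$-K\"ahler Lie algebra: it contradicts the Lie algebra version of Remark \ref{86volte}, equivalently of Proposition \ref{AB3.2}, as explained in Remark \ref{remark di unimodularita}.

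First, we need enough closed $(1,0)$-forms. By Theorem \ref{polis}, $(\g,J)$ is $q$-K\"ahler for every $q\ge p$. Lemma \ref{nonsisa} then gives a $J$-adapted basis with $d\varphi^j=0$ for $j\le \tilde p+1$. Since $J$ is biinvariant, $d\varphi^j$ is of type $(2,0)$ for every $j$. I expect the main subtlety to be making sure the closedness given by Lemma \ref{nonsisa} applies to the fixed basis in the statement. Taking the basis of the statement to be one provided by Lemma \ref{nonsisa}, or arguing that the index of the first non-closed element cannot be smaller than $\tilde p+2$, we may assume $l_0\ge \tilde p+2$. Hence $W:=\langle\varphi^1,\dots,\varphi^{l_0-1}\rangle$ consists of closed forms and $\dim W\ge \tilde p+1$.

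By the nilpotent structure equations \eqref{nilpCS}, together with biinvariance, we have $d\varphi^{l_0}\in\Lambda^2 W$. We can therefore write $d\varphi^{l_0}=\sum_{s=1}^{r_0}f^{2s-1}\wedge f^{2s}$ with $f^1,\dots,f^{2r_0}$ linearly independent elements of $W$, all of which are closed. Set $F=f^4\wedge f^6\wedge\dots\wedge f^{2r_0}$, an $(r_0-1)$-form. In $dF$-free form we get
\[
d\pt{\varphi^{l_0}\wedge F}=d\varphi^{l_0}\wedge F=f^1\wedge f^2\wedge f^4\wedge\dots\wedge f^{2r_0},
\]
because every summand with $s\ge2$ contains $f^{2s}$ twice. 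This is a non-zero decomposable $(r_0+1,0)$-form.

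To reach degree $\tilde p$, we pad with $m=\tilde p-r_0-1\ge0$ further elements $g^1,\dots,g^m\in W$. We choose them so that $f^1,f^2,f^4,\dots,f^{2r_0},g^1,\dots,g^m$ are linearly independent. This is possible because these are $\tilde p$ vectors and $\dim W\ge\tilde p+1$, and because $f^1,f^2,f^4,\dots,f^{2r_0}$ are already independent. Since the $g^k$ are closed, the form
\[
\beta=d\pt{\varphi^{l_0}\wedge F\wedge g^1\wedge\dots\wedge g^m}=f^1\wedge f^2\wedge f^4\wedge\dots\wedge f^{2r_0}\wedge g^1\wedge\dots\wedge g^m
\]
is exact, non-zero and decomposable of type $(\tilde p,0)$. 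This is the desired contradiction, so $r_0>n-p-1$.
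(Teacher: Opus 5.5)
Your argument is correct, but it reaches the conclusion by a different route than the paper. The paper builds essentially the same exact decomposable $(r_0+1,0)$-form, $d\varphi^{l_0}\wedge\bigwedge_{s\ge2}\eta^1_s$. By Remark~\ref{86volte}, in its unimodular version, this shows that $(\g,J)$ is not $(n-r_0-1)$-K\"ahler. The paper then invokes Theorem~\ref{polis}: since $(\g,J)$ is $q$-K\"ahler for every $q\ge p$, necessarily $n-r_0-1<p$. That route uses the basis fixed in the statement directly and needs no count of closed $(1,0)$-forms.

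You instead pad the obstruction with closed $(1,0)$-forms up to degree $\tilde p$ and contradict $p$-K\"ahlerness directly. Theorem~\ref{polis} still enters, through Lemma~\ref{nonsisa}, to guarantee that enough closed forms exist. Your version exhibits an explicit obstruction in degree $\tilde p$. It also makes visible the mechanism dual to the construction $\Omega\wedge i\sum\varphi^j\wedge\overline{\varphi^j}$ in Theorem~\ref{polis}: wedging with closed $(1,0)$-forms raises the degree of an obstruction. The paper's version is shorter and avoids any question about the choice of basis.

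That question is the one loose end in your write-up. Your first option, replacing the given basis by one from Lemma~\ref{nonsisa}, is not legitimate: $l_0$ and $d\varphi^{l_0}$ depend on the basis fixed in the statement. Your second option is correct, but you assert it without proof. The one-line justification is as follows. The given basis satisfies the hypotheses of Proposition~\ref{STthm}, with exactly $l_0-1$ closed elements. So by Remark~\ref{remark di unimodularita}, $(\g,J)$ is not $(n-l_0+1)$-K\"ahler. Theorem~\ref{polis} then forces $n-l_0+1<p$, that is, $l_0\ge\tilde p+2$.

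A simpler fix is available, because the padding forms $g^k$ only need to be closed, not to lie in $W$. Lemma~\ref{nonsisa} shows that the space of closed $(1,0)$-forms has dimension at least $\tilde p+1$, independently of any basis. You can therefore extend $f^1,f^2,f^4,\dots,f^{2r_0}$ to $\tilde p$ linearly independent closed forms inside that space. The basis issue then disappears. (You also only need dimension at least $\tilde p$ here, not $\tilde p+1$.)
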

\begin{proof}
    We first note that, by \Cref{nonsisa} and \Cref{polis}, there exist a $J$-adapted basis such that $d \varphi^{j}=0$, for all $j = 1, \dots, n-p+1$.
    By definition of $r_0$, we can write
    \begin{equation*}
        d\varphi^{l_0}=\sum_{s=1}^{r_{0}} \eta^1_{s} \wedge \eta_{s}^{2},
    \end{equation*}
    for some linearly independent $\eta_s^k\in\langle \varphi^{1} ,\dots, \varphi^{l_0-1} \rangle$, $s=1,\dots,r_0$, $k=1,2$. Since $d\varphi^{l_0}\neq0$, $r_0>0$.
    Then,
    \begin{equation*}
        d \pt{\varphi^{l_{0}} \wedge \bigwedge_{s=2}^{r_0} \eta_s^1} = d \varphi^{l_{0}} \wedge \bigwedge_{s=2}^{r_0} \eta_s^1 = \eta_1^1\wedge \eta_1^2 \wedge \bigwedge_{s=2}^{r_0} \eta_s^1,
    \end{equation*}
     is a non-zero, decomposable $(r_0 + 1,0)$-form, so by Remark \ref{86volte} there are no $(n-r_0-1)$-K\"ahler structures on $(\g,J)$.
    By assumption, this can only happen if $n-r_0-1<p$, as wanted.
\end{proof}

We can now prove the following theorem.  

\begin{proposition}\label{lowestp}
    Let $\g$ be a non-abelian, {unimodular, solvable} Lie algebra {with non-trivial center} of real dimension $2n\ge10$, equipped with a biinvariant complex structure $J$.
    Assume $(\g,J)$ is $(m+1)$-K\"ahler, with $m=\lfloor{n/2}\rfloor$. Then,
    \begin{enumerate}[(i)]
        \item\label{odd} if $n=2m+1$ is odd, then $\mathfrak g \cong Lie(\eta \beta_{2m+1})$;
        \item\label{even} if $n=2m$ is even, then $\mathfrak g$ admits a $J$-adapted basis $\pg{\varphi^{1}, \dots, \varphi^{2m}}$ such that 
    \begin{equation*}
        \begin{cases}
            d \varphi^{j} = 0,     &  j =1, \dots, 2m-2, \\
            \displaystyle d \varphi^{2m-1} = \sum_{j=1}^{m-1} \varphi^{2j-1}\wedge \varphi^{2j}, \\
            \displaystyle d \varphi^{2m} = \sum_{j=1}^{m-1} \eta_{2j-1} \wedge \eta_{2j},
        \end{cases}
    \end{equation*}
    where the $\eta_{j} \in \langle \varphi^{1} ,\dots, \varphi^{2m-2} \rangle$ are either all $0$, or linearly independent.
    \end{enumerate}
\end{proposition}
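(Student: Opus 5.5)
We start from a basis of $(1,0)$-forms as in \eqref{Nakamura basis}, with the same strategy as in \Cref{tanti nulli}, taking $p=m+1$ so that $\tilde p=n-m-1$. By \Cref{tanti nulli} we may assume $d\varphi^l=0$ for $l\le \tilde p+2$.

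\emph{Odd case.} For $n=2m+1$ we have $\tilde p+2=m+2$. Let $l_0$ be the first index with $d\varphi^{l_0}\neq0$ and write $d\varphi^{l_0}=\xi\wedge\varphi^{l_0}+\eta$. Arguing as in the proof of \Cref{n2Solv}, nonzero exact decomposable forms are obstructed only in bidegree $(\tilde p,0)=(m,0)$, or lower by \Cref{Disney upppp}.
\begin{itemize}
\item If $\xi\neq0$, then $d(\varphi^2\wedge\dots\wedge\varphi^{l_0})$ gives a decomposable exact form of degree $l_0-1\ge m$. Together with its subforms of lower degree, this contradicts $p$-K\"ahlerness.
\item Hence $\xi=0$, so $d\varphi^{l_0}=\eta$ has rank $r_0$. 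The construction in \Cref{quanto l'avevi scritto male} (which only uses $d$ of a wedge of $\varphi^{l_0}$ with closed forms) produces an exact decomposable $(r_0+1,0)$-form. This forces $r_0+1>\tilde p=m$, i.e.\ $r_0\ge m$. Since $2r_0\le l_0-1\le n-1=2m$, we get $r_0=m$ and $l_0=n$.
\end{itemize}
Thus $\varphi^1,\dots,\varphi^{2m}$ are closed and $d\varphi^{2m+1}=\sum_{s=1}^m\varphi^{2s-1}\wedge\varphi^{2s}$ after a change of basis, which is the structure \eqref{etabeta} of $\eta\beta_{2m+1}$. The non-trivial center hypothesis is not even needed here, although it is consistent with the conclusion.

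\emph{Even case.} For $n=2m$ we have $\tilde p=m-1$. The same analysis gives $\xi_{l_0}=0$ and $r_0\ge m-1$. Since $2r_0\le l_0-1$, this forces $l_0\ge 2m-1$, so $l_0\in\{2m-1,2m\}$.

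Next we use the non-trivial center. If $l_0=2m$, then $\varphi^1,\dots,\varphi^{2m-1}$ are closed and $d\varphi^{2m}$ has rank $m-1$ or larger, and we relabel to reach the stated form with $d\varphi^{2m-1}=0$. I expect that case to be reconciled with the stated normal form by reordering basis elements, and I would check it against the statement. Otherwise $l_0=2m-1$, and we normalize $d\varphi^{2m-1}=\sum_{j=1}^{m-1}\varphi^{2j-1}\wedge\varphi^{2j}$.

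It remains to treat $d\varphi^{2m}=\xi\wedge\varphi^{2m}+\eta$. Unimodularity together with $d^2=0$ should force $\xi=0$; non-trivial center is what excludes the genuinely solvable twisting. So $d\varphi^{2m}=\eta$ with $\eta\in\Lambda^2\langle\varphi^1,\dots,\varphi^{2m-1}\rangle$, and subtracting a multiple of $\varphi^{2m}$'s relation, or a multiple of $d\varphi^{2m-1}$, removes the dependence on $\varphi^{2m-1}$.

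\emph{Main obstacle.} The hardest step is showing that $\eta$ is either zero or of full rank $m-1$ on $\langle\varphi^1,\dots,\varphi^{2m-2}\rangle$. I would apply the obstruction in bidegree $(m-1,0)$, via Proposition \ref{AB3.2}, to exact forms
\[
d\bigl(\varphi^{2m}\wedge\theta\bigr)+c\,d\bigl(\varphi^{2m-1}\wedge\theta'\bigr),
\]
with $\theta,\theta'$ wedges of closed forms. If $\eta$ has rank $r_0$ with $0<r_0<m-1$, then choosing $\theta$ from $r_0-1$ of the $\eta$-factors gives an exact decomposable $(r_0+1,0)$-form of degree at most $m-1$, which is a contradiction. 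More generally, each nonzero combination $a\,d\varphi^{2m-1}+b\,d\varphi^{2m}$ must have rank at least $m-1$. Combined with the dimension count $2(m-1)$, this yields the dichotomy, after a further change of basis mixing $\varphi^{2m-1}$ and $\varphi^{2m}$. This pencil-rank argument is where I expect the real difficulty.
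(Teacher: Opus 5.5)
Your route is genuinely different from the paper's. The paper uses the non-trivial center to choose a $J$-invariant central ideal $\ab$, and gets from \Cref{enrico} that $\g/\ab$ is $m$-K\"ahler. It then deduces that $\g/\ab$ is abelian for $n=2m+1$ (by \Cref{n2Solv}) and $\cong Lie(\eta\beta_{2m-1})$ for $n=2m$ (by the odd case), and concludes with the Jacobi identity and the rank argument of \Cref{quanto l'avevi scritto male}. You instead analyse directly the first non-closed form $\varphi^{l_0}$ of a basis as in \eqref{Nakamura basis}, using \Cref{Disney upppp} to rule out non-zero exact decomposable $(k,0)$-forms for every $k\le\tilde p$. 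This skeleton is sound. Once repaired, it never uses the center (your appeals to it are not actually needed), so it proves the statement without that hypothesis. It also treats explicitly the case $l_0=2m$: normalizing $d\varphi^{2m}$ and swapping the roles of $\varphi^{2m-1},\varphi^{2m}$ lands in the alternative $\eta_j=0$. This is a case the paper's reduction to case (i) tacitly skips when $\g/\ab$ is abelian.

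Two steps, however, fail as written, and both are repaired by $d^2=0$.

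First, your exclusion of $\xi\neq0$. After normalizing $\xi=\varphi^1$, the form $d(\varphi^2\wedge\dots\wedge\varphi^{l_0})=\pm\varphi^1\wedge\dots\wedge\varphi^{l_0}$ has degree $l_0\ge\tilde p+3$, beyond the range $k\le\tilde p$ where you have obstructions. The lower-degree forms $d(\varphi^k\wedge\dots\wedge\varphi^{l_0})=\pm\varphi^k\wedge\dots\wedge\varphi^{l_0-1}\wedge(\xi\wedge\varphi^{l_0}+\eta)$ are not decomposable a priori. What works is the case distinction from the proof of \Cref{n2Solv}. Since $\xi,\eta$ only involve closed forms, $0=d^2\varphi^{l_0}=-\xi\wedge\eta$. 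So if $\xi\neq0$, then $\eta=\xi\wedge\zeta$, and $d\varphi^{l_0}=\xi\wedge(\varphi^{l_0}+\zeta)$ is itself a non-zero exact decomposable $(2,0)$-form. This is excluded since $2\le\tilde p$ (as $m\ge2$, resp.\ $m\ge3$).

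Second, in the even case, the component $\varphi^{2m-1}\wedge\alpha$ of $d\varphi^{2m}$, with $\alpha\in\langle\varphi^1,\dots,\varphi^{2m-2}\rangle$, cannot be removed by a change of basis. Replacing $\varphi^{2m}$ by $\varphi^{2m}-c\,\varphi^{2m-1}$ changes $d\varphi^{2m}$ only by $-c\,\omega$, and $\omega=d\varphi^{2m-1}$ has no $\varphi^{2m-1}$-component. The component is killed by $0=d^2\varphi^{2m}=\omega\wedge\alpha$, since $\omega\wedge\cdot$ is injective on $1$-forms when $\omega$ is non-degenerate of rank $m-1\ge2$. This is the paper's Jacobi identity step. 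Also, $\xi_{2m}=0$ follows from unimodularity alone, which in a basis as in \eqref{Nakamura basis} means $\sum_j\xi_j=0$.

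Finally, the step you flag as the main obstacle is not one. Once $d\varphi^{2m}\in\Lambda^2\langle\varphi^1,\dots,\varphi^{2m-2}\rangle$, the rank argument you already give, applied to $\varphi^{2m}$ alone, yields rank $0$ or $m-1$. That is exactly the stated dichotomy, and it is the paper's last step. The pencil argument would in fact give more. Each non-zero $a\,\omega+b\,d\varphi^{2m}$ must be non-degenerate, but its Pfaffian is a non-zero binary form of degree $m-1\ge1$ in $(a,b)$, hence vanishes somewhere on $\C^2\setminus\{0\}$. Thus $d\varphi^{2m}\in\C\,\omega$, and only $\g\cong Lie(\eta\beta_{2m-1})\oplus\C$ occurs.
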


\begin{proof}
{Since} $\mathfrak g$ has non trivial center {and $J$ is biinvariant, there is a}  $J$-invariant central ideal $\mathfrak a$ of $\mathfrak g$, of real dimension $2$. Then by {\Cref{enrico}}, $\mathfrak h = \mathfrak g / \mathfrak a$ is $m$-K\"ahler. 

If we are in case \ref{odd}, $\mathfrak h$ is {solvable} and of complex dimension $2m$, so by {\Cref{n2Solv}} it is abelian.
    Thus, there exists a $J$-adapted basis for $\g$ $\pg{\varphi^{1}, \dots, \varphi^{2m+1}}$ with $d\varphi^{j}=0$, for $j \le2m$.
    By unimodularity and by \eqref{Nakamura basis}, we can assume that $d\varphi^{2m+1}$ is spanned by $\pg{\varphi^{1}, \dots, \varphi^{2m}}$, so $\g$ is nilpotent.
    Furthermore, 
    by \Cref{quanto l'avevi scritto male}, $d \varphi^{2m+1}$ has rank $r_0$, with 
    \begin{equation*}
        m\ge r_0>(2m+1)-(m+1)-1=m-1.
    \end{equation*}
    We obtain the statement comparing this result with \eqref{etabeta}.

On the other hand, if $\g$ is in case \ref{even}, then $\h$ still satisfies the assumptions of the theorem, and is in case \ref{odd}, so by the first part of the proof $\mathfrak h \cong Lie(\eta \beta_{2m-1})$.
{Once again, $\g$ is nilpotent by unimodularity, so} there is a $J$-adapted basis $\pg{\varphi^{1}, \dots, \varphi^{2m}}$ of $\g$ such that 
    \begin{equation}\label{strEqBoh}
        \begin{cases}
            d \varphi^{j} = 0, & j =1, \dots, 2m-2, \\
            \displaystyle d \varphi^{2m-1} = \sum_{j=1}^{m-1} \varphi^{2j-1}\wedge \varphi^{2j}, &\\
            \displaystyle d \varphi^{2m} = \sum_{s=1}^{s_0} \eta_{2s-1} \wedge \eta_{2s} + \sum_{l=1}^{2m-2} a_{l} \, \varphi^{2m-1} \wedge\varphi^{l},&
        \end{cases}
    \end{equation}
    where $a_1,\dots,a_{2m-2}\in\C$ and either $s_0=0$, or $\pg{\eta_{s},1\le s\le 2s_0} \subset \langle \varphi^{1} ,\dots, \varphi^{2m-2} \rangle$ is linearly independent.
    We will show that $a_l=0$, for all $l\le 2m-2$, by the Jacobi identity. 
    Denote by $\pg{Z_{1}, \dots, Z_{2m}}$ the dual basis of $\pg{\varphi^{1}, \dots, \varphi^{2m}}$.
    For $j < m$, $l \le 2m-2$ and $l \neq 2j, 2j-1$, we have by \eqref{strEqBoh} that
    \begin{equation*}
        0=\operatorname{Jac}(Z_{2j-1},Z_{2j}, Z_{l}) =- [Z_{2m-1}, Z_{l}] = a_{l} Z_{2m}.
    \end{equation*}    
Arguing as in the proof of \Cref{quanto l'avevi scritto male}, we find that
    \begin{equation*}
        d \pt{\varphi^{2m} \wedge \bigwedge_{s=2}^{s_0} \eta_{2s-1}} = \eta_{1}\wedge\eta_{2} \wedge \bigwedge_{s=2}^{s_0} \eta_{2s-1}, 
    \end{equation*}
    which is a decomposable $(s_0+1,0)$-form. 
    If $s_0>0$, this obstructs the existence of $(2m-s_0-1)$-K\"ahler structures, allowing us to conclude that $s_0=m-1$.
    \end{proof}

Next, we show that in this setting the Bott-Chern class of a $p$-K\"ahler structure is always non-vanishing. 

\begin{proposition}\label{coooomoooology}
    Let $\g$ be a non-abelian, unimodular solvable Lie algebra with a biinvariant complex structure $J$, and let $n$ be the complex dimension of $(\g,J)$. If $(\g,J)$ admits a $p$-K\"ahler structure $\Omega$, then $[\Omega]_{BC} \neq 0$.
\end{proposition}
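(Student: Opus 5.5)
Suppose by contradiction that $[\Omega]_{BC}=0$, i.e.\ $\Omega=i\partial\bar\partial\beta$ for some $(p-1,p-1)$-form $\beta$ on $\g$. I will pair $\Omega$ with a suitable closed, weakly positive $(\tilde p,\tilde p)$-form $\Theta$ such that $\Omega\wedge\Theta$ is a nonzero top form. On the other hand, $\Omega\wedge\Theta$ is exact, hence zero because $\g$ is unimodular (Remark \ref{remark di unimodularita}).

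To build $\Theta$, I use Lemma \ref{tanti nulli} when $1<p<n-1$, and treat the extreme values of $p$ separately (see the last paragraph). In the main case, Lemma \ref{tanti nulli} gives a basis $\pg{\varphi^1,\dots,\varphi^n}$ with $d\varphi^l=0$ for $l\le\tilde p+2$. Set $\psi:=\varphi^1\wedge\dots\wedge\varphi^{\tilde p}$, a closed, decomposable, holomorphic $(\tilde p,0)$-form, and
\begin{equation*}
\Theta:=i^{\tilde p^2}\psi\wedge\overline\psi .
\end{equation*}
Then $d\Theta=0$. Since $\Omega$ is transverse and $\psi\neq0$ is decomposable, $\Omega\wedge\Theta$ is a strictly positive volume form.

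For exactness, I write $\Omega\wedge\Theta=i\partial\bar\partial\beta\wedge\Theta$. Because $d\Theta=0$ and $\Theta$ has pure type, $\partial\Theta=\bar\partial\Theta=0$. Hence $\partial\bar\partial\beta\wedge\Theta=d\pt{\bar\partial\beta\wedge\Theta}$ up to sign, where I use that $\partial$ and $\bar\partial$ are the type components of $d$ on the Lie algebra cochains; this holds since $J$ is integrable. So $\Omega\wedge\Theta$ is $d$-exact and therefore vanishes by unimodularity, which is the desired contradiction. The only point needing care is the invariant Stokes argument: on a unimodular Lie algebra, $d\colon\Lambda^{2n-1}\g^*_\C\to\Lambda^{2n}\g^*_\C$ is zero. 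This is exactly the fact quoted in Remark \ref{remark di unimodularita}.

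The extreme values of $p$ need separate handling, and they are where I expect the only real obstacle. In the main case I only need the single closed decomposable form $\psi$ of degree $\tilde p$, so the argument is routine once Lemma \ref{tanti nulli} is available.
\begin{itemize}
\item For $p=n$, every $d$-exact top-degree form vanishes, so $\Omega\ne0$ cannot be $\partial\bar\partial$-exact.
\item For $p=n-1$, I take $\Theta=i\varphi^1\wedge\overline{\varphi^1}$, using that $d\varphi^1=0$ in the basis \eqref{Nakamura basis}.
\item For $p=1$, a solvable non-abelian $\g$ is not K\"ahler. This also follows from Lemma \ref{tanti nulli}, which gives no $2$-K\"ahler structures, combined with Theorem \ref{Disney upppp}. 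So $p=1$ does not occur.
\item For $p$ in the range $1<p<n-1$ but with $n\le3$: when $n\le 3$, this range forces $n=3$ and $p=1$, which falls under the previous item.
\end{itemize}
For $n\ge4$ and $1<p<n-1$, Lemma \ref{tanti nulli} applies directly.
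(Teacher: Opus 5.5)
Your proof is correct and follows the paper's route. Lemma \ref{tanti nulli} supplies the closed decomposable form $\varphi^1\wedge\dots\wedge\varphi^{\tilde p}$, and your pairing with $i^{\tilde p^2}\psi\wedge\overline{\psi}$, together with the vanishing of exact top forms on a unimodular algebra, is a direct proof of the criterion the paper cites as \cite[Theorem 6.1]{Log2025}; your separate handling of $p=n-1,n$ also covers cases the paper passes over.

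The only flaw is your justification for $p=1$, which fails at $n=3$. Lemma \ref{tanti nulli} needs $n\ge4$, and when $n=3$ we have $2=n-1$, so $2$-K\"ahler structures always exist. The case needs no argument anyway: on $\g$ a $(0,0)$-form is constant, so $\partial\bar\partial\beta=0$ and no non-zero $(1,1)$-form is Bott--Chern exact.
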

\begin{proof}
    If $(\g,J)$ admits a $p$-K\"ahler structure, then by \Cref{tanti nulli} there exists a basis of $(1,0)$-forms $\{\varphi^{1}, \dots, \varphi^{n}\}$ satisfying \eqref{new nakamura basis}.
    Thus the $(\tilde p,0)$-form $\eta = \varphi^{1} \wedge \dots \wedge \varphi^{\tilde p}$ is non-vanishing, decomposable and closed. 
    The thesis follows by \cite[Theorem 6.1]{Log2025}.
\end{proof}

\begin{remark}
    We recall that, on holomorphically parallelizable solvmanifolds, the Bott–Chern cohomology cannot be computed merely by means of left-invariant forms; in general, one needs the techniques developed in \cite{AngellaKasuya}.
    Nevertheless, if a $p$-K\"ahler structure exists on such a manifold, we can always show that it gives a non-vanishing class in the Bott-Chern cohomology, even when the $p$-K\"ahler structure is not left-invariant.
    Indeed, applying the symmetrization procedure (\Cref{simmetrizzazioni}), we can construct a $p$-K\"ahler structure on the Lie algebra $(\g, J)$. 
    Therefore, by \Cref{tanti nulli}, there exists a basis of $(1,0)$-forms $\{\varphi^{1}, \dots, \varphi^{n}\}$ on $(\g,J)$ satisfying \eqref{new nakamura basis}. 
    Extending these forms to left-invariant forms on the whole solvmanifold $(\Gamma \backslash G, J)$, we can apply the same argument as in \Cref{coooomoooology} to construct a non-vanishing, decomposable and closed $(\tilde p,0)$-form on $(\Gamma \backslash G, J)$.
\end{remark}

We conclude the section with the case of complex almost abelian Lie algebras.
In the same notation as \cite[Section 4]{Stanfield2021}, a complex almost abelian Lie algebra is a complex lie algebra admitting a complex codimension $1$ abelian ideal.

\begin{proposition}
    There are no $p$-K\"ahler structures for $p<n-1$ on non-abelian, unimodular complex almost abelian Lie algebras, endowed with a biinvariant complex structure, of complex dimension $n$. 
\end{proposition}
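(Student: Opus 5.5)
The plan is to exhibit a non-zero exact decomposable holomorphic $2$-form, which rules out $(n-2)$-K\"ahler structures, and then use \Cref{Disney upppp} to rule out every $p<n-1$. Since $\tilde p=n-p\ge 2$ when $p<n-1$, it suffices by that theorem to show that $(\g,J)$ has no $(n-2)$-K\"ahler structure. Indeed, if $(\g,J)$ were $p$-K\"ahler for some $p\le n-2$, iterating \Cref{Disney upppp} would make it $(n-2)$-K\"ahler. The theorem applies because a complex almost abelian Lie algebra is $2$-step solvable (it has an abelian ideal with abelian quotient), and by assumption it is non-abelian and unimodular.

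Write $\g=\C X\ltimes_A\C^{n-1}$, where $\C^{n-1}$ is the abelian ideal and $A=\operatorname{ad}_X|_{\C^{n-1}}$. Unimodularity gives $\operatorname{tr}A=0$, and non-abelianity gives $A\neq0$. Using the isomorphism \eqref{k0isok}, I would choose a basis of $(1,0)$-forms $\varphi^1,\dots,\varphi^n$ with $\varphi^1$ dual to $X$ and vanishing on the ideal, and $\varphi^2,\dots,\varphi^n$ dual to a basis of the ideal. Since the ideal is abelian and $[X,\cdot]$ acts on it by $A$, the structure equations become
\[
d\varphi^1=0,\qquad d\varphi^j=\varphi^1\wedge\psi^j,\quad \psi^j\in\langle\varphi^2,\dots,\varphi^n\rangle,\quad j=2,\dots,n,
\]
where $\psi^j$ is, up to sign, the image of $\varphi^j$ under the transpose of $A$. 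Because $A\neq0$, some $\psi^{j}\neq0$, so $d\varphi^{j}=\varphi^1\wedge\psi^{j}$ is a non-zero, exact, decomposable $(2,0)$-form. It is holomorphic, being of type $(2,0)$ on a Lie algebra with biinvariant complex structure.

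By the unimodular version of \Cref{AB3.2} (equivalently of Remark \ref{86volte}, which holds on unimodular Lie algebras via Remark \ref{remark di unimodularita}), such a form obstructs $(n-2)$-K\"ahler structures. Concretely, $(d\varphi^j)\wedge\overline{(d\varphi^j)}$ is an exact, non-zero multiple of $\psi\wedge\bar\psi$ with $\psi=\varphi^1\wedge\psi^j$ decomposable of type $(\tilde p,0)$, $\tilde p=2$. So Proposition \ref{OstrHMT} applies in its unimodular form. This rules out $(n-2)$-K\"ahler structures and, by the first paragraph, all $p<n-1$.

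There is no real obstacle; the steps needing care are bookkeeping. One is checking that \Cref{Disney upppp} applies, since almost abelian implies solvable. The other is the degenerate dimensions. For $n=2$ the statement is empty. For $n=3$ we need $p=1$, which is the same $(n-2)$ case already handled.
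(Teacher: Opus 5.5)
Your proof is correct and is essentially the paper's own argument. The paper likewise uses the almost abelian structure equations $d\varphi^j=\varphi^1\wedge\sigma_j$ to obtain a non-zero exact decomposable holomorphic $2$-form, which rules out $(n-2)$-K\"ahler structures via Remark \ref{86volte} in its unimodular form, and then extends this to all $p<n-1$ by \Cref{Disney upppp}; the only cosmetic difference is that the paper triangularizes so that $\sigma_j\in\langle\varphi^1,\dots,\varphi^{j-1}\rangle$, which your argument does not need.
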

\begin{proof}
    Let $\mathfrak g$ be non-abelian, unimodular and complex almost abelian, with $J$ biinvariant. 
    Then, there exists a basis $\{\varphi^{1}, \dots, \varphi^{n}\}$ of $(1,0)$-forms such that 
    \begin{equation*}
        d \varphi^{j} = \varphi^{1} \wedge \sigma_{j}, \quad \forall j=1, \dots, n,
    \end{equation*}
    where $\sigma_{j} \in\langle \varphi^{1}, \dots , \varphi^{j-1}\rangle$. By Remark \ref{86volte}, there are no $(n-2)$-K\"ahler structures, unless the Lie algebra is abelian. By \Cref{Disney upppp} we conclude that there are no $p$-K\"ahler structures for $p<n-1$. 
\end{proof}

\subsection{The reductive case}

We will now focus on holomorphically parallelizable manifolds which are quotients of reductive complex Lie groups.
Equivalently, we will consider complex reductive Lie algebras, endowed with biinvariant complex structures.

Recall that a Lie algebra $\g$ is called reductive if it is the direct sum of an abelian Lie algebra $\ab$ and a semisimple one $\g_s$.
The results presented in this section will use the root space decomposition of semisimple Lie algebras, see for instance \cite[Sections 3--5]{helgason}.
Let $\h$ be a Cartan subalgebra of $\g_s$, $\Delta$ a set of roots for $\h$ and $B$ the Killing form of $\g_s$.
For $\alpha\in\Delta$ we will denote by $H_\alpha\in\h$ the $B$-dual of $\alpha$, and by $\g_\alpha$ the $1$-dimensional $\alpha$-eigenspace of $\operatorname{ad}_\h$.
Recall that 
\begin{equation*}
    \g_s=\h\oplus\bigoplus_{\alpha\in\Delta}\g_\alpha.
\end{equation*}
In the following, it will be useful to fix a preferred basis of $\g_s$, described as follows.

\begin{proposition}[{\cite[Theorem III.5.5]{helgason}}]\label{Xalpha}
Let $\h$ and $\Delta$ be fixed. There exist $X_a\in\g_\alpha$, $\alpha\in\Delta$, such that, for $\alpha,\beta\in\Delta$,
\begin{enumerate}[label=\roman*.]
\item $\pq{X_\alpha,X_{-\alpha}}=H_\alpha$,
\item $[ H , X_\alpha] = \alpha ( H ) X_\alpha$, for all $ H \in\h$,
\item $[X_\alpha, X_\beta] = N_{\alpha,\beta}X_{\alpha+\beta}$, if $\alpha+\beta\in\Delta$,
\end{enumerate}
with $N_{\alpha,\beta}\in\C$ such that $N_{\alpha,\beta}=-N_{-\alpha,-\beta}$. 
\end{proposition}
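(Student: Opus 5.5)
The plan is the classical argument through a Chevalley involution, and it proceeds in three steps.

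\textbf{Step 1: the involution.} First, produce an automorphism $\theta$ of $\g_s$ with $\rest{\theta}{\h}=-\mathrm{id}_\h$. Since $-\mathrm{id}$ maps $\Delta$ onto $\Delta$ and preserves the Killing form on $\h$, the isomorphism theorem for semisimple Lie algebras (Helgason, Ch.~III) extends $-\mathrm{id}_\h$ to an automorphism $\theta$ of $\g_s$. Such a $\theta$ satisfies $\theta(\g_\alpha)=\g_{-\alpha}$ for every $\alpha\in\Delta$, because $[H,\theta X]=\theta[-H,X]=-\alpha(H)\,\theta X$ for $X\in\g_\alpha$.

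We also want $\theta^2=\mathrm{id}$. To get this, fix a base $\Pi$ of simple roots and Chevalley generators $E_i\in\g_{\alpha_i}$, $F_i\in\g_{-\alpha_i}$, and choose $\theta$ so that $\theta(E_i)=-F_i$ and $\theta(F_i)=-E_i$; this is possible by the Serre relations. Then $\theta^2$ is an automorphism fixing $\h$ and all the generators, so $\theta^2=\mathrm{id}$. I expect this step, the existence of an involutive $\theta$, to be the main obstacle; everything after it is elementary normalization.

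\textbf{Step 2: normalization.} Split $\Delta=\Delta^+\sqcup(-\Delta^+)$. For each $\alpha\in\Delta^+$, pick $0\neq Y_\alpha\in\g_\alpha$ and set $Y_{-\alpha}:=-\theta(Y_\alpha)\in\g_{-\alpha}$. Since $B$ pairs $\g_\alpha$ nondegenerately with $\g_{-\alpha}$ and these spaces are one-dimensional, $c_\alpha:=B(Y_\alpha,Y_{-\alpha})\neq0$. Let $\lambda$ be a square root of $c_\alpha^{-1}$ and put $X_\alpha:=\lambda Y_\alpha$ and $X_{-\alpha}:=-\theta(X_\alpha)=\lambda Y_{-\alpha}$. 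Then $B(X_\alpha,X_{-\alpha})=1$, and, using $\theta^2=\mathrm{id}$, we get $\theta(X_\beta)=-X_{-\beta}$ for every $\beta\in\Delta$.

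\textbf{Step 3: the relations.} We check the three properties in turn.

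\begin{enumerate}[label=\roman*.]
\item The standard identity $[X,Y]=B(X,Y)H_\alpha$ for $X\in\g_\alpha$, $Y\in\g_{-\alpha}$ follows from $B(H,[X,Y])=B([H,X],Y)=\alpha(H)B(X,Y)$. Combined with $B(X_\alpha,X_{-\alpha})=1$, it gives $[X_\alpha,X_{-\alpha}]=H_\alpha$.
\item This holds by definition of $\g_\alpha$.
\item When $\alpha+\beta\in\Delta$, $[X_\alpha,X_\beta]$ lies in the one-dimensional space $\g_{\alpha+\beta}$, which defines $N_{\alpha,\beta}$. Applying $\theta$ to $[X_\alpha,X_\beta]=N_{\alpha,\beta}X_{\alpha+\beta}$ gives on the left $[\theta X_\alpha,\theta X_\beta]=[X_{-\alpha},X_{-\beta}]=N_{-\alpha,-\beta}X_{-\alpha-\beta}$, and on the right $-N_{\alpha,\beta}X_{-\alpha-\beta}$. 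Hence $N_{\alpha,\beta}=-N_{-\alpha,-\beta}$.
\end{enumerate}
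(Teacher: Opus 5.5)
Your proof is correct and is essentially the standard argument behind Helgason's Theorem III.5.5, which the paper cites without giving a proof of its own. As there, you extend $-\mathrm{id}_{\h}$ to an automorphism $\theta$, normalize the root vectors so that $B(X_\alpha,X_{-\alpha})=1$ and $\theta(X_\alpha)=-X_{-\alpha}$, and then apply $\theta$ to $[X_\alpha,X_\beta]=N_{\alpha,\beta}X_{\alpha+\beta}$.

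The step you flag as the main obstacle is in fact automatic, so the detour through Chevalley generators and Serre relations is unnecessary. For any automorphism with $\theta|_{\h}=-\mathrm{id}_{\h}$, write $\theta(X_{\pm\alpha})=c_{\pm\alpha}X_{\mp\alpha}$. Invariance of the Killing form then gives $c_\alpha c_{-\alpha}=1$, which means $\theta^2=\mathrm{id}$ on every root space, and hence on all of $\g_s$.
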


The constant $N_{\alpha,\beta}^2$ depends on the \textit{$\alpha$-string containing $\beta$}, namely the set 
\begin{equation*}
\pg{\alpha+k\beta\in\Delta,\,k\in\Z }.
\end{equation*}
More precisely, let $k^-,k^+$ be the unique integers such that $\alpha+k\beta\in\Delta$ if and only if $k^-\le k\le k^+$.
Note that $k^-\le 0\le k^+$, because $\alpha\in\Delta$.
Then, 
\begin{equation}
    N_{\alpha,\beta}^2=\frac{k^+(1-k^-)}{2}\alpha(H_\alpha).
\end{equation}
Furthermore,
\begin{equation}\label{k01}
    -2\beta(H_\alpha)=\pt{k^-+k^+}\alpha(H_\alpha).
\end{equation}

\begin{lemma}\label{lRoots}
If $\mathfrak g$ is a complex simple Lie algebra of rank at least $2$, then for any Cartan subalgebra $\mathfrak h$, and root system $\Delta$, every $\alpha\in\Delta$ satisfies one of the following:
\begin{enumerate}[label=(\roman*)]
    \item\label{case1} there exists $\beta\in\Delta$ with $\alpha+\beta\in\Delta$;
    \item\label{case2} there exist $\phi,\psi\in\Delta$ such that $\alpha=\phi+\psi$.
\end{enumerate}
\end{lemma}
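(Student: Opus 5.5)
Since the root system $\Delta$ of a simple Lie algebra is irreducible, and rank at least $2$ means $\Delta$ is not of type $A_1$, the plan is to show that in fact case \ref{case1} always holds. Case \ref{case2} then only serves as a convenient alternative formulation. Fix $\alpha\in\Delta$. The goal is to produce $\beta\in\Delta$ with $\alpha+\beta\in\Delta$, using the string relation \eqref{k01} together with irreducibility.

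\textbf{Step 1 (a non-orthogonal root).} Because $\mathfrak g$ is simple, $\Delta$ is irreducible. Hence it cannot split as $\{\pm\alpha\}\sqcup\Delta'$ with $\Delta'\perp\alpha$ unless $\Delta=\{\pm\alpha\}$. That would make the rank $1$, which is excluded. So there exists $\gamma\in\Delta$ with $\gamma\neq\pm\alpha$ and $\gamma(H_\alpha)\neq0$, i.e. $B(H_\gamma,H_\alpha)\neq0$. Replacing $\gamma$ by $-\gamma$ if needed, we may assume $\gamma(H_\alpha)<0$ (real-valued on the real span of the $H_\alpha$).

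\textbf{Step 2 (apply the string relation).} Consider the $\alpha$-string through $\gamma$, i.e. the roots $\gamma+k\alpha$. It runs over an unbroken interval $k^-\le k\le k^+$, and the relation \eqref{k01} with the roles set up accordingly gives
\begin{equation*}
k^-+k^+=-\frac{2\gamma(H_\alpha)}{\alpha(H_\alpha)}>0.
\end{equation*}
Since $k^-\le0$, this forces $k^+\ge1$, so $\gamma+\alpha\in\Delta$. Taking $\beta=\gamma$ then gives case \ref{case1}.

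\textbf{Main obstacle: indexing.} The only delicate point is bookkeeping. The paper's formula \eqref{k01} is written with the string defined as $\{\alpha+k\beta\}$ but with $\beta(H_\alpha)$ on the left. I will therefore restate the standard fact in the needed direction: for the $\alpha$-string $\gamma+k\alpha$ one has $k^-+k^+=-2\gamma(H_\alpha)/\alpha(H_\alpha)$ (see \cite[Ch. III]{helgason}), and I will use it in that form. One must also check that $\alpha(H_\alpha)=B(H_\alpha,H_\alpha)>0$, which is standard. A sanity check on $A_2$, $B_2$ and $G_2$ confirms the argument: every root has a neighbour at an obtuse angle.
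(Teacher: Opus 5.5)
Your proof is correct and is essentially the paper's argument: both take a root $\beta\neq\pm\alpha$ with $\beta(H_\alpha)\neq0$ and apply the string relation \eqref{k01}; you justify the existence of $\beta$ via irreducibility, whereas the paper only invokes the rank. The only difference is that you fix the sign of $\beta$ so that $k^-+k^+>0$ forces $\alpha+\beta\in\Delta$ directly, while the paper only uses that the string is nontrivial, landing in (i) or (ii); this is harmless, since (i) and (ii) are in fact equivalent ($\alpha+\beta\in\Delta$ gives $\alpha=(\alpha+\beta)+(-\beta)$, and $\alpha=\phi+\psi$ gives $\alpha+(-\psi)=\phi\in\Delta$).
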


\begin{proof}
Since $\rk(\g)\ge2$, for every $\alpha\in\Delta$ there is $\beta\in\Delta\setminus\pg{\pm\alpha}$ with $\beta(H_\alpha)\neq0$.
Then, the $\alpha$-string containing $\beta$ cannot be empty, for otherwise $k^-=k^+=0$, contradicting \eqref{k01}.
Now, if $k^+>0$, we are in case \ref{case1}, otherwise we must have $k^-<0$, namely $\alpha-\beta\in\Delta$, and $\alpha=(\alpha-\beta)+\beta$, so \ref{case2} holds.
\end{proof}

\begin{lemma}\label{exactdec}
    Let $\g$ be a complex semisimple Lie algebra such that every simple factor has rank greater than $1$, and  $\Psi$ be an exact $2$-form on $\g$.
    If $\Psi^2=0$, then $\Psi=0$.
\end{lemma}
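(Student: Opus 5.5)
The plan is to work in the dual basis of $\g^*$ adapted to the root decomposition. Fix a Cartan subalgebra $\h$, the root system $\Delta$, and the vectors $X_\alpha$ of Proposition \ref{Xalpha}. Let $\{h^i\}\cup\{x^\alpha\}_{\alpha\in\Delta}$ be the dual basis, where $x^\alpha$ vanishes on $\h$ and on every $X_\beta$ with $\beta\neq\alpha$.

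An exact $2$-form is $\Psi=d\theta$ with $\theta\in\g^*$, and $d\theta(X,Y)=-\theta([X,Y])$. Since the Killing form is nondegenerate, $\theta=B(Z,\cdot)$ for a unique $Z\in\g$, and $d\theta(X,Y)=-B(Z,[X,Y])=-B([Z,X],Y)$. Hence the radical of $\Psi$ is exactly the centralizer $\mathfrak c(Z)$. Moreover $\Psi^2=0$ means that $\Psi$ has rank $\le2$, i.e.\ the centralizer of $Z$ has codimension $\le 2$. So the statement reduces to showing: if $Z\neq0$, then $\dim[\g,Z]=\dim\operatorname{ad}_Z(\g)\ge3$, i.e.\ the rank of $\Psi$ is at least $4$.

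Because $\g$ is a direct sum of simple ideals orthogonal for $B$, one has $\operatorname{ad}_Z(\g)=\bigoplus_i\operatorname{ad}_{Z_i}(\g_i)$. It therefore suffices to treat a simple $\g$ of rank $\ge2$. There, I claim every nonzero adjoint orbit has dimension $\ge 4$, equivalently $\dim[\g,Z]\ge 3$; in fact it is even, hence $\ge4$.

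This is where the rank condition and Lemma \ref{lRoots} enter. I would argue concretely as follows.
\begin{itemize}
\item If $Z$ has a nonzero semisimple part, conjugate it into $\h$. Then $[\g,Z]$ contains $\g_\alpha$ for every root with $\alpha(Z_{ss})\neq0$, and such roots come in pairs $\pm\alpha$. Choose $\alpha$ with $\alpha(Z)\neq0$. By Lemma \ref{lRoots} there is $\beta\neq\pm\alpha$ with $\alpha+\beta$ or $\alpha-\beta$ a root, and one of $\beta$, $\alpha\pm\beta$ is nonzero on $Z$. This gives at least four roots, hence $\dim\ge4$.
\item If $Z$ is nilpotent, it lies in a sum of positive root spaces. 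The minimal nilpotent orbit has dimension $2h^\vee-2$, which is $\ge4$ for rank $\ge2$, since $h^\vee\ge3$. To stay self-contained, I would instead compute directly with $Z=X_\gamma$, $\gamma$ the highest root, which represents the minimal orbit inside every orbit closure. Then $[\g,X_\gamma]\ni H_\gamma, X_\gamma$, together with $N_{\beta,\gamma}X_{\beta+\gamma}$ for $\beta$ with $\beta+\gamma\in\Delta$. Lemma \ref{lRoots} case \ref{case2} (or case \ref{case1} applied to $-\gamma$) supplies such a $\beta$ with $N_{\beta,\gamma}\neq0$.
\end{itemize}

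The main obstacle is the nilpotent case. There one must justify that every nonzero orbit has dimension at least that of the minimal one (by semicontinuity and orbit closures), or else run a direct computation from the structure constants $N_{\alpha,\beta}$ and formula \eqref{k01}.

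An alternative, more elementary route is to expand $\Psi=d\theta$ in the basis $\{h^i,x^\alpha\}$ and read off enough independent pairs $x^{\alpha}\wedge x^{-\alpha}$ and $x^\phi\wedge x^\psi$ with nonzero coefficients. These would be supplied, depending on which coordinates of $\theta$ are nonzero, by cases \ref{case1} and \ref{case2} of Lemma \ref{lRoots}. One would then verify that $\Psi^2$ has a surviving coefficient on some $x^{\alpha}\wedge x^{-\alpha}\wedge x^\phi\wedge x^\psi$.
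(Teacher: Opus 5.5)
Your argument is correct, and it takes a genuinely different route from the paper's.

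\textbf{The paper's route.} The paper works entirely with the relations of \Cref{Xalpha}. Writing $\Psi=d\eta$, it evaluates $\Psi^2$ on chosen quadruples of vectors:
\begin{itemize}
\item $H_0, X_{\gamma_k}, X_{-\beta}, X_{\gamma_{k+1}}$ along a root string;
\item then $H, X_\phi, X_\psi, X_{\phi+\psi}$;
\item then combinations such as $X_{\alpha+\beta}-X_\beta$.
\end{itemize}
By induction along root strings it shows that $\eta$ vanishes on every $X_\alpha$, and then on every $H_\alpha$.

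\textbf{Your route.} You use the Killing form to identify $d\theta$ with the form $(X,Y)\mapsto -B(Z,[X,Y])$. Then $\operatorname{rank}\Psi=\dim[\g,Z]$, the dimension of the adjoint orbit of $Z$. The lemma becomes: a simple Lie algebra of rank at least $2$ has no nonzero adjoint orbit of dimension $2$.

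\textbf{Comparison.} Your route explains why rank $1$ must be excluded: $\sll_2\C$ has $2$-dimensional orbits, so there $d\theta$ is decomposable, which matches \Cref{P-K su reductive}. It also gives a sharper statement: a nonzero exact $2$-form on a simple $\g$ has rank at least $\dim\mathcal{O}_{\min}=2h^\vee-2$. The cost is structure theory that the paper avoids entirely: Jordan decomposition, conjugacy into $\h$, and a fact about the minimal nilpotent orbit. The paper's computation, by contrast, needs only \Cref{Xalpha}, the root-string formula and \Cref{lRoots}.

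\textbf{Two steps to tighten.}
\begin{itemize}
\item In your first bullet, $Z$ may not be semisimple, so the inclusion $\g_\alpha\subseteq[\g,Z]$ needs a reason. It holds because $\operatorname{ad}_Z$ is invertible on each nonzero eigenspace of $\operatorname{ad}_{Z_{ss}}$: it is a nonzero scalar plus a commuting nilpotent.
\item The nilpotent case, which you flag, can be closed by citing that $\mathcal{O}_{\min}$ lies in the closure of every nonzero nilpotent orbit. It can also be done without that fact. Conjugate $Z$ into $\bigoplus_{\alpha>0}\g_\alpha$. Degenerate it by $e^{-tm}\operatorname{Ad}(\exp tH)$, with $H\in\h$ generic, to a multiple of a single root vector $X_\beta$. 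Upper semicontinuity of $\dim\mathfrak{c}(\cdot)$ then gives $\dim[\g,Z]\ge\dim[\g,X_\beta]$. For any root $\beta$, \Cref{lRoots} supplies $\delta$ with $\beta+\delta\in\Delta$. Hence $[\g,X_\beta]\supseteq\langle H_\beta,X_\beta,X_{\beta+\delta}\rangle$ has dimension at least $3$. So you do not need $\beta$ to be the highest root.
\end{itemize}
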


\begin{proof}
    Let $\eta$ be a $1$-form such that $\Psi=d\eta$.
    Then, for every $v_1,\dots,v_4\in\g$, we have
    \begin{equation}\label{psi^2}
    \begin{aligned}
        \frac{\Psi^2}{2}\pt{v_1,\dots,v_4}=&\,
        \eta([v_1,v_2])\eta([v_3,v_4])-\eta([v_1,v_3])\eta([v_2,v_4])
        \\ &+ \eta([v_1,v_4])\eta([v_2,v_3]).
    \end{aligned}
    \end{equation}

    Let $\h$ be a Cartan subalgebra of $\g$, $\Delta$ a root system, and fix $H_\alpha,X_\alpha$ as in \Cref{Xalpha}.
    We now assume that $\Psi^2=0$, and we will show that $\eta(H_\alpha)=\eta(X_\alpha)=0$, for every $\alpha\in\Delta$, so in particular $\eta=0$ and $\Psi=0$.
For every $\alpha\in\Delta$, $\alpha$ is contained in one of the simple factors of $\g$, so we can use \Cref{lRoots}.

    If $\alpha$ falls in case \ref{case1}, there is some $\beta\in\Delta\setminus\pg{\pm\alpha}$ such that $\alpha+\beta\in\Delta$, then we consider the $\alpha$-string containing $\beta$,
    \begin{equation*}
        \gamma_k\coloneqq\alpha+k\beta\in\Delta,\,k^-\le k\le k^+ 
    \end{equation*}
    with $k^+\ge1$ since $\alpha+\beta\in\Delta$.
    
We prove by induction that  $\eta(X_{\gamma_k})=0$, for $k^-\le k<k^+$.
For every such $k$, let $H_0\in\ker\beta\setminus\ker\alpha$ such that $\alpha(H_0)=1$.
We note that such a $H_0$ always exists, because $\ker\alpha$ is the $B$-orthogonal complement of $H_\alpha$ in $\h$, for all $\alpha\in\Delta$, and so $\alpha\neq\beta$ if and only if $\ker \alpha\neq\ker\beta$. 
Furthermore, $\ker \alpha$ and $\ker\beta$ have same dimension, so one cannot be strictly contained in the other.

By construction, $\gamma_k(H_0)=1$, for all $k^-\le k\le k^+ .$
We will use equation  \eqref{psi^2}, with 
\begin{equation*}
    v_1=H_0,\quad v_2=X_{\gamma_k},\quad v_3=X_{-\beta},\quad v_4=X_{\gamma_{k+1}}=X_{\gamma_k+\beta}.
\end{equation*}
Then,
\begin{equation}\label{v13v23}
\begin{aligned}
    &\pq{v_1,v_3}=-\beta(H_0)X_{-\beta}=0,
    \\
    &\pq{v_2,v_3}=\begin{cases}
        0 ,  &   \text{if }k=k^-,\\
        N_{\gamma_k,-\beta}X_{\gamma_k-\beta}=N_{\gamma_k,-\beta}X_{\gamma_{k-1}},  &   \text{if }k^-<k<k^+.
    \end{cases}
\end{aligned}
\end{equation}
For $k=k^-$, if $\Psi^2=0$, by \eqref{psi^2} and \eqref{v13v23} we get
\begin{equation*}
    0=\eta([v_1,v_2])\eta([v_3,v_4])=\eta\pt{\gamma_{k^-}(H_0)X_{\gamma_{k^-}}}\eta\pt{c_{\gamma_{k^-},\beta}X_{\gamma_{k^-}}}=c_{\gamma_{k^-},\beta} \pt{\eta\pt{X_{\gamma_{k^-}}}}^2,
\end{equation*}
for some non-zero $c_{\gamma_{k^-},\beta}\in\C$.
It follows that $\eta\pt{X_{\gamma_{k^-}}}=0$.

For the inductive step, assume  that  $\eta(X_{\gamma_{k-1}})=0$, for a fixed $k^-< k<k^+$.
Then, by \eqref{v13v23}, $\eta\pt{\pq{v_2,v_3}}=0$, so  \eqref{psi^2} reads
\begin{equation*}
    0=\eta([v_1,v_2])\eta([v_3,v_4])
    =c_{\gamma_k,\beta} \pt{\eta\pt{X_{\gamma_k}}}^2,
\end{equation*}
with $0\neq c_{\gamma_k,\beta}\in\C$.
Thus $\eta\pt{X_{\gamma_k}}=0$, completing the proof of the induction argument.
Since $k^-\le 0<k^+$, it follows that $\eta\pt{X_\alpha}=0$.

On the other hand, if $\alpha$ is in case \ref{case2}, namely $\alpha=\phi+\psi,$ $\phi,\psi\in\Delta$, we can consider
\begin{equation*}
    v_1=H,\quad v_2=X_{\phi},\quad v_3=X_{\psi},\quad v_4=X_{\phi+\psi}=X_\alpha,
\end{equation*}
$H\in\h$, with 
\begin{equation*}
    \eta\pt{\pq{v_1,v_2}}=-\phi(H)\eta\pt{X_\phi}=0,
\end{equation*}
where the last equality follows from the first part of the proof, as $\phi,\psi$ are in case \ref{case1} of \Cref{lRoots}.
Similarly, $\eta\pt{\pq{v_1,v_3}}=0$, so that 
\begin{equation*}
\begin{aligned}
\Psi^2\pt{v_1,\dots,v_4}=2\eta([v_1,v_4])\eta([v_2,v_3])=2\pt{\phi+\psi}(H)N_{\phi,\psi}\pt{\eta\pt{X_{\phi+\psi}}}^2.
\end{aligned}
\end{equation*}
We can choose $H$ such that $\pt{\phi+\psi}(H)N_{\phi,\psi}\neq0$, to conclude that $\eta\pt{X_{\phi+\psi}}=0.$

To conclude the proof, we show that $\eta(H_\alpha)=0$, for every $\alpha\in\Delta$. First, suppose $\alpha $ falls under case \ref{case1}. Then there exists a $\beta\in\Delta\setminus\pg{\pm\alpha}$ such that $\alpha + \beta \in \Delta$ and we can consider
\begin{equation*}
    v_1= X_{\alpha},\quad v_2=X_{-\alpha},\quad v_3=X_{\alpha +\beta} - X_{\beta},\quad v_4=X_{-\alpha - \beta} + X_{-\beta}.
\end{equation*}
Since $\eta(X_{\gamma}) = 0$, for all $\gamma \in \Delta$, a direct computation gives
\begin{equation*}
    \begin{aligned}
        & \eta([v_{1},v_{3}]) = \eta(N_{\alpha,\alpha +\beta}X_{2\alpha+\beta} - N_{\alpha,\beta}X_{\alpha+\beta})=0, \\
        & \eta([v_{1},v_{4}]) = \eta (N_{\alpha,-\alpha -\beta}X_{\beta} + N_{\alpha,-\beta}X_{\alpha-\beta}) = 0,
    \end{aligned}
\end{equation*}
and 
\begin{equation*}
    \eta([v_{3},v_{4}])= \eta ([X_{\alpha + \beta},X_{-\alpha-\beta}] - [X_{\beta}, X_{-\beta}])= \eta (H_{\alpha + \beta} - H_{\beta}) = \eta(H_{\alpha}).
\end{equation*}
Hence,  
\begin{equation*}
\Psi^2\pt{v_1,\dots,v_4}=2\eta([v_1,v_2])\eta([v_3,v_4])= 2 \eta(H_{\alpha})^{2},
\end{equation*}
and so $\eta(H_{\alpha}) = 0$.
If instead $\alpha$ belongs to case \ref{case2}, then $\alpha = \phi + \psi$, for some $\phi, \psi \in \Delta$. Consequently, 
\begin{equation*}
    \eta(H_{\alpha}) = \eta(H_{\phi} + H_{\psi}) = \eta(H_{\phi}) + \eta(H_{\psi}) = 0,
\end{equation*}
as wanted.
\end{proof}

\begin{theorem}\label{P-K su reductive}
    Let $\g$ be a complex, non-abelian, reductive Lie algebra of complex dimension $n$, and $J$ a biinvariant complex structure on $\g$.
    Then, $(\g,J)$ admits a $(n-2)$-K\"ahler structure if and only if $\g$ has no simple factor of rank $1$.
\end{theorem}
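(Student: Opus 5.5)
The plan is to reduce to the Lie algebra criterion of \Cref{AB3.2}, in the version for unimodular Lie algebras with biinvariant complex structure (\Cref{remark di unimodularita}). Reductive Lie algebras are unimodular: the abelian part is trivially so, and semisimple algebras have traceless adjoint maps. So $(\g,J)$ is $(n-2)$-K\"ahler if and only if it has no non-zero, decomposable, exact, holomorphic $(2,0)$-forms. Via the isomorphism \eqref{k0isok}, exact $(2,0)$-forms correspond to exact complex $2$-forms $\Psi=d\eta$ on the complex Lie algebra $\g$. A $2$-form is decomposable if and only if $\Psi^2=0$, because a $2$-form of rank $r$ satisfies $\Psi^2=0$ exactly when $r\le 1$.

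Write $\g=\ab\oplus\g_s$. Since $\ab$ is central and $[\g,\g]=\g_s$, for a $1$-form $\eta$ we have $d\eta(v,w)=-\eta([v,w])$. This depends only on $\rest{\eta}{\g_s}$, and $d\eta$ is the pullback of $d_{\g_s}(\rest{\eta}{\g_s})$ under the projection $\g\to\g_s$. Hence exact $2$-forms on $\g$ are exactly the pullbacks of exact $2$-forms on $\g_s$, and the condition $\Psi^2=0$ is preserved under this correspondence.

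For the direction ``no rank-$1$ factor $\Rightarrow$ $(n-2)$-K\"ahler'', let $\Psi=d\eta$ be exact and decomposable on $\g$. It is the pullback of an exact $2$-form on $\g_s$ with vanishing square. \Cref{exactdec} then gives that this form is zero, so $\Psi=0$, and \Cref{AB3.2} yields the $(n-2)$-K\"ahler structure. Here $n-2\ge1$ holds automatically, because $\g$ is non-abelian with all simple factors of rank at least $2$, so $n\ge 8$.

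For the converse, suppose $\g_s$ has a simple factor $\sll(2,\C)$, with basis $H,X,Y$ satisfying $[H,X]=2X$, $[H,Y]=-2Y$, $[X,Y]=H$. Take $\eta$ to be the $1$-form dual to $X$, vanishing on all the other factors and on $\ab$. Then $d\eta=-\eta([\cdot,\cdot])$ is supported on the $\sll(2,\C)$ factor and equals $-2\,H^*\wedge X^*\neq0$. This form is decomposable and exact, and as a holomorphic $(2,0)$-form it obstructs $(n-2)$-K\"ahler structures by \Cref{AB3.2}, or directly by \Cref{86volte} after symmetrization. One still has to check the edge case where $n-2$ lies outside the range $1\le p\le n-1$ of \Cref{AB3.2}. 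For $\g=\sll(2,\C)$ we have $n=3$, so $n-2=1$ and the statement asks for K\"ahler, which \Cref{AB3.2} still covers. If $n\le 2$, then $\g$ cannot be non-abelian reductive.

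The main obstacle would be the vanishing statement for semisimple algebras, but this is already established in \Cref{exactdec}. What remains delicate is only bookkeeping: the identification of holomorphic $(2,0)$-forms with complex $2$-forms commuting with $d$ under \eqref{k0isok}, and the reduction from reductive $\g$ to $\g_s$.
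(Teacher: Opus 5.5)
Your proof is correct, but it takes a different route from the paper's.

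\textbf{The paper's route.} The paper first reduces to the semisimple part. If $\g$ is $(n-2)$-K\"ahler, then $\g_s\cong\g/\ab$ is $(n_s-2)$-K\"ahler by \Cref{enrico}, where $n_s=\dim_\C\g_s$. Conversely, an $(n_s-2)$-K\"ahler structure on $\g_s$ is combined with a K\"ahler form on $\ab$ and a balanced metric on $\g_s$, following \cite[Theorem 4.5]{AB}. For a rank-one factor, the paper applies \Cref{Lemma veramente molto importante con cui Asia si e portata a casa la giornata}, with the unimodular semisimple complement as the ideal. This pushes the structure down to a K\"ahler metric on $\sll_2\C$, which is impossible since semisimple Lie algebras are never K\"ahler. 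The other direction, via \Cref{AB3.2} and \Cref{exactdec}, is the same as yours.

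\textbf{Your route.} You apply the Lie-algebra version of \Cref{AB3.2} directly to the reductive (hence unimodular) algebra. You handle the center by noting that $d\eta$ is the pullback of $d_{\g_s}(\rest{\eta}{\g_s})$. You obstruct the rank-one case with the explicit exact decomposable form $dX^*=-2H^*\wedge X^*$.

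\textbf{Comparison.} Your argument is more economical and uniform, since it rests on a single criterion. It avoids the transversality result of \cite{FagMai} behind \Cref{enrico} and the semidirect-product lemma, the product construction of \cite{AB}, and the non-K\"ahlerness of semisimple algebras. It also shows explicitly that the rank hypothesis in \Cref{exactdec} cannot be dropped. The paper's route instead exercises general reduction tools (\Cref{enrico} and the semidirect-product lemma) that do not depend on biinvariance.

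\textbf{A minor correction.} At the Lie algebra level no symmetrization is involved. In the rank-one case you need either the unimodular analogue of \Cref{86volte} from \Cref{remark di unimodularita}, or simply the easy half of \Cref{AB3.2}.
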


\begin{proof}
Let $\g=\ab\oplus\g_s$ be the splitting of $\g$ as a direct product of its center $\ab$ and its semisimple part $\g_s$.
    Then, $J$ preserves both $\ab$ and $\g_s$, and we can denote with $n$ the complex dimension of $(\g,J)$, and with $n_s$ the complex dimension of $(\g_s,J)$.
    
We note that $\g$ is $(n-2)$-K\"ahler if and only if $\g_s$ is $(n_s-2)$-K\"ahler, allowing us to restrict our proof to the semisimple case.
Indeed, if $\g$ is $(n-2)$-K\"ahler, we obtain a $(n_s-2)$-K\"ahler structure on $\g_s$, by \Cref{enrico}.
Conversely, recall that $(\ab,J)$ admits K\"ahler metrics because it is abelian, and $(\g_s,J)$ admits balanced metrics because $J|_{\g_s}$ is biinvariant, namely it is $(n_s-1)$-K\"ahler.
Thus, if $(\g_s,J)$ is $(n_s-2)$-K\"ahler, we can argue as in the proof of \cite[Theorem 4.5]{AB}, to conclude that $\g$ is $(n-2)$-K\"ahler.

As per the discussion above, we can now restrict to the case where $\g$ is semisimple. If $\g$ has a simple factor of rank $1$, then this factor is isomorphic to $\sll_2\C$, and has dimension $3$. 
For the sake of contradiction, assume $\g$ has a $(n-2)$-K\"ahler structure $\Omega$. 
Note that the complement of $\sll_2\C$ in $\g$ is semisimple, thus unimodular, so we can apply Lemma \ref{Lemma veramente molto importante con cui Asia si e portata a casa la giornata}, to conclude that the factor $\sll_2\C$ admits a $(3-2)$-K\"ahler structure, namely a K\"ahler metric.
However, semisimple Lie algebras are never K\"ahler, giving a contradiction.

Conversely, assume that $\g$ has no simple factor of rank $1$, and let us prove that $(\g,J)$ admits a  $(n-2)$-K\"ahler structure.
By \Cref{AB3.2}, together with \eqref{k0isok}, this is equivalent to show that every exact, decomposable $2$-form on $\g$ is null.
Let $\Psi$ be such a $2$-form.
Since $\Psi$ is decomposable, then $\Psi^2=0$, so we are in the assumptions of \Cref{exactdec}, and we conclude that $\Psi=0$, proving the theorem.
\end{proof}

\begin{proposition}
    Let $\g$ be a complex, non-abelian, semisimple Lie algebra of complex dimension $n$, and $J$ a biinvariant complex structure on $\g$. Suppose that every simple factor has rank grater than 1.
    Then, $(\g,J)$ admits a $(n-2)$-K\"ahler structure $\Omega$ such that $[\Omega]_{BC} = 0$.
\end{proposition}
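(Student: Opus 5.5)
The plan is to build the $(n-2)$-K\"ahler structure inside a space of forms that are automatically $\partial\bar\partial$-exact, and to get positivity from a separation argument in the style of the proof of \Cref{AB3.2}. Throughout we work on $(\g,J)$ with invariant forms. Since $J$ is biinvariant, $[\g^{1,0},\g^{0,1}]=0$. Hence every invariant $(r,s)$-form is a sum of terms $\alpha\wedge\bar\beta$ with $\alpha\in\Lambda^{r,0}$ and $\beta\in\Lambda^{s,0}$, and
\begin{equation*}
\partial(\alpha\wedge\bar\beta)=d\alpha\wedge\bar\beta,\qquad \bar\partial(\alpha\wedge\bar\beta)=(-1)^r\alpha\wedge\overline{d\beta}.
\end{equation*}
Here $d$ on $\Lambda^{r,0}$ is identified via \eqref{k0isok} with the Chevalley--Eilenberg differential of the complex Lie algebra $\g$.

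\textbf{Step 1 (cohomological input).} Since $\g$ is semisimple, Whitehead's lemmas give $H^1(\g)=H^2(\g)=0$. Because $\g$ is unimodular, Poincar\'e duality for Lie algebra cohomology then gives $H^{n-1}(\g)=H^{n-2}(\g)=0$. Consequently every closed $(n-2,0)$-form is exact: if $\alpha\in\Lambda^{n-2,0}$ and $d\alpha=0$, then $\alpha=d a$ for some $a\in\Lambda^{n-3,0}$. Taking $\alpha=da$ and $\beta=db$ with $a,b\in\Lambda^{n-3,0}$, the formulas above give
\begin{equation*}
\alpha\wedge\bar\beta=da\wedge\overline{db}=\pm\,\partial\bar\partial(a\wedge\bar b).
\end{equation*}
So any real $(n-2,n-2)$-form in the span $Z\coloneqq \operatorname{span}\pg{\alpha\wedge\bar\beta : \alpha,\beta\in\ker d\cap\Lambda^{n-2,0}}$ is closed and has vanishing Bott--Chern class. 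The resulting form is invariant, so this also holds on any compact quotient.

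\textbf{Step 2 (positivity by separation).} It remains to find a transverse form in $Z_{\R}$, the real part of $Z$. Suppose none exists. Since the transverse cone is open, Hahn--Banach gives a nonzero weakly positive $(2,2)$-form $T$ annihilating $Z_\R$ under the wedge pairing into $\Lambda^{n,n}$. By unimodularity, the annihilator of $\ker d\cap\Lambda^{n-2,0}$ in $\Lambda^{2,0}$ under the wedge pairing is $B^{2,0}=d\Lambda^{1,0}$. Hence
\begin{equation*}
T\in B^{2,0}\wedge\Lambda^{0,2}+\Lambda^{2,0}\wedge\overline{B^{2,0}}.
\end{equation*}
Viewing $T$ as a Hermitian form on $\Lambda^{2,0}$, it is positive semidefinite on decomposable vectors, and it vanishes on pairs of vectors in the annihilator of $B^{2,0}$. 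I would now follow the argument of \cite[Theorem 3.2]{AB}: restricting $T$ to decomposable forms and using weak positivity, one should extract a nonzero decomposable element of $B^{2,0}$, that is, a nonzero exact decomposable $(2,0)$-form. By \eqref{k0isok}, this would be an exact $2$-form $\Psi$ on $\g$ with $\Psi^2=0$ and $\Psi\neq0$, contradicting \Cref{exactdec}.

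\textbf{Main obstacle.} The hardest step is the last one: showing that a weakly positive $T$ in this annihilator must come from a decomposable exact form. I expect to handle it by checking that the proof of \Cref{AB3.2} in \cite{AB} never needs $T$ to be an arbitrary closed current. It only needs $T$ to kill the pairing with \emph{all} closed $(\tilde p,0)\wedge\overline{(\tilde p,0)}$ forms. This is exactly our smaller test space, because by Step 1 these closed forms are all exact. So the dual obstruction is the same one used in \Cref{P-K su reductive}, and \Cref{exactdec} rules it out.
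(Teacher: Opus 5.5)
There is a genuine gap, and it is exactly the step you flag as the main obstacle. As you formulate it, that step is false. In this setting a nonzero weakly positive $T$ annihilating $Z_\R$ always exists. Take $0\neq\varphi\in\Lambda^{1,0}$ and set $\eta\coloneqq d\varphi$, which is nonzero because $\g=[\g,\g]$. Put $T\coloneqq i^{4}\,\eta\wedge\bar\eta$.

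Since $\eta\in B^{2,0}$, and by your own duality remark $B^{2,0}$ annihilates $\ker d\cap\Lambda^{n-2,0}$, you get $T\wedge\alpha\wedge\bar\beta=(\eta\wedge\alpha)\wedge\overline{(\eta\wedge\beta)}=0$ for all closed $\alpha,\beta$. So $T$ kills $Z$. Moreover $T\wedge i^{(n-2)^2}\alpha\wedge\bar\alpha=i^{n^2}(\eta\wedge\alpha)\wedge\overline{(\eta\wedge\alpha)}\ge0$ for every $\alpha\in\Lambda^{n-2,0}$, so $T$ is even positive. Yet by \Cref{exactdec}, $B^{2,0}$ contains no nonzero decomposable form. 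So no argument can extract a decomposable exact form from a weakly positive (or even positive) $T$ in that annihilator. Re-reading the proof of \Cref{AB3.2} cannot fix this either: the issue is not which test forms $T$ is paired with, but which positivity $T$ has.

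The missing idea is to identify the separating functional correctly. A functional that vanishes on $Z_\R$ and is positive on the open transverse cone is nonnegative on its closure, the cone of weakly positive $(n-2,n-2)$-forms. The dual of that cone under the wedge pairing is the cone of \emph{strongly} positive $(2,2)$-forms. Hence Hahn--Banach actually gives $T=\sum_k c_k\, i^4\eta_k\wedge\overline{\eta_k}$ with $c_k>0$ and each $\eta_k$ decomposable.

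Now pair $T$ with the real elements $i^{(n-2)^2}\alpha\wedge\bar\alpha\in Z_\R$, for $\alpha$ closed. This gives $0=\sum_k c_k\, i^{n^2}(\eta_k\wedge\alpha)\wedge\overline{(\eta_k\wedge\alpha)}$, a sum of nonnegative volume forms, so $\eta_k\wedge\alpha=0$ for all such $\alpha$. Thus each $\eta_k$ lies in $B^{2,0}$, is decomposable and exact, and vanishes by \Cref{exactdec}, contradicting $T\neq0$.

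With this correction your proof is complete, and it is the dual of the paper's argument. The paper avoids Hahn--Banach altogether. It takes $\Omega=i^{(n-2)^2}\sum_j\psi^j\wedge\overline{\psi^j}$ for a basis $\psi^j$ of the closed $(n-2,0)$-forms. It then checks directly that $\Omega\wedge i^4\eta\wedge\bar\eta=\sum_j i^{n^2}(\psi^j\wedge\eta)\wedge\overline{(\psi^j\wedge\eta)}$ vanishes only if $\eta\in B^{2,0}$, which for decomposable $\eta$ forces $\eta=0$. Your Step 1 (closedness and $\partial\bar\partial$-exactness via $H^{n-2}(\g)=0$) matches the paper's.
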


\begin{proof}
    To prove the proposition, we partially follow the proof of \Cref{AB3.2}. 
    Fix a basis of $1$-forms $\{\varphi^{1}, \dots, \varphi^{n}\}$ and consider $\Theta \coloneqq \varphi^{1} \wedge \dots \wedge \varphi^{n}$. We can define a non-degenerate bilinear form $F: \Lambda^{2}(\g^{\ast}) \times \Lambda^{n-2}(\g^{\ast})$ by 
    \begin{equation*}
        F(\eta,\psi) \Theta = \eta \wedge \psi, \quad \forall \eta \in \Lambda^{2}(\g^{\ast}), \forall \psi \in \Lambda^{n-2}(\g^{\ast}).
    \end{equation*}
    Let $W \coloneqq \langle d\varphi^{1}, \dots, d \varphi^{n} \rangle$ be the set of exact $2$-forms, and consider its orthogonal $W^{\perp}$ with respect to $F$. Note that $W^{\perp} \subseteq \Lambda^{n-2} (\g^{\ast})$ is the set of closed $(n-2)$-forms.

    As shown in the proof of \Cref{P-K su reductive}, combining \eqref{k0isok} with the fact that exact and decomposable $2$-forms on $\g$ must vanish (see \Cref{exactdec}), we obtain that $(\g,J)$ admits a $(n-2)$-K\"ahler structure. 
    Via the isomorphism \eqref{k0isok}, $W^{\perp}$ is mapped into the subspace $\tilde W^{\perp}$ of $d$-closed forms contained in $\Lambda^{n-2,0}(\mathfrak g^{\ast}_{\C})$. 
    Fix a basis $\{\psi^{1}, \dots, \psi^{l}\}$ of $\tilde W^{\perp}$, and define $\Omega \coloneqq i^{(n-2)^2}\sum_{j=1}^{l} \psi^{j} \wedge \overline{\psi^{j}}$. By construction $\Omega$ is real, closed and transverse. Moreover, since $H^{2}_{dR}(\g^{\ast}) = 0$ (see \cite[Theorem 21.1]{ChevEilen}) and $H^{n-2}_{dR}(\g^{\ast}) = 0$ by duality, it follows that every form in $W^{\perp}$ is exact. 
    In particular, for each $j=1, \dots, l$, there exist an $(n-3)$-form $\sigma^{j}$ such that $\psi^{j} = d \sigma^{j}$. 
    Furthermore, since $\psi^j$ is of type $(n-2,0)$, and $J$ is biinvariant, $\sigma^j$ is holomorphic, of type $(n-3,0)$, and $\psi^{j} = \partial \sigma^{j}$. 
    Hence, 
    \begin{equation*}
        \psi^{j} \wedge \overline{\psi^{j}} = \partial \sigma^{j} \wedge \overline{\partial \sigma^{j}} =\partial \overline{\partial} \pt{\sigma^{j} \wedge \overline{\sigma^{j}}},
    \end{equation*}
    where the last equality follows from the fact that $\sigma^{j}$ is holomorphic.
    Since each summand appearing in $\Omega$ is $\partial \overline{\partial}$-exact, we conclude that the entire form $\Omega$ is $\partial \overline{\partial}$-exact and so $[\Omega]_{BC}=0$.
\end{proof}

\smallskip
{\bf Acknowledgments.} 
The authors would like to thank Gueo Grantcharov and Elia Fusi for valuable conversations and comments. They are also grateful to Anna Fino for providing several comments that improved the clarity of the paper and to Hisashi Kasuya for his helpful remarks. The first author wishes to express his gratitude to Adriano Tomassini for his constant encouragement and support, and to Luis Ugarte for stimulating conversations. He also gratefully acknowledges the hospitality and support of the IMAR, Bucharest, and of the Institute of Mathematics and Informatics at the Bulgarian Academy of Sciences, Sofia, Bulgaria, where he was partially supported by G. Grantcharov's Simons grant (\#853269) through Florida International University, Miami, Florida USA.  
E. Lo Giudice was partly supported by the University of Parma through the action “Bando di Ateneo 2025 per la ricerca” and by GNSAGA of INdAM.
A. Mainenti was partly supported by the PNRR-III-C9-2023-I8 grant CF 149/31.07.2023 {\it Conformal Aspects of Geometry and Dynamics}.

\bibliographystyle{plain}
\bibliography{references}
\end{document}